\journal{Queueing Systems}
\pgfplotsset{compat=1.8}
\definecolor{clr1}{RGB}{27,158,119}
\definecolor{clr2}{RGB}{217,95,2}
\definecolor{clr3}{RGB}{117,112,179}
\definecolor{clr4}{RGB}{231,41,138}
\definecolor{clr5}{RGB}{102,166,30}
\definecolor{clr6}{RGB}{230,171,2}
\definecolor{clr7}{RGB}{166,118,29}
\pgfplotsset{
    cycle list={clr1,clr2,clr3,clr4,clr5,clr6,clr7},
}
\newtheorem{theorem}{Theorem}
\newtheorem{lemma}{Lemma}
\newtheorem{corollary}{Corollary}
\newtheorem{remark}{Remark}
\newtheorem{definition}{Definition}
\begin{document}

\begin{frontmatter}

\title{Fork-join and redundancy systems with heavy-tailed job sizes}

%% Group authors per affiliation:
\author[tue]{Youri Raaijmakers\corref{mycorrespondingauthor}}
%\affiliation{%
%  \institution{Eindhoven University of Technology}
%  \city{Eindhoven}
%  \country{The Netherlands}}
%\email{y.raaijmakers@tue.nl}
\cortext[mycorrespondingauthor]{Corresponding author}
\ead{y.raaijmakers@tue.nl}

\author[tue]{Sem Borst}
%\affiliation{%
%  \institution{Eindhoven University of Technology}
%  \city{Eindhoven}
%  \country{The Netherlands}}
%\email{s.c.borst@tue.nl}

\author[tue]{Onno Boxma}
%\affiliation{%
%  \institution{Eindhoven University of Technology}
%  \city{Eindhoven}
%  \country{The Netherlands}}
%\email{o.j.boxma@tue.nl}

\address[tue]{Eindhoven University of Technology\\Department of Mathematics and Computer Science}

\begin{abstract}
We investigate the tail asymptotics of the response time distribution for the cancel-on-start (c.o.s.)\ and cancel-on-completion (c.o.c.)\ variants of redundancy-$d$ scheduling and the fork-join model with heavy-tailed job sizes.
We present bounds, which only differ in the pre-factor, for the tail probability of the response time in the case of the first-come first-served (FCFS) discipline. 
For the c.o.s.\ variant we restrict ourselves to redundancy-$d$ scheduling, which is a special case of the fork-join model. In particular, for regularly varying job sizes with tail index~$-\nu$ the tail index of the response time for the c.o.s.\ variant of redundancy-$d$ equals $-\min\{d_{\mathrm{cap}}(\nu-1),\nu\}$, where $d_{\mathrm{cap}} = \min\{d,N-k\}$, $N$ is the number of servers and $k$ is the integer part of the load. 
This result indicates that for $d_{\mathrm{cap}} < \frac{\nu}{\nu-1}$ the waiting time component is dominant, whereas for $d_{\mathrm{cap}} > \frac{\nu}{\nu-1}$ the job size component is dominant. 
Thus, having $d = \lceil \min\{\frac{\nu}{\nu-1},N-k\} \rceil$ replicas is sufficient to achieve the optimal asymptotic tail behavior of the response time.
For the c.o.c.\ variant of the fork-join($n_{\mathrm{F}},n_{\mathrm{J}}$) model the tail index of the response time, under some assumptions on the load, equals $1-\nu$ and $1-(n_{\mathrm{F}}+1-n_{\mathrm{J}})\nu$, for identical and i.i.d.\ replicas, respectively; here the waiting time component is always dominant.
\end{abstract}

\begin{keyword}
Parallel-server systems, fork-join, redundancy, heavy-tailed distributions, response time asymptotics
\end{keyword}

\end{frontmatter}

\section{Introduction}
In recent years, the fork-join model has attracted strong interest. This model is a theoretical abstraction of the popular MapReduce framework~\cite{DG-MRLC}. MapReduce is a programming model for processing and generating big data sets with parallel algorithms on clusters. In MapReduce every job is divided into tasks which can be processed in parallel in any order. For completion of the job the completed tasks need to be joined together. 

\subsubsection*{Fork-join model}
In the fork-join($n_{\mathrm{F}},n_{\mathrm{J}}$) model tasks of a job are assigned to $n_{\mathrm{F}}$ servers selected uniformly at random. Redundant tasks are abandoned as soon as $n_{\mathrm{J}}$ of the $n_{\mathrm{F}}$ tasks either enter service (`cancel-on-start', c.o.s.)\ or finish service (`cancel-on-completion', c.o.c.). The job is completed when all these $n_{\mathrm{J}}$ tasks complete service.

Analytical results for the fork-join model are unfortunately scarce. Tight characterizations of the response time are only known in the special case of $n_{\mathrm{F}}=n_{\mathrm{J}}=2$, see~\cite{FH-TPQATD}. For a survey on results in other special cases we refer to~\cite{T-AFJ}. Results for the expectation of the response time are established when $n_{\mathrm{F}}=n_{\mathrm{J}} \rightarrow \infty$, see for example~\cite{BMS-FJQ,NT-AAFJ}. For a more detailed overview of the results and applications we refer to~\cite{H-OPQT}.

\subsubsection*{Redundancy scheduling}
Redundancy-$d$ scheduling is a special case within the fork-join model. 
In redundancy-$d$ scheduling replicas of a job are assigned to $d$ servers selected uniformly at random. Redundant replicas are abandoned as soon as one of the $d$ replicas either enters service or finishes service. Thus redundancy-$d$ scheduling is equivalent to the fork-join model with $n_{\mathrm{F}}=d$ and $n_{\mathrm{J}}=1$.
Observe that the c.o.s.\ variant of redundancy-$d$ is equivalent to the Join-the-Shortest-Workload-$d$ (JSW-$d$) policy, which assigns each job to the server with the smallest workload among $d$ servers selected uniformly at random, see~\cite{ABV-RCOSJSW}. The c.o.c.\ variant of redundancy-$d$ shares similarities with a strategy that assigns the job to the server that provides the minimum response time among $d$ servers selected uniformly at random, but involves possibly concurrent service of multiple replicas. 

It has been empirically shown that redundancy scheduling can improve performance in parallel-server systems~\cite{VGMSRS-LR}, especially in case of highly variable job sizes. More specifically, for large-scale applications such as Google search, the ability of redundancy scheduling to reduce the expectation and the tail of the response time has been demonstrated~\cite{DB-TAS}. 
Our understanding of redundancy scheduling is growing, and especially the stability condition for c.o.c.\ redundancy policies has received considerable attention, however, expressions for performance metrics such as the expectation or the distribution of the response time remain scarce. In~\cite{GZDHBHSW-RLR} analytical expressions for the expected response time are obtained for exponential job sizes and independent and identically distributed (i.i.d.)\ replicas. Under the assumption of asymptotic independence a fixed-point equation characterizing the response time distribution for identical and i.i.d.\ replicas is derived in~\cite{HH-PRR}. 

In this paper we examine the tail behavior of the response time when job sizes are heavy-tailed, which is one of the most relevant scenarios in redundancy scheduling and the fork-join model. Indeed, heavy tails in parallel processing are encountered in conjunction with the MapReduce framework developed at Google and its Hadoop open source implementation~\cite{DG-DPT}. Moreover, measurement studies show that workload characteristics such as file sizes, CPU times, and session lengths tend to be heavy-tailed, see~\cite{H-OPQT,PW-SSNTPE,Z-QSHT} and the references therein. The tail behavior of the waiting time distribution of the single server queue is well known, see for example~\cite{V-ABWHF} or~\cite[Chapter 2]{Z-QSHT}. Let $W_{\mathrm{FCFS}}$ denote the waiting time for the single-server queue with the FCFS discipline, for subexponential (see Definition~\ref{def: subexponential} in~\ref{app sec: preliminary results}) residual job sizes $B^{\mathrm{res}}$,
\begin{align}
\label{eq: tail behavior single server subexponential fcfs}
\mathbb{P}(W_{\mathrm{FCFS}} > x) \sim \frac{\tilde{\rho}}{1-\tilde{\rho}} \mathbb{P}(B^{\mathrm{res}}> x) ~~~ \text{as } x \rightarrow \infty,
\end{align}
where $\tilde{\rho} := \frac{\mathbb{E}[B]}{\mathbb{E}[A]}$ denotes the load with $A$ the interarrival time and $B$ the job size, and 
\begin{align*}
\mathbb{P}(B^{\mathrm{res}} > x) = \frac{1}{\mathbb{E}[B]} \int_{y=x}^{\infty} \mathbb{P}(B > y) \mathrm{d}y.
\end{align*}
In particular for regularly varying (see Definition~\ref{def: regularly varying} in~\ref{app sec: preliminary results}) job size distributions with index $-\nu$, i.e., $\mathbb{P}( B > x) =x^{-\nu} L(x)$ with $L(\cdot)$ a slowly varying function at infinity, 
\begin{align}
\label{eq: tail behavior single server fcfs}
\mathbb{P}(W_{\mathrm{FCFS}} > x) \sim \frac{\tilde{\rho}}{1-\tilde{\rho}} \frac{1}{(\nu - 1)\mathbb{E}[B]} L(x) x^{1-\nu}  ~~~ \text{as } x \rightarrow \infty.
\end{align}
One way to understand the tail index $1-\nu$ is the following. The workload (and waiting time) in an $M/G/1$ queue is distributed as a geometric($\tilde{\rho}$) sum of residual job sizes $B^{\mathrm{res}}$.
According to the theory of regular variation~\cite{BGT-RV}, loosely speaking, regular variation is preserved under integration, and asymptotically one can integrate as if $L(y)$ is kept outside the integral; so 
\begin{align}
\label{eq: tail behavior residual job sizes}
\mathbb{P}(B^{\mathrm{res}} > x) = \frac{1}{\mathbb{E}[B]} \int_{y=x}^{\infty} L(y) y^{-\nu} \mathrm{d}y \sim \frac{1}{(\nu - 1)\mathbb{E}[B]} L(x) x^{1-\nu} ~~~ \text{as } x \rightarrow \infty,
\end{align}
which implies that if $B$ is regularly varying with index $-\nu$, then $B^{\mathrm{res}}$ is regularly varying with index $1-\nu$.

The tail behavior in the single-server queue has also been studied for other service disciplines. For regularly varying job sizes, the random order of service (ROS) discipline has the same tail index as the FCFS discipline, but with a smaller pre-factor~\cite{BFLN-WTAROS}. For the last-come first-served with preemptive resume (LCFS-PR) discipline and the processor sharing (PS) discipline the tail index of the response time for regularly varying job sizes is the same as the tail index of the job size, see~\cite{Z-TABP} and~\cite{ZB-SAPS}, respectively. Thus, from a tail perspective, these service disciplines perform better than the FCFS discipline.  

Closer related to the c.o.s.\ variant of redundancy scheduling are the results for the tail behavior of the waiting time for the Join-the-Shortest-Workload (JSW) policy or equivalently the $GI/G/N$ queue, see~\cite{FK-HTMS} and~\cite{FK-LDMSHV}. The key idea in~\cite{FK-HTMS,FK-LDMSHV} to first consider deterministic interarrival times made the derivation of the tail behavior substantially more tractable.
In~\cite{FK-LDMSHV} it is shown that for long-tailed residual job sizes and $\tilde{\rho}>k$, where $k := \left\lfloor \tilde{\rho} \right\rfloor$ is the integer part of the load, 
\begin{align}
%\label{eq: thm lowerbound waiting time jsw fcfs}
\mathbb{P}(W_{\mathrm{JSW}} > x) \geq \frac{\tilde{\rho}^{N-k}+ o(1)}{(N-k)!} \mathbb{P} \left( B^{\mathrm{res}} > \frac{\tilde{\rho}+\delta}{\tilde{\rho}-k} x \right)^{N-k} ~~~ \text{as } x\rightarrow \infty,
\label{eq: lowerbound waiting time jsw fcfs}
\end{align}
for any $\delta>0$. 
For subexponential residual job sizes and $\tilde{\rho} < k+1$ it is shown that
\begin{align}
%\label{eq: thm upperbound waiting time jsw fcfs}
\mathbb{P}(W_{\mathrm{JSW}} > x) \leq {N \choose k} \left(\frac{(k+1)\tilde{\rho}}{(k+1)-\tilde{\rho}}+o(1)\right)^{N-k} \mathbb{P} \left( B^{\mathrm{res}}> x (1-\delta)  \right)^{N-k} ~~~ \text{as } x\rightarrow \infty.
\label{eq: upperbound waiting time jsw fcfs}
\end{align}
A heuristic explanation for the exponent $N-k$ is as follows. 
After the arrival of $N-k$ big jobs, $N-k$ servers will be working on these big jobs for a very long time. The other $k$ servers form an unstable $GI/G/k$ system, which implies that the workload drifts linearly to infinity. Thus eventually the workload at all $N$ servers will exceed level $x$, causing the waiting time of an arriving job to be larger than $x$.

In this paper we investigate the tail behavior of the response time for both the c.o.s.\ and c.o.c.\ variants of redundancy scheduling and the fork-join model when job sizes are heavy-tailed. Throughout the paper we assume that the system under consideration is in steady state. 
For regularly varying job sizes with tail index $-\nu$ and the FCFS discipline it is shown that the response time for the c.o.s.\ variant of redundancy-$d$ has tail index $-\min\{d_{\mathrm{cap}}(\nu-1),\nu\}$, where $d_{\mathrm{cap}} = \min\{d,N-k\}$ and $k = \lfloor \tilde{\rho} \rfloor$. For small loads, this result indicates that for $d < \frac{\nu}{\nu-1}$ the waiting time component is dominant, whereas for $d > \frac{\nu}{\nu-1}$ the job size component is dominant. 
Thus, having $d = \lceil \min\{\frac{\nu}{\nu-1}\} \rceil$ replicas already achieves the optimal asymptotic tail behavior of the response time and creating even more replicas yields no improvements in terms of response time tail asymptotics. 
For high loads, the results indicate that creating many replicas yields no benefits for the tail index of the response time. 
For the c.o.c.\ variant of the more general fork-join($n_{\mathrm{F}},n_{\mathrm{J}}$) model with identical and i.i.d.\ replicas the tail index of the response time is $1-\nu$ and $1-(n_{\mathrm{F}}+1-n_{\mathrm{J}})\nu$, respectively, and the waiting time component is always dominant. Note that in this case the tail index is independent of the load of the system and for identical replicas even independent of the number of replicas. In the special case of redundancy-$d$ scheduling with identical and i.i.d.\ replicas it follows that the tail index of the response time is $1-\nu$ and $1-d\nu$, respectively. All these results for the c.o.c. variant rely on the fact that the upper bound system, which is used in the proof, is stable. The stability condition of this system does not necessarily coincide with the stability condition of the original fork-join model. 

For the LCFS-PR discipline in the fork-join model we show that the response time tail is just as heavy as the job size tail, implying that for the c.o.c.\ variant this discipline achieves better tail asymptotics than the FCFS discipline. 
For the c.o.s.\ variant the LCFS-PR discipline has better tail asymptotics than the FCFS discipline for scenarios with low load and a small number of replicas; in all other scenarios both service disciplines have similar tail asymptotics. In~\cite{RBB-STBPS} it is shown that for the c.o.c.\ variant of redundancy-$d$ scheduling with the PS discipline the tail index of the response time is $-\nu$ for identical replicas and $-d\nu$ for i.i.d.\ replicas. Table~\ref{tab: overview tail index} provides an overview of the tail index for the various models and service disciplines. 

\begin{table}[]
\small
\centering
\caption{Overview of the tail index for the c.o.s.\ and c.o.c.\ varaint of redundancy scheduling with various service disciplines where the job size is regularly varying with tail index $-\nu$. The star indicates that for this scenario we obtained results for the more general fork-join($n_{\mathrm{F}},n_{\mathrm{J}}$) model.}
\label{tab: overview tail index}
\begin{tabular}{l||l|l|l|l|}
\multirow{2}{*}{} & \multicolumn{2}{c|}{c.o.s.} &  \multicolumn{2}{c|}{c.o.c.} \\ \cline{2-5}
 & $GI/G/N$ & Red-$d$ & Red-$d$ & Red-$d$ \\
 & (Red-$N$) & & (identical) & (i.i.d.) \\ \hline \hline
FCFS & $-\min\{(N-k)(\nu-1),\nu\}$~\cite{FK-HTMS,FK-LDMSHV} & $-\min\{d_{\mathrm{cap}}(\nu-1),\nu\}$ & $1-\nu$ $(*)$ & $1-d\nu$ $(*)$ \\ \hline
LCFS-PR &  & $-\nu$ & $-\nu$ $(*)$ & $-d\nu$ $(*)$ \\ \hline
PS &  & $-\nu$ & $-\nu$~\cite{RBB-STBPS} & $-d\nu$~\cite{RBB-STBPS}
\end{tabular}
\end{table}

The remainder of the paper is organized as follows. In Section~\ref{sec: model description} we provide a model description and state preliminary results. In Section~\ref{sec: fcfs} we characterize the tail behavior of the response time for the c.o.s.\ variant of redundancy scheduling and the c.o.c.\ variant of the more general fork-join model with the FCFS discipline, with some proofs deferred to the appendix. In Section~\ref{sec: lcfs} we discuss the tail behavior in the fork-join model with the LCFS-PR discipline. Section~\ref{sec: numerical results} provides numerical results on the tail behavior of the response time in redundancy scheduling with Pareto distributed job sizes. Section~\ref{sec: conclusion} contains conclusions and some suggestions for further research. The paper ends with two appendices. \ref{app sec: preliminary results} collects various definitions and results for heavy-tailed random variables, which will be used in the paper. \ref{app sec: proof of theorem cos fcfs} provides the proof for one of the main theorems in this paper. 

\section{Model description and preliminaries}
\label{sec: model description}
Consider a system of $N$ parallel unit-speed servers. Jobs arrive at the epochs of an arbitrary renewal process, with successive interarrival times $A_{i}$, $i \geq 1$, each distributed as a generic random variable $A$.  
When a job arrives, a dispatcher assigns replicas of the job to $n_{\mathrm{F}}$ servers chosen uniformly at random (without replacement), where $1 \leq n_{\mathrm{F}} \leq N$.
We consider two possible variants where redundant replicas are abandoned as soon as $n_{\mathrm{J}}$ of the $n_{\mathrm{F}}$ replicas either enter service (c.o.s.)\ or finish service (c.o.c.). If in the c.o.s.\ variant multiple replicas enter service at exactly the same time, then one of these replicas is chosen uniformly at random and starts service. A special case of the fork-join model is redundancy-$d$ scheduling, where $n_{\mathrm{F}}=d$ and $n_{\mathrm{J}}=1$. Note that in the c.o.s.\ variant of redundancy-$d$ the dependency structure between the replicas does not play a role, since at all times there is only one replica of the job in service.  
In contrast, in the c.o.c.\ variant of redundancy-$d$, and also in the fork-join model, several replicas of the same job may be in service at the same time, and hence the dependency structure does matter.
We thus allow the replica sizes $B_{1},\dots,B_{n_{\mathrm{F}}}$ of a job to be governed by some joint distribution function $F_{B}(b_{1},\dots,b_{n_{\mathrm{F}}})$, where $B_{i}$, $i=1,\dots,n_{\mathrm{F}}$, are each distributed as some random variable $B$, but not necessarily independent. Special cases of the dependency structure are: i) perfect dependency between the variables, so-called identical replicas, where the job size is preserved for all replicas, i.e., $B_{i}=B$, for all $i=1,\dots,n_{\mathrm{F}}$, ii)  no dependency at all, so-called i.i.d.\ replicas. 

Finally, let us denote the steady-state waiting times of the replicas at their $n_{\mathrm{F}}$ servers (the time until their service starts if they are still in the system) by $W_1,\dots,W_{n_{\mathrm{F}}}$ and the steady-state response time by $R$. Let $X_{(n_{\mathrm{J}})}$ denote the $n_{\mathrm{J}}$th order statistic of a set of random variables $X_{1},\dots,X_{N}$. 

\section{FCFS discipline}
\label{sec: fcfs}
In this section we analyze the tail asymptotics of the response time with the FCFS discipline. For the c.o.s.\ variant (Section~\ref{sec: cos fcfs}) we restrict ourselves to redundancy-$d$ scheduling, whereas for the c.o.c.\ variant (Section~\ref{sec: coc fcfs}) we allow for the more general fork-join model. 

\subsection{Cancel-on-start}
\label{sec: cos fcfs}
Observe that the steady-state response time in the c.o.s.\ variant of redundancy-$d$ is given by
\begin{align}
R= \min\{W_{1},\dots,W_{d}\} +  B.
\label{eq: response time cos fcfs}
\end{align}
We refer to the time between the arrival of a job and the moment the first replica goes into service as the waiting time $W_{\mathrm{min}} = \min\{W_{1},\dots,W_{d}\}$ of a job. 
As mentioned earlier, the c.o.s.\ variant of redundancy-$d$ is equivalent to the Join-the-Shortest-Workload-$d$ (JSW-$d$) policy, which assigns each job to the server with the smallest workload among $d$ servers selected uniformly at random. 

For general interarrival times and job sizes the stability condition for the system with the JSW-$d$ policy and FCFS is given by $\tilde{\rho} = \frac{\mathbb{E}[B]}{\mathbb{E}[A]} < N$, see~\cite{FC-OSPAMS}.

In~\cite[Theorem 1.6]{FK-LDMSHV} lower and upper bounds are derived for the tail probability of the waiting time for the JSW policy. The same methodology can be used to find lower and upper bounds for JSW-$d$, and hence for the c.o.s.\ variant of redundancy scheduling with $1 \leq d \leq N$ replicas, resulting in Theorem~\ref{thm: tail waiting time cos fcfs}. The two derived lower bounds in this theorem hold for every value of $\tilde{\rho}$, but they are asymptotically dominant for different regions of $\tilde{\rho}$, as explained after the theorem. Note that for $d=N$ Theorem~\ref{thm: tail waiting time cos fcfs} recovers the results of~\cite{FK-LDMSHV} as captured in~\eqref{eq: lowerbound waiting time jsw fcfs} and~\eqref{eq: upperbound waiting time jsw fcfs}, whereas for $d=1$ the system is equivalent to a $GI/G/1$ queue for which the tail behavior is given by~\eqref{eq: tail behavior single server fcfs}.

\begin{theorem}
\label{thm: tail waiting time cos fcfs}
Consider the c.o.s.\ variant of redundancy-$d$ scheduling with the FCFS discipline.
Let $k = \lfloor \tilde{\rho} \rfloor \in \{0,1,\dots,N-1\}$ be the integer part of the load and $\delta>0$.\\ 
i) If the residual job size $B^{\mathrm{res}}$ is long-tailed, then 
\begin{align}
\label{eq: thm lowerbound waiting time cos fcfs low load}
\mathbb{P}(W_{\mathrm{min}} > x) \geq \frac{1}{{N \choose d}} \frac{\tilde{\rho}^{d}+ o(1)}{d!} \left( \bar{B}^{\mathrm{res}} \left(\left(1 + \delta \right)x \right) \right)^{d}.
\end{align}
ii) If $\tilde{\rho} < N - d$ and the residual job size $B^{\mathrm{res}}$ is subexponential, then
\begin{align}
\label{eq: thm upperbound waiting time cos fcfs low load}
\mathbb{P}(W_{\mathrm{min}} > x) \leq {N \choose d} \left(\frac{(k+1)\tilde{\rho}}{k+1-\tilde{\rho}}+o(1)\right)^{d} \left( \bar{B}^{\mathrm{res}} \left( \frac{x (1-\delta)}{k+1} \right) \right)^{d}.
\end{align}
iii) If the residual job size $B^{\mathrm{res}}$ is long-tailed, then 
\begin{align}
\label{eq: thm lowerbound waiting time cos fcfs high load}
\mathbb{P}(W_{\mathrm{min}} > x) \geq \frac{\tilde{\rho}^{N-k}+ o(1)}{(N-k)!} \left( \bar{B}^{\mathrm{res}}\left(\frac{\tilde{\rho}+\delta}{\tilde{\rho}- k} x\right) \right)^{N-k}.
\end{align}
iv) If $\tilde{\rho} > N - d$ and the residual job size $B^{\mathrm{res}}$ is subexponential, then
\begin{align}
\label{eq: thm upperbound waiting time cos fcfs high load}
\mathbb{P}(W_{\mathrm{min}} > x) \leq {N \choose k} \left(\frac{(k+1)\tilde{\rho}}{k+1-\tilde{\rho}}+o(1)\right)^{N-k} \left( \bar{B}^{\mathrm{res}} \left( \frac{(k+1-N+d)x (1-\delta)}{k+1} \right) \right)^{N-k}.
\end{align}
\end{theorem}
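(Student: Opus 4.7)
The plan is to adapt the methodology developed by Foss and Korshunov~\cite{FK-LDMSHV} for the JSW policy (equivalently, the $GI/G/N$ queue) to the JSW-$d$ setting, exploiting the equivalence between the c.o.s.\ variant of redundancy-$d$ and JSW-$d$ already stated in the paper. The four bounds naturally pair up: (i) and (iii) are lower bounds proved by constructing explicit ``bad'' events, while (ii) and (iv) are upper bounds obtained by stochastic domination by a simpler auxiliary system. The case $d=N$ recovers~\eqref{eq: lowerbound waiting time jsw fcfs}--\eqref{eq: upperbound waiting time jsw fcfs} verbatim, so the objective is really to track the $d$-subset selection through the arguments of~\cite{FK-LDMSHV}.

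For the low-load lower bound (i), I would condition on the tagged job's $d$ selected servers being a fixed set, say $\{1,\ldots,d\}$, which contributes the factor $1/\binom{N}{d}$ by symmetry. On this event it suffices to build a scenario in which at each of these $d$ servers a ``big'' replica of a recent arrival is present, forcing each workload to exceed $x$. A convenient construction is to look backward in time and find $d$ earlier arrivals whose replica sizes exceed $(1+\delta)x$ and whose replica dispatching covers $\{1,\ldots,d\}$, one per server; the probability estimate then factorises into a product of $d$ essentially independent single-server tail contributions of order $\tilde{\rho}\,\bar B^{\mathrm{res}}((1+\delta)x)$, producing the $\tilde{\rho}^d/d!$ combinatorial factor (the $d!$ reflecting the unordered matching of big jobs to servers). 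For the high-load lower bound (iii), I would import the Foss--Korshunov high-load mechanism: require $N-k$ big arrivals of size at least $\frac{\tilde{\rho}+\delta}{\tilde{\rho}-k}x$ that collectively occupy $N-k$ fixed servers; once these blockers are in place, the effective per-server load on the remaining $k$ servers is $\tilde{\rho}/k>1$, so their workload drifts upward at rate $\tilde{\rho}-k$ per unit time, and during the lifetime of the blockers the workload at all $N$ servers exceeds $x$, so $W_{\min}$ does as well for \emph{any} $d$-subset chosen by the tagged job.

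For the upper bounds (ii) and (iv), I would first reduce via a symmetry/union bound of the form
\[
\mathbb{P}(W_{\min} > x) \;\le\; \binom{N}{d}\,\mathbb{P}(W_1 > x,\ldots, W_d > x)
\]
in the low-load regime and $\binom{N}{k}\,\mathbb{P}(W_1>x,\ldots,W_{N-k}>x)$ in the high-load regime (since $W_{\min}>x$ forces at least $\max\{d,N-k\}$ of the workloads to exceed $x$). For a fixed subset, I would then dominate the joint workload process by a fictitious FCFS system with $k+1$ servers, where the stability condition $\tilde{\rho}<k+1$ yields a geometric tail with factor $(k+1)\tilde{\rho}/(k+1-\tilde{\rho})$. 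Combining with the single-big-jump principle for subexponential $B^{\mathrm{res}}$ forces the rare event to arise from $d$ (resp.\ $N-k$) abnormally large jobs, each contributing a factor $\bar B^{\mathrm{res}}(x/(k+1))$ or $\bar B^{\mathrm{res}}((k+1-N+d)x/(k+1))$ after the appropriate rescaling accounting for how the big-job workload gets distributed over the $k+1$ servers in the auxiliary system.

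The main obstacle, in my view, is the upper bound rather than the lower bound: in JSW-$d$ the dispatcher sees only $d$ of the $N$ workloads, so the pathwise domination of the multi-server workload vector by the $(k+1)$-server reference system is more delicate than in JSW, where the dispatcher sees everything. The inductive argument on the number of big jumps must be carried out while tracking both the random $d$-subset selections and the workload evolution, and verifying that all error terms are genuinely $o(1)$ uniformly in $x$ is where the bulk of the technical work will lie; for that reason I would expect this part to be the one deferred to \ref{app sec: proof of theorem cos fcfs}.
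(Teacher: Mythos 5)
Your sketch of the lower bounds (i) and (iii) matches the paper's approach: condition on the tagged job sampling the $d$ servers with the largest workloads (factor $1/\binom{N}{d}$), bound below by $\mathbb{P}(V_{(1)}>x,\dots,V_{(d)}>x)$, and import the FK--LDMSHV Lemma~3.1 estimate for the low-load case, respectively bound below by $\mathbb{P}(V_1>x,\dots,V_N>x)$ and use their Theorem~5.1 mechanism for the high-load case. That part is fine.

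The upper bounds (ii) and (iv) are where there is a genuine gap. First, the parenthetical ``$W_{\min}>x$ forces at least $\max\{d,N-k\}$ of the workloads to exceed $x$'' is incorrect: $W_{\min}>x$ forces exactly the $d$ \emph{sampled} workloads to exceed $x$, no more, and in particular it does \emph{not} force $N-k$ of them to exceed $x$ when $d<N-k$. More importantly, your proposed reference system --- a fictitious $(k+1)$-server FCFS queue --- is not what the paper uses, and it is unclear how it would yield the specific argument scalings $x(1-\delta)/(k+1)$ in (ii) and $(k+1-N+d)x(1-\delta)/(k+1)$ in (iv), or how it would decouple the joint workload event $\{W_1>x,\dots,W_d>x\}$ at $d$ (correlated) servers. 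The paper instead compares, after reducing to deterministic interarrivals, to $N$ \emph{independent} auxiliary $D/G/1$ queues with slowed-down batch arrivals (interarrival $(k+1)(a'-h)$); the crucial pathwise bound is that for any $(k+1)$-subset $I$ one has $\sum_{i\in I}V_{ni}\le \sum_{i\in I}U_{ni}+\eta_I$ with $\eta_I$ light-tailed, combined with the purely combinatorial observation that $W_{\min,n}$ is at most the $(N-d+1)$-th smallest of $\{V_{ni}:i\in I\}$, so that $W_{\min,n}\le \tfrac{1}{k+1-N+d}\sum_{i\in I}V_{ni}$ when $k\ge N-d$. This yields $W_{\min,n}\le \tfrac{k+1}{k+1-N+d}U_{n,(k+1)}+\eta$ (case iv) and $W_{\min,n}\le (k+1)U_{n,(N-d+1)}+\eta$ (case ii); the exponents $N-k$ and $d$ then come from the number of independent $U_i$'s that must simultaneously exceed the rescaled level, and the prefactor $\tfrac{(k+1)\tilde\rho}{k+1-\tilde\rho}$ from the tail of the maximum of each auxiliary queue's random walk. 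Without this order-statistic bound and the $N$-fold independent auxiliary construction, your sketch does not produce the scaling factors and independence needed to close the argument, so that part needs to be replaced.
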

\begin{proof}
Let $\boldsymbol{V}=(V_{1},\dots,V_{N})$ denote the vector of residual workloads of the servers. Recall that $V_{(i)}$ denotes the $i$th order statistic of the set $V_{1},\dots,V_{N}$.
The proof of i) follows from the inequality
\begin{align*}
\mathbb{P}(W_{\mathrm{min}} > x) \geq \frac{1}{{N \choose d}} \mathbb{P}(V_{(1)} > x,\dots,V_{(d)} > x),
\end{align*}
with $\frac{1}{{N \choose d}}$ corresponding to the probability that the replicas of an arbitrary job are assigned to the servers with the $d$ largest workloads, and where 
\begin{align*}
\mathbb{P}(V_{(1)} > x,\dots,V_{(d)} > x) \geq \frac{\tilde{\rho}^{d}+ o(1)}{d!} \left( \bar{B}^{\mathrm{res}} \left( \left(1 + \delta \right) x \right) \right)^{d},
\end{align*}
by similar arguments as in the proof of Lemma~3.1 in~\cite{FK-LDMSHV}. 
The proof of iii) follows from the inequality
\begin{align*}
\mathbb{P}(W_{\mathrm{min}} > x) \geq \mathbb{P}(V_{1} > x,\dots,V_{N} > x),
\end{align*}
where 
\begin{align*}
\mathbb{P}(V_{1} > x,\dots,V_{N} > x) \geq \frac{\tilde{\rho}^{N-k}+ o(1)}{(N-k)!} \left( \bar{B}^{\mathrm{res}}\left(\frac{\tilde{\rho}+\delta}{\tilde{\rho}- k} x\right) \right)^{N-k},
\end{align*}
by similar arguments as in the proof of Theorem~5.1 in~\cite{FK-LDMSHV}.
The proof of ii) and iv) can be found in~\ref{app sec: proof of theorem cos fcfs}.
\end{proof}

As reflected in the proof sketches, the asymptotic lower bounds in~\eqref{eq: thm lowerbound waiting time cos fcfs low load} and~\eqref{eq: thm lowerbound waiting time cos fcfs high load} correspond to two different scenarios for a large value of $W_{\mathrm{min}}$ to occur. \\
Scenario~$1$ involves the arrival of $d$ jobs of size $x$ or larger `overlapping in time'.
In the JSW-$d$ system these jobs will be assigned to $d$ different servers with overwhelming probability for large $x$, and thus the workload at these $d$ servers will exceed $x$.
A newly arriving job that is so unfortunate as to sample exactly these $d$ servers (which happens with probability  $1/{N \choose d}$) will experience a waiting time larger than $x$.
Scenario~$2$ involves the arrival of $N-k$ sufficiently large jobs `overlapping in time', which instantaneously causes the workloads at $N-k$ servers to become large as described above, assuming $N - k \leq d$.
This will also result in subsequent jobs all being assigned to one of the other $k$~servers and hence create overload, so that the workloads at these servers will gradually start growing.
Thus, eventually the workloads at all servers will be large, and every arriving job will experience a large waiting time.
Observe that this scenario corresponds to that in the GI/G/N queue discussed in~\cite{FK-LDMSHV}, as illustrated by the match with Equation~\eqref{eq: lowerbound waiting time jsw fcfs}.

Scenarios~$1$ and~$2$ are asymptotically dominant in case $d \leq N-k$ and $d \geq N-k$, respectively, reflecting that a large waiting time is most likely due to a minimum number of $d_{\mbox{cap}} = \min\{d, N - k\}$ large jobs.
Note that in Scenario~$1$ the workloads at all servers will in fact grow large as well when $d \geq N - k$, but that Scenario~$2$ dominates in that case.

Scenarios with large workloads at $l$~servers, with $d < l < N$, do not asymptotically contribute to the probability of a large waiting time.
This may be intuitively explained by observing the following.
(i) If such scenarios involve strictly more than $d$ large workloads without resulting in overload of all servers (so $d < l < N-k$) then they are asymptotically much less likely
than Scenario 1.
(ii) If such scenarios involve $l \geq N-k$ large workloads, this will quickly result in overload of all servers, just like in Scenario 2.

\begin{corollary}[Analogous to Corollary 1.1 in~\cite{FK-LDMSHV}]
\label{col: tail waiting time dominated varying and long-tailed cos fcfs}
Let the residual job size $B^{\mathrm{res}}$ be long-tailed and dominated varying and $k<\tilde{\rho}<k+1$, i.e., $\tilde{\rho}$ not an integer value. Then there exist constants $c_{1}$ and $c_{2}$ such that, for all $x$,
\begin{align*}
c_{1} \left( \bar{B}^{\mathrm{res}}(x ) \right)^{d_{\mathrm{cap}}}  \leq \mathbb{P}(W_{\mathrm{min}} > x) \leq c_{2} \left( \bar{B}^{\mathrm{res}}(x ) \right)^{d_{\mathrm{cap}}},
\end{align*}
where $d_{\mathrm{cap}} = \min\{d,N-k\}$.
\end{corollary}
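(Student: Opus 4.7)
The plan is to build the two-sided estimate by pairing up appropriate lower and upper bounds from Theorem~\ref{thm: tail waiting time cos fcfs} according to how $d$ compares with $N-k$, and then to use the defining property of a dominated varying tail---namely $\bar{B}^{\mathrm{res}}(cx) \asymp \bar{B}^{\mathrm{res}}(x)$ for every fixed $c>0$---to absorb all the scaling prefactors of $x$ (such as $1+\delta$, $k+1$, $(\tilde{\rho}+\delta)/(\tilde{\rho}-k)$, or $(k+1-N+d)/(k+1)$) that sit inside $\bar{B}^{\mathrm{res}}(\cdot)$ into multiplicative constants. The long-tailedness hypothesis likewise absorbs the $o(1)$ corrections in the prefactors.

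Concretely, I would split into two regimes. First, when $d \leq N-k-1$, so that $d_{\mathrm{cap}}=d$, the assumption $k<\tilde{\rho}<k+1$ combined with $d \leq N-k-1$ gives $\tilde{\rho}<k+1\leq N-d$, and hence parts~i) and~ii) both apply and deliver matching bounds of order $(\bar{B}^{\mathrm{res}}(x))^{d}$. Second, when $d \geq N-k$, so that $d_{\mathrm{cap}}=N-k$, one has $N-d\leq k<\tilde{\rho}$, so parts~iii) and~iv) both apply and yield matching bounds of order $(\bar{B}^{\mathrm{res}}(x))^{N-k}$. Since $\tilde{\rho}$ is non-integer and $k$ is sharply its integer part, these two regimes partition all $d\in\{1,\dots,N\}$, and in each regime the exponent of $\bar{B}^{\mathrm{res}}(x)$ coming from the lower bound agrees with that of the upper bound, giving the claim with some $c_{1},c_{2}>0$.

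The only real obstacle is bookkeeping: one must verify in each regime that the load condition required by the selected parts of Theorem~\ref{thm: tail waiting time cos fcfs} is consistent with $k<\tilde{\rho}<k+1$, and confirm that the boundary case $d=N-k$ is correctly grouped with the high-load bucket (where part~iv) needs $\tilde{\rho}>N-d=k$, which is satisfied because $\tilde{\rho}$ is strictly non-integer). Beyond this, no further analytic input is needed: dominated variation collapses all $x$-scalings in the arguments of $\bar{B}^{\mathrm{res}}$ into constants, and long-tailedness collapses the $o(1)$ terms in the prefactors, leaving a clean two-sided bound of order $(\bar{B}^{\mathrm{res}}(x))^{d_{\mathrm{cap}}}$.
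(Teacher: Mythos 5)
Your proposal is correct and matches the paper's approach: the paper's proof is a one-liner that invokes Theorem~\ref{thm: tail waiting time cos fcfs}, the inclusion $\mathcal{L}\cap\mathcal{D}\subset\mathcal{S}$ (so that the subexponentiality hypotheses in parts ii) and iv) are met), and the definition of dominated variation to absorb the constant rescalings of $x$, and you have simply spelled out the bookkeeping---the case split $d\leq N-k-1$ versus $d\geq N-k$, the verification that the load conditions $\tilde{\rho}<N-d$ or $\tilde{\rho}>N-d$ follow from $k<\tilde{\rho}<k+1$, and the careful placement of $d=N-k$ in the second bucket---that the paper leaves implicit.
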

\begin{proof}
The result follows directly from Theorem~\ref{thm: tail waiting time cos fcfs}, the last inclusion in~\eqref{eq: relation tail classes} and the definition of dominated variation (Definition~\ref{def: dominated varying} in~\ref{app sec: preliminary results}). 
\end{proof}

\begin{remark}
Note that in Corollary~\ref{col: tail waiting time dominated varying and long-tailed cos fcfs} we exclude integer values for the load. Most of the heavy-tail results focus on the case where the load is not an integer, since the integer case is significantly more delicate to analyze. For a detailed study on the integer case in the $GI/G/2$ queueing system we refer to~\cite{BM-TADHL}.
\end{remark}

\begin{corollary}
\label{cor: tail waiting response time regularly varying cos fcfs}
For the c.o.s.\ variant of redundancy-$d$ scheduling with the FCFS discipline:
\begin{enumerate}[i)]
\item if $B \in RV(-\nu)$, then $W_{\mathrm{min}} \in ORV(d_{\mathrm{cap}}(1-\nu))$,
\item if $B \in RV(-\nu)$, then $R \in ORV(-\min\{d_{\mathrm{cap}}(\nu-1),\nu\})$. 
\end{enumerate}
\end{corollary}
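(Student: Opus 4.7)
The plan is to derive both statements directly from Theorem~\ref{thm: tail waiting time cos fcfs} together with the scaling property of regularly varying functions. First observe that, by Equation~\eqref{eq: tail behavior residual job sizes}, $B \in RV(-\nu)$ implies $B^{\mathrm{res}} \in RV(1-\nu)$, and regular variation is contained in both the long-tailed and subexponential classes. Hence all four inequalities of Theorem~\ref{thm: tail waiting time cos fcfs} apply.

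For part (i), I would split on whether $\tilde\rho < N - d$ or $\tilde\rho > N - d$ (the integer boundary case is handled by taking limits). In the former case one has $k < N - d$, so $d_{\mathrm{cap}} = d$, and \eqref{eq: thm lowerbound waiting time cos fcfs low load}--\eqref{eq: thm upperbound waiting time cos fcfs low load} bracket $\mathbb{P}(W_{\mathrm{min}} > x)$ between constant multiples of $\bar{B}^{\mathrm{res}}(c_1 x)^{d}$ and $\bar{B}^{\mathrm{res}}(c_2 x)^{d}$ for positive constants $c_1,c_2$. Because $\bar{B}^{\mathrm{res}}(cx)/\bar{B}^{\mathrm{res}}(x) \to c^{1-\nu}$ as $x\to\infty$, both bounds are asymptotic to constant multiples of $x^{d(1-\nu)} \tilde L(x)$ for a common slowly varying $\tilde L$, which gives $W_{\mathrm{min}} \in ORV(d(1-\nu)) = ORV(d_{\mathrm{cap}}(1-\nu))$. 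The case $\tilde\rho > N - d$ is symmetric: then $N - k \le d$ so $d_{\mathrm{cap}} = N-k$, and \eqref{eq: thm lowerbound waiting time cos fcfs high load}--\eqref{eq: thm upperbound waiting time cos fcfs high load} provide matching upper and lower bounds with the exponent $N-k$.

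For part (ii), note that by~\eqref{eq: response time cos fcfs} we have $R = W_{\mathrm{min}} + B$, and under the c.o.s.\ dispatching rule the arriving job's own size $B$ is independent of the workloads at the sampled servers, hence of $W_{\mathrm{min}}$. The standard two-sided bound for independent non-negative random variables,
\begin{equation*}
\max\{\mathbb{P}(W_{\mathrm{min}} > x),\mathbb{P}(B > x)\} \leq \mathbb{P}(R > x) \leq \mathbb{P}(W_{\mathrm{min}} > x/2) + \mathbb{P}(B > x/2),
\end{equation*}
combined with part~(i) and $B \in RV(-\nu)$, shows that $\mathbb{P}(R > x)$ is squeezed between two functions each behaving as $x^{\max\{d_{\mathrm{cap}}(1-\nu),-\nu\}}$ up to a slowly varying factor. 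Since $\max\{d_{\mathrm{cap}}(1-\nu),-\nu\} = -\min\{d_{\mathrm{cap}}(\nu-1),\nu\}$, we obtain $R \in ORV(-\min\{d_{\mathrm{cap}}(\nu-1),\nu\})$.

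The only delicate point is verifying that the exponents appearing in the upper and lower bounds of Theorem~\ref{thm: tail waiting time cos fcfs} coincide within each regime of $\tilde\rho$, which is precisely where the definition $d_{\mathrm{cap}} = \min\{d, N-k\}$ enters; once this is noted, the remainder is a routine manipulation using Uniform Convergence/scaling for regularly varying functions and the convolution tail bound above.
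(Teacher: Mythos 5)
Your proof is correct and follows essentially the same route as the paper's: derive two-sided bounds on $\mathbb{P}(W_{\mathrm{min}}>x)$ from Theorem~\ref{thm: tail waiting time cos fcfs} (the paper does this via Corollary~\ref{col: tail waiting time dominated varying and long-tailed cos fcfs} and Lemma~\ref{lem: bounds in o regularly varying}, which you reprove directly using the scaling property of $RV$), then pass from $W_{\mathrm{min}}$ to $R=W_{\mathrm{min}}+B$ (the paper invokes Lemma~\ref{lem: sum o regularly varying}, whereas you re-derive the needed two-sided convolution bound by hand). The only nuance worth flagging is that your elementary bounds $\max\{\mathbb{P}(W_{\mathrm{min}}>x),\mathbb{P}(B>x)\}\le\mathbb{P}(R>x)\le\mathbb{P}(W_{\mathrm{min}}>x/2)+\mathbb{P}(B>x/2)$ hold for any non-negative $W_{\mathrm{min}},B$ and do not actually require the independence you invoke (whereas the paper's Lemma~\ref{lem: sum o regularly varying} is stated under independence), so your version is marginally more robust; otherwise the two proofs are the same argument.
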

\begin{proof}
It is well known that if $B \in RV(-\nu)$, then $B^{\mathrm{res}} \in RV(1-\nu)$, see~\eqref{eq: tail behavior residual job sizes}. The proof of i) follows by applying this result to Corollary~\ref{col: tail waiting time dominated varying and long-tailed cos fcfs} together with the inclusion $RV \subset \mathcal{L} \cap \mathcal{D}$ from~\eqref{eq: relation tail classes} and Lemma~\ref{lem: bounds in o regularly varying} in~\ref{app sec: preliminary results}. 
The proof of ii) follows by i), Equation~\eqref{eq: response time cos fcfs} and Lemma~\ref{lem: sum o regularly varying} in~\ref{app sec: preliminary results}. 
\end{proof}

From Corollary~\ref{cor: tail waiting response time regularly varying cos fcfs} we conclude that the waiting time component is dominant in the response time tail as long as $d_{\mathrm{cap}} \leq \frac{\nu}{\nu-1}$, but otherwise the job size component is dominant. Better than that ($x^{-\nu}$ tail behavior) is, obviously, not possible for the response time. In other words, having more than $\frac{\nu}{\nu-1}$ replicas will not provide any improvement in the tail behavior. For example, consider a system with a sufficiently small load. If $\nu=4/3$, then $d=4$ already yields $R \in ORV(-\nu)$, and from a tail perspective choosing $d>4$ yields no benefits. If $\nu=3/2$, then it does not pay to take $d$ larger than $3$. If $\nu \geq 2$ (so $B$ has a finite second moment), then it does not pay to take $d$ larger than $2$. For high loads, the results indicate that creating many replicas yields no benefits for the tail index of the response time. 

\subsection{Cancel-on-completion}
\label{sec: coc fcfs}
In this section we analyze the tail behavior for the c.o.c.\ variant of the fork-join($n_{\mathrm{F}},n_{\mathrm{J}}$) model. The steady-state response time is given by
\begin{align}
R= (W+B)_{(n_{\mathrm{J}})}.
\label{eq: response time coc fcfs}
\end{align}
%Let us denote the minimum of the job sizes by $B_{\mathrm{min}}=\min\{B_{1},\dots,B_{d}\}$. Note that the replica that first goes into service does not necessarily complete first unless the replicas are identical.  

Next, we provide an upper and lower bound for the waiting time and response time via the workload in an auxiliary single-server queue, which is similar to~\cite[Lemma 1]{RBB-RSSB}.
\begin{lemma}
\label{lem: upperbound waiting time fcfs}
The waiting time $W_{(n_{\mathrm{J}})}$ and the response time $R$ in the c.o.c.\ variant of the fork-join($n_{\mathrm{F}},n_{\mathrm{J}}$) model with the FCFS discipline are stochastically bounded from above by the waiting time $W_{\mathrm{U}}$ and response time $R_{\mathrm{U}}$, respectively, in a $GI/G/1/FCFS$ queue with interarrival time $A$ and job size $B_{(n_{\mathrm{J}})}$.
\end{lemma}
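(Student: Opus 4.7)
The plan is a sample-path coupling between the fork-join system $F$ and the single-server queue $U$. I would run both systems on a common probability space driven by the same arrival epochs $\{T_n\}$, the same joint replica-size vectors $(B_{n,1},\dots,B_{n,n_{\mathrm{F}}})$, and the same random server selections $S_n$; $U$'s service requirement for the $n$th arrival will be $B_{(n_{\mathrm{J}}),n}$, so that its workload $V_U(t)$ satisfies the standard Lindley recursion $V_U(T_{n+1}^-)=(V_U(T_n^-)+B_{(n_{\mathrm{J}}),n}-A_{n+1})^+$ with $W_{U,n}=V_U(T_n^-)$.

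For each server $s$ in $F$, let $\tau_s(t)$ denote the time at which server $s$ becomes free after completing all replicas currently queued at it at time $t$, accounting for the c.o.c.\ cancellations that occur at the $n_{\mathrm{J}}$th completion of each job. The central step will be to establish by induction over the event sequence the pathwise invariant
\[
\tau_s(t) - t \;\le\; V_U(t) \qquad \text{for every server $s$ and every $t \ge 0$.}
\]
Granting this invariant, the waiting time $W^{(s)} = \tau_s(T_n^-)-T_n$ of the replica of job $n$ at any $s \in S_n$ satisfies $W^{(s)} \le W_{U,n}$; in particular $W_{(n_{\mathrm{J}})} \le \max_{s \in S_n} W^{(s)} \le W_{U,n}$. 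For the response time, I would take as candidate $n_{\mathrm{J}}$-subset of $S_n$ the $n_{\mathrm{J}}$ servers with the smallest replica sizes (each $\le B_{(n_{\mathrm{J}}),n}$); the maximum of $W^{(s)}+B_{n,s}$ over this subset dominates $R = (W+B)_{(n_{\mathrm{J}})}$, so the invariant yields $R \le W_{U,n}+B_{(n_{\mathrm{J}}),n} = R_{U,n}$.

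Between events both $\tau_s(t)-t$ and $V_U(t)$ decrease at the same unit rate while positive, so the invariant is trivially preserved. At an arrival $T_n$, $V_U$ jumps by $B_{(n_{\mathrm{J}}),n}$ while, for each $s \in S_n$, $\tau_s$ jumps by the effective service that $s$ renders to job $n$: this equals $B_{n,s}$ for the $n_{\mathrm{J}}$ replicas that complete before cancellation, and equals $R - W^{(s)}$ for the $n_{\mathrm{F}}-n_{\mathrm{J}}$ cancelled ones. In both cases the identity $\tau_s(T_n^+)-T_n \le R$ holds for every $s \in S_n$, because cancellation takes place precisely at the $n_{\mathrm{J}}$th completion of job $n$.

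The main obstacle will be that $B_{n,s}$ may far exceed $B_{(n_{\mathrm{J}}),n}$, so that the individual jump in $\tau_s$ at an arrival is not bounded by the jump $B_{(n_{\mathrm{J}}),n}$ in $V_U$; a na\"ive comparison of the increments therefore fails. The resolution is to carry out the induction jointly for the invariant and the response-time bound on job $n$: the response-time bound for job $n$ is deduced from the invariant at $T_n^-$ (via the smallest-$B$ subset argument of the previous paragraph), and then the identity $\tau_s(T_n^+)-T_n \le R \le R_{U,n} = V_U(T_n^+)$ closes the induction for the invariant across $T_n^+$. Completions in c.o.c.\ only bring server workloads down without affecting $V_U$, so they preserve the invariant automatically.
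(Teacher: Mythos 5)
Your proposal is essentially the paper's own argument: a sample-path coupling that establishes, by induction over arrivals, the workload-dominance invariant $\omega_s \le \omega_{\mathrm{U}}$ at every server, and then reads off $W_{(n_{\mathrm{J}})} \le W_{\mathrm{U}}$ and $R \le W_{\mathrm{U}} + B_{(n_{\mathrm{J}})} = R_{\mathrm{U}}$. One small slip to correct: the claim that $\tau_s(T_n^+) - T_n \le R$ for every $s \in S_n$ fails when $W^{(s)} > R$, i.e., when the replica at $s$ is cancelled before it even enters service; in that case the workload at $s$ is unchanged, $\tau_s(T_n^+) - T_n = W^{(s)}$, but the invariant still closes via $W^{(s)} \le V_U(T_n^-) \le V_U(T_n^+)$, so the correct post-arrival bound is $\tau_s(T_n^+) - T_n \le \max\{W^{(s)}, R\}$ (which is exactly the $\max$ the paper writes).
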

\begin{proof}
Consider an auxiliary system in which all jobs are assigned to the same $n_{\mathrm{F}}$ servers and where servers wait with serving a new job until all the $n_{\mathrm{J}}$ replicas are finished. This system is equivalent to the $GI/G/1/FCFS$ queue as defined in the lemma. Let $\omega_{i}$ be the workload at server $i$, where we define the workload as the amount of work a server needs to complete to become idle in the absence of any arrivals. By induction it can be shown that $\omega_{i}$ is bounded from above by the workload $\omega_{\mathrm{U}}$ in the auxiliary $GI/G/1/FCFS$ system at all times. Assume that $\omega_{(N)} \leq \omega_{\mathrm{U}}$ after the $m$-th arrival. Then, after the $(m+1)$-th arrival the new workload is 
\begin{align*}
\omega_{s^{m+1}_{l}} = \max\{(\omega_{\boldsymbol{s}^{m+1}} + b)_{(n_{\mathrm{J}})},\omega_{s_{l}}\} \leq \max\{ (\omega_{(N)} + b)_{(n_{\mathrm{J}})},\omega_{(N)}\} = \omega_{(N)} + b_{(n_{\mathrm{J}})},
\end{align*}
for $l=1,\dots,n$, since $\omega_{i} \leq \omega_{(N)}$ for all $i=1,\dots,N$.
Thus the increase in maximum workload is bounded by $b_{(n_{\mathrm{J}})}$, which is exactly the increase in workload in the corresponding $GI/G/1/FCFS$ queue.
Observe that the bound for the workload implies $W_{i} \leq W_{\mathrm{U}}$ for all $i=1,\dots,n_{\mathrm{F}}$, from which it follows that $W_{(n_{\mathrm{J}})} \leq W_{\mathrm{U}}$ and
\begin{align*}
R= (W+B)_{(n_{\mathrm{J}})} \leq W_{\mathrm{U}} + B_{(n_{\mathrm{J}})} = R_{\mathrm{U}}.
\end{align*}
\end{proof}

\begin{lemma}
\label{lem: lowerbound waiting time fcfs}
The waiting time $W_{(n_{\mathrm{J}})}$ and the response time $R$ in the c.o.c.\ variant of the fork-join($n_{\mathrm{F}},n_{\mathrm{J}}$) model with the FCFS discipline are stochastically bounded from below by the waiting time $W_{\mathrm{L}}$ and response time $R_{\mathrm{L}}$, respectively, in a $GI/G/1/FCFS$ queue with a random selection of the arrivals based on Bernoulli experiments with probability $1/K$, i.e., mean interarrival time $K \mathbb{E}[A]$, and with a job size $B_{(n_{\mathrm{J}})}$, where $K= {N \choose n_{\mathrm{F}}} \frac{n_{\mathrm{F}}!}{(n_{\mathrm{F}}-n_{\mathrm{J}}+1)!}$.
\end{lemma}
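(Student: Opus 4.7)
The plan is to exhibit a thinned and ``tagged'' sub-stream of arrivals which, after discarding the workload contributions of the other arrivals, evolves as the claimed auxiliary single-server queue.

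\textbf{Step 1 (Tagged thinning).} I would fix an arbitrary subset $S^* \subseteq \{1,\ldots,N\}$ of cardinality $n_{\mathrm{F}}$ together with an ordered tuple $(s^*_1,\ldots,s^*_{n_{\mathrm{J}}-1})$ of distinct elements of $S^*$. Call an arrival \emph{marked} if (i) its $n_{\mathrm{F}}$ replicas are dispatched to exactly the servers in $S^*$, and (ii) the first $n_{\mathrm{J}}-1$ of those replicas to complete service do so, in this order, at the servers $s^*_1,\ldots,s^*_{n_{\mathrm{J}}-1}$. Event (i) has probability $1/\binom{N}{n_{\mathrm{F}}}$ by the uniform random dispatch, and by the exchangeability of the replica sizes (in both the identical and the i.i.d.\ case) together with the stationary symmetry of the workloads at a uniformly chosen $n_{\mathrm{F}}$-subset, event (ii) has conditional probability $(n_{\mathrm{F}}-n_{\mathrm{J}}+1)!/n_{\mathrm{F}}!$. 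Since server assignments are independent across arrivals, the marked sub-stream is the Bernoulli($1/K$) thinning of the original arrival process, and has mean interarrival time $K\mathbb{E}[A]$.

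\textbf{Step 2 (Coupling to $GI/G/1$).} For a marked arrival the $n_{\mathrm{J}}$-th replica to complete is always the first one to finish among the servers in $S^* \setminus \{s^*_1,\ldots,s^*_{n_{\mathrm{J}}-1}\}$, and by exchangeability its contribution to the response time is distributed as the $n_{\mathrm{J}}$-th order statistic $B_{(n_{\mathrm{J}})}$. I would then construct the auxiliary $GI/G/1/FCFS$ queue whose arrivals occur exactly at the marked epochs and that carries service requirement $B_{(n_{\mathrm{J}})}$. Discarding the workload contributed by unmarked arrivals and by the ``late'' replicas of marked arrivals only decreases the workload at the relevant server, so in a sample-path monotone coupling the waiting time $W_{\mathrm{L}}$ in this auxiliary queue is stochastically dominated by the waiting time of the $n_{\mathrm{J}}$-th replica of a marked arrival in the original system. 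This yields $W_{(n_{\mathrm{J}})} \geq_{\mathrm{st}} W_{\mathrm{L}}$, and combining with the $n_{\mathrm{J}}$-th replica's service time (distributed as $B_{(n_{\mathrm{J}})}$) gives $R = (W+B)_{(n_{\mathrm{J}})} \geq_{\mathrm{st}} W_{\mathrm{L}} + B_{(n_{\mathrm{J}})} = R_{\mathrm{L}}$.

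\textbf{Main obstacle.} The hard part is making Step 2 fully rigorous: one needs a genuine sample-path monotone coupling rather than a marginal comparison, and one must verify that the effective service requirement in the auxiliary queue is indeed distributed as $B_{(n_{\mathrm{J}})}$. The exchangeability argument used to extract the factor $(n_{\mathrm{F}}-n_{\mathrm{J}}+1)!/n_{\mathrm{F}}!$ is especially delicate in the identical-replica case, since all replica sizes are then equal and the ordering of the first $n_{\mathrm{J}}-1$ completions is driven solely by the stationary workloads at the uniformly chosen $n_{\mathrm{F}}$ servers; one must appeal to the stationary exchangeability of these workloads to conclude that each of the $n_{\mathrm{F}}!/(n_{\mathrm{F}}-n_{\mathrm{J}}+1)!$ orderings is equally likely.
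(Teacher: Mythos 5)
Your Step 1 marks an arrival by an event that is determined only \emph{after} the replicas have started and finished service: you condition on the \emph{order of completions}, which depends on the stationary workloads at the sampled servers and hence on the entire history up to the arrival epoch. That event is therefore not a function of the arriving job's own attributes (server selection and replica sizes), which are the only quantities that are i.i.d.\ across arrivals and independent of the renewal stream. Consequently the marked arrivals do \emph{not} form a Bernoulli$(1/K)$ thinning of the original renewal process: even if the marginal probability of a mark is $1/K$ by stationarity and exchangeability, the marks are correlated across arrivals through the workloads, so the auxiliary queue does not have a $GI$ arrival stream with mean interarrival time $K\mathbb{E}[A]$. The circularity you flag as the ``main obstacle'' in the identical-replica case is in fact fatal in general: the event you mark on is the very object you want to analyze.

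The paper avoids this by conditioning on a \emph{static}, per-arrival event: the $n_{\mathrm{F}}$ servers chosen are a fixed set, and the smallest replica sizes are assigned in increasing order to the first few of those servers. This event depends only on the uniform server selection and on the ordering of the replica sizes of the arriving job, both independent across arrivals and independent of past workloads, so the kept arrivals form a genuine Bernoulli thinning, and by construction a fixed server carries a size distributed as $B_{(n_{\mathrm{J}})}$ at each kept arrival. The lower bound then follows from the same monotonicity argument as in Lemma~\ref{lem: upperbound waiting time fcfs} (deleting arrivals only decreases workloads), yielding $W_{i}\geq W_{\mathrm{L}}$ for every $i=1,\dots,n_{\mathrm{F}}$, which in turn gives both $W_{(n_{\mathrm{J}})}\geq W_{\mathrm{L}}$ and $(W+B)_{(n_{\mathrm{J}})}\geq W_{\mathrm{L}}+B_{(n_{\mathrm{J}})}$. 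A secondary gap in your write-up: you only obtain a comparison at the \emph{marked} epochs, whereas the lemma is a statement about $W_{(n_{\mathrm{J}})}$ and $R$ for a generic arrival; the paper's route (workload domination at every server at all times) gives the result for all arrivals directly. If you replace your dynamic marking event by the paper's static size-ordering condition, the rest of your argument goes through along the intended lines.
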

\begin{proof}
Consider an auxiliary system in which we only allow arrivals to $n_{\mathrm{F}}$ specific servers, say $1$ up and until $n_{\mathrm{F}}$, and for which the first $n_{\mathrm{J}}$ job sizes are the smallest and in increasing order. Thus, the smallest job is assigned to server $1$, the second smallest job to server $2$, etcetera up and until server $n_{\mathrm{J}}$. Note that the other arrivals are only deleted in the auxiliary system and not in the original system, thus the amount of work cannot be lower. The auxiliary system is equivalent to the $GI/G/1/FCFS$ queue as defined in the lemma, since server $n_{\mathrm{J}}$ always finishes the $n_{\mathrm{J}}$-th replica last. Similarly to Lemma~\ref{lem: upperbound waiting time fcfs}, it can be shown that the workload $\omega_{i}$ at server $i$ in the original system, is bounded from below by the workload $\omega_{\mathrm{L}}$ in the auxiliary $GI/G/1/FCFS$ system. Observe that the bound for the workload implies $W_{i} \geq W_{\mathrm{L}}$ for all $i=1,\dots,n_{\mathrm{F}}$, from which it follows that $W_{(n_{\mathrm{J}})} \geq W_{\mathrm{L}}$ and
\begin{align*}
R= (W+B)_{(n_{\mathrm{J}})} \geq W_{\mathrm{L}} + B_{(n_{\mathrm{J}})} = R_{\mathrm{L}}.
\end{align*}
\end{proof}

A sufficient stability condition for general interarrival times and job sizes is $\rho_{\mathrm{U}} := \frac{\mathbb{E}[B_{(n_{\mathrm{J}})}]}{\mathbb{E}[A]} < 1$, which can be proved via the upper bound system given in Lemma~\ref{lem: upperbound waiting time fcfs}. The exact stability condition for the c.o.c.\ variant of the fork-join($n_{\mathrm{F}},n_{\mathrm{J}}$) model, and also redundancy-$d$ scheduling, with the FCFS discipline in such a general setting is still unknown. 

\begin{theorem}
\label{thm: tail waiting time subexponential fcfs}
If $\rho_{\mathrm{U}} < 1$ and the residual job size $B_{(n_{\mathrm{J}})}^{\mathrm{res}}$ is subexponential, then for the c.o.c.\ variant of the fork-join($n_{\mathrm{F}},n_{\mathrm{J}}$) model scheduling with the FCFS discipline:
\begin{align*}
\frac{\rho_{\mathrm{L}}}{1-\rho_{\mathrm{L}}} \bar{B}_{(n_{\mathrm{J}})}^{\mathrm{res}}(x) \leq \mathbb{P}(W_{(n_{\mathrm{J}})} > x) \leq \frac{\rho_{\mathrm{U}}}{1-\rho_{\mathrm{U}}} \bar{B}_{(n_{\mathrm{J}})}^{\mathrm{res}}(x) ~~~ \text{as } x \rightarrow \infty,
\end{align*}
where $\rho_{\mathrm{L}} =\frac{\mathbb{E}[B_{(n_{\mathrm{J}})}]}{K \mathbb{E}[A]}$ with $K={N \choose d}\frac{n_{\mathrm{F}}!}{(n_{\mathrm{F}}-n_{\mathrm{J}}+1)!}$ and $\rho_{\mathrm{U}} =\frac{\mathbb{E}[B_{(n_{\mathrm{J}})}]}{\mathbb{E}[A]}$.
\end{theorem}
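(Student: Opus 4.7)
The plan is to combine the stochastic sandwich already supplied by Lemmas~\ref{lem: upperbound waiting time fcfs} and~\ref{lem: lowerbound waiting time fcfs} with the classical single-server subexponential tail asymptotic~\eqref{eq: tail behavior single server subexponential fcfs}. Lemma~\ref{lem: upperbound waiting time fcfs} gives $W_{(n_{\mathrm{J}})} \leq_{\mathrm{st}} W_{\mathrm{U}}$, the waiting time in a $GI/G/1/FCFS$ queue with interarrival time $A$ and job size $B_{(n_{\mathrm{J}})}$, whose load is exactly $\rho_{\mathrm{U}} < 1$. Lemma~\ref{lem: lowerbound waiting time fcfs} gives $W_{(n_{\mathrm{J}})} \geq_{\mathrm{st}} W_{\mathrm{L}}$, the waiting time in a $GI/G/1/FCFS$ queue whose arrivals are obtained by Bernoulli($1/K$) thinning of the original renewal stream, and whose job size is again $B_{(n_{\mathrm{J}})}$; its load equals $\rho_{\mathrm{L}} = \rho_{\mathrm{U}}/K < 1$, so this auxiliary queue is automatically stable.

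Next I would verify that both auxiliary queues fall under the hypotheses of~\eqref{eq: tail behavior single server subexponential fcfs}. For the upper bound queue this is immediate: the interarrival distribution is $A$ and the job size is $B_{(n_{\mathrm{J}})}$ whose residual is subexponential by assumption, giving
\begin{align*}
\mathbb{P}(W_{\mathrm{U}} > x) \sim \frac{\rho_{\mathrm{U}}}{1-\rho_{\mathrm{U}}} \bar{B}_{(n_{\mathrm{J}})}^{\mathrm{res}}(x), \qquad x \to \infty.
\end{align*}
For the lower bound queue, Bernoulli thinning of a renewal process yields a new renewal process whose generic interarrival time is a geometric sum of i.i.d.\ copies of $A$ with mean $K\mathbb{E}[A]$; since~\eqref{eq: tail behavior single server subexponential fcfs} only needs a finite interarrival mean and subexponential residual job size, it applies verbatim and produces
\begin{align*}
\mathbb{P}(W_{\mathrm{L}} > x) \sim \frac{\rho_{\mathrm{L}}}{1-\rho_{\mathrm{L}}} \bar{B}_{(n_{\mathrm{J}})}^{\mathrm{res}}(x), \qquad x \to \infty.
\end{align*}
Combining the two asymptotics with the stochastic sandwich $W_{\mathrm{L}} \leq_{\mathrm{st}} W_{(n_{\mathrm{J}})} \leq_{\mathrm{st}} W_{\mathrm{U}}$ and passing to $\liminf$ and $\limsup$ of $\mathbb{P}(W_{(n_{\mathrm{J}})} > x)/\bar{B}_{(n_{\mathrm{J}})}^{\mathrm{res}}(x)$ yields the claimed bounds.

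There is no real obstacle here once Lemmas~\ref{lem: upperbound waiting time fcfs} and~\ref{lem: lowerbound waiting time fcfs} are available; the only point that deserves explicit justification is the mild check that the thinned arrival process in the lower-bound queue still qualifies as a $GI/G/1$ input with finite mean interarrival time, so that~\eqref{eq: tail behavior single server subexponential fcfs} can be invoked on it. The inequalities in the theorem statement should then be read in the asymptotic sense ($\liminf$ and $\limsup$ of the ratios), which is exactly what the sandwich delivers.
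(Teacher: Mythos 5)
Your argument is exactly the one the paper uses: sandwich $W_{(n_{\mathrm{J}})}$ between $W_{\mathrm{L}}$ and $W_{\mathrm{U}}$ via Lemmas~\ref{lem: upperbound waiting time fcfs} and~\ref{lem: lowerbound waiting time fcfs}, then apply the subexponential single-server asymptotic~\eqref{eq: tail behavior single server subexponential fcfs} to each auxiliary $GI/G/1$ queue. The extra remarks about Bernoulli thinning preserving the renewal structure and about $\rho_{\mathrm{L}} < \rho_{\mathrm{U}} < 1$ guaranteeing stability of the lower-bound queue are correct and slightly more explicit than the paper, but do not change the route.
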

\begin{proof}
\textit{Upper bound:}
By Lemma~\ref{lem: upperbound waiting time fcfs} the waiting time of a job is bounded from above by the waiting time $W_{\mathrm{U}}$ in a $GI/G/1/FCFS$ queue with interarrival time $A$ and job size $B_{(n_{\mathrm{J}})}$.
Thus, by the subexponentiality of $B_{(n_{\mathrm{J}})}^{\mathrm{res}}$, we can apply known results for the single-server queue, see~\eqref{eq: tail behavior single server subexponential fcfs}, and obtain
\begin{align}
\label{eq: upperbound waiting time probability}
\mathbb{P}(W_{\mathrm{U}} > x) \sim \frac{\rho_{\mathrm{U}}}{1-\rho_{\mathrm{U}}} \bar{B}_{(n_{\mathrm{J}})}^{\mathrm{res}}(x) ~~~ \text{as } x \rightarrow \infty.
\end{align}
\textit{Lower bound:}
By Lemma~\ref{lem: lowerbound waiting time fcfs} the waiting time of a job is bounded from below by the waiting time $W_{\mathrm{L}}$ in a $GI/G/1/FCFS$ queue with a random selection of the arrivals based on Bernoulli experiments with probability $1/K$, i.e., mean interarrival time $K \mathbb{E}[A]$, and job size $B_{(n_{\mathrm{J}})}$. Again, by the subexponentiality of $B_{(n_{\mathrm{J}})}^{\mathrm{res}}$, by applying known results for the single-server queue we obtain
\begin{align}
\label{eq: lowerbound waiting time probability}
\mathbb{P}(W_{\mathrm{L}} > x) \sim \frac{\rho_{\mathrm{L}}}{1-\rho_{\mathrm{L}}} \bar{B}_{(n_{\mathrm{J}})}^{\mathrm{res}}(x) ~~~ \text{as } x \rightarrow \infty.
\end{align}
By combining~\eqref{eq: upperbound waiting time probability} and~\eqref{eq: lowerbound waiting time probability} we get the desired statement. 
\end{proof}

The next corollary provides insight in the tail behavior when the distribution of the $n_{\mathrm{J}}$th order statistic of the job size is regularly varying, i.e., $B_{(n_{\mathrm{J}})} \in RV(-\tilde{\nu})$. Observe that, in the special case of identical replicas $B_{(n_{\mathrm{J}})} \in RV(-\nu)$ when $B \in RV(-\nu)$, thus in this case $\tilde{\nu} = \nu$, whereas for i.i.d.\ replicas $B_{(n_{\mathrm{J}})} \in RV(-(n_{\mathrm{F}}+1-n_{\mathrm{J}})\nu)$ when $B \in RV(-\nu)$ (see~\cite{JM-RVF}), thus in this case $\tilde{\nu} = (n_{\mathrm{F}}+1-n_{\mathrm{J}})\nu$.

\begin{corollary}
\label{cor: tail waiting time regularly varying fcfs}
For the c.o.c.\ variant of the fork-join($n_{\mathrm{F}},n_{\mathrm{J}}$) model with the FCFS discipline and $\rho_{\mathrm{U}} < 1$:
\begin{enumerate}[i)]
\item if $B_{(n_{\mathrm{J}})} \in RV(-\tilde{\nu})$, then $W_{(n_{\mathrm{J}})} \in ORV(1-\tilde{\nu})$,
\item if $B_{(n_{\mathrm{J}})} \in RV(-\tilde{\nu})$, then $R \in ORV(1-\tilde{\nu})$.
\end{enumerate}
\end{corollary}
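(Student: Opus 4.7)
The plan is to reduce both statements to Theorem~\ref{thm: tail waiting time subexponential fcfs}, combined with Karamata-style manipulation of residual tails and the closure lemmas for $ORV$ collected in~\ref{app sec: preliminary results}.

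For part (i), I would first observe that if $B_{(n_{\mathrm{J}})} \in RV(-\tilde{\nu})$, then $B_{(n_{\mathrm{J}})}^{\mathrm{res}} \in RV(1-\tilde{\nu})$ by exactly the same integration argument as in~\eqref{eq: tail behavior residual job sizes}. In particular $B_{(n_{\mathrm{J}})}^{\mathrm{res}}$ is subexponential via the chain of inclusions $RV \subset \mathcal{L} \cap \mathcal{D} \subset \mathcal{S}$ in~\eqref{eq: relation tail classes}, so Theorem~\ref{thm: tail waiting time subexponential fcfs} applies and delivers constants $0 < c_1 \leq c_2$ (namely $\rho_{\mathrm{L}}/(1-\rho_{\mathrm{L}})$ and $\rho_{\mathrm{U}}/(1-\rho_{\mathrm{U}})$) such that $c_1 \bar{B}_{(n_{\mathrm{J}})}^{\mathrm{res}}(x) \lesssim \mathbb{P}(W_{(n_{\mathrm{J}})} > x) \lesssim c_2 \bar{B}_{(n_{\mathrm{J}})}^{\mathrm{res}}(x)$. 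Feeding this two-sided asymptotic sandwich, whose comparator lies in $RV(1-\tilde{\nu})$, into Lemma~\ref{lem: bounds in o regularly varying} from~\ref{app sec: preliminary results} then gives $W_{(n_{\mathrm{J}})} \in ORV(1-\tilde{\nu})$, which is exactly (i). This mirrors step-for-step the argument already used in the proof of Corollary~\ref{cor: tail waiting response time regularly varying cos fcfs}(i).

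For part (ii), I would sandwich $R$ between quantities with matching $ORV$ index. For the lower bound, since $B_i \geq 0$, coordinatewise addition preserves order statistics, so $R = (W+B)_{(n_{\mathrm{J}})} \geq W_{(n_{\mathrm{J}})}$ and hence $\mathbb{P}(R > x) \geq \mathbb{P}(W_{(n_{\mathrm{J}})} > x)$, which by (i) already supplies the $ORV(1-\tilde{\nu})$ lower side. For the upper bound, Lemma~\ref{lem: upperbound waiting time fcfs} gives $R \leq R_{\mathrm{U}} = W_{\mathrm{U}} + B_{(n_{\mathrm{J}})}$, where $W_{\mathrm{U}}$ is the waiting time in a $GI/G/1/FCFS$ queue with job size $B_{(n_{\mathrm{J}})}$; by~\eqref{eq: tail behavior single server fcfs}, $W_{\mathrm{U}} \in RV(1-\tilde{\nu}) \subset ORV(1-\tilde{\nu})$, while $B_{(n_{\mathrm{J}})} \in RV(-\tilde{\nu}) \subset ORV(-\tilde{\nu})$. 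Because $-\tilde{\nu} < 1-\tilde{\nu}$, the waiting-time tail strictly dominates the job-size tail, so Lemma~\ref{lem: sum o regularly varying} from~\ref{app sec: preliminary results} yields $W_{\mathrm{U}} + B_{(n_{\mathrm{J}})} \in ORV(1-\tilde{\nu})$. Combining the two sides produces (ii).

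No real technical obstacle is expected: the only mildly subtle point is the pointwise order-statistic inequality $R \geq W_{(n_{\mathrm{J}})}$, which follows in one line from the fact that if $x_i \leq y_i$ for all $i$ then $x_{(j)} \leq y_{(j)}$ for each $j$, applied with $x_i = W_i$ and $y_i = W_i + B_i$. Everything else is bookkeeping that invokes Theorem~\ref{thm: tail waiting time subexponential fcfs} and the appendix closure lemmas in the same spirit as the proof of Corollary~\ref{cor: tail waiting response time regularly varying cos fcfs}.
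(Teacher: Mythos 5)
Your argument is essentially correct, and the structure of part (i) is identical to the paper's. For part (ii) you deviate in two small ways that are worth flagging.

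First, your lower bound. The paper bounds $\mathbb{P}(R>x)$ from below via the auxiliary system of Lemma~\ref{lem: lowerbound waiting time fcfs}, writing $R \geq R_{\mathrm{L}} = W_{\mathrm{L}}+B_{(n_{\mathrm{J}})}$ and then computing the exact regularly varying asymptotics of $\mathbb{P}(R_{\mathrm{L}}>x)$ via Lemma~\ref{lem: sum minimum regularly varying}. You instead use the pointwise order-statistic monotonicity $R=(W+B)_{(n_{\mathrm{J}})}\geq W_{(n_{\mathrm{J}})}$ and appeal to part (i). That is a valid, arguably cleaner inequality; both routes deliver a lower bound of the form $c_{1}L(x)x^{1-\tilde{\nu}}$, in your case coming from the lower bound in Theorem~\ref{thm: tail waiting time subexponential fcfs} feeding into $W_{(n_{\mathrm{J}})}$.

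Second, and this is the point that deserves more care: on the upper side you invoke Lemma~\ref{lem: sum o regularly varying} (the $ORV$ closure) to conclude $W_{\mathrm{U}}+B_{(n_{\mathrm{J}})}\in ORV(1-\tilde{\nu})$, and then claim that ``combining the two sides'' gives $R\in ORV(1-\tilde{\nu})$. Sandwiching between two distributions that each lie in $ORV(1-\tilde{\nu})$ does \emph{not} by itself imply that the middle one lies in $ORV(1-\tilde{\nu})$: one can construct $X_{1}\leq_{\mathrm{st}}R\leq_{\mathrm{st}}X_{2}$ with $X_{1},X_{2}\in ORV(-\nu)$ but $\bar{F}_{X_{1}}(x)/\bar{F}_{X_{2}}(x)\to 0$, so that $\bar{F}_{R}$ can oscillate and violate the $ORV$ condition. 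What you actually need, and what Lemma~\ref{lem: bounds in o regularly varying} demands, is a common regularly varying comparator: $c_{1}L(x)x^{1-\tilde{\nu}}\leq\mathbb{P}(R>x)\leq c_{2}L(x)x^{1-\tilde{\nu}}$ with the \emph{same} slowly varying $L$. You have this available, since $W_{\mathrm{U}}$ and $B_{(n_{\mathrm{J}})}$ are genuinely $RV$ (not just $ORV$) and independent, so Lemma~\ref{lem: sum minimum regularly varying} gives the exact asymptotic $\mathbb{P}(W_{\mathrm{U}}+B_{(n_{\mathrm{J}})}>x)\sim\frac{\rho_{\mathrm{U}}}{1-\rho_{\mathrm{U}}}\frac{L(x)x^{1-\tilde{\nu}}}{(\tilde{\nu}-1)\mathbb{E}[B_{(n_{\mathrm{J}})}]}$, matching the paper. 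Replacing Lemma~\ref{lem: sum o regularly varying} by Lemma~\ref{lem: sum minimum regularly varying}, and then explicitly invoking Lemma~\ref{lem: bounds in o regularly varying}, makes the final step airtight. With that fix, your proof is correct and differs from the paper only in the choice of lower-bounding inequality, which is a harmless and slightly more elementary variation.
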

\begin{proof}
For regularly varying residual job sizes we know that
\begin{align*}
\mathbb{P}(B_{(n_{\mathrm{J}})}^{\mathrm{res}} > x) \sim \frac{1}{(\tilde{\nu} - 1)\mathbb{E}[B_{(n_{\mathrm{J}})}]} L(x) x^{1-\tilde{\nu}} ~~~ \text{as } x \rightarrow \infty,
\end{align*}
see~\eqref{eq: tail behavior residual job sizes}.
The proof of i) follows by Theorem~\ref{thm: tail waiting time subexponential fcfs} and Lemma~\ref{lem: bounds in o regularly varying}.
For the response time we can again use Lemmas~\ref{lem: upperbound waiting time fcfs} and~\ref{lem: lowerbound waiting time fcfs} as in Theorem~\ref{thm: tail waiting time subexponential fcfs}. Using the known result for the tail behavior in the single-server queue, see~\eqref{eq: tail behavior single server fcfs}, together with Lemma~\ref{lem: sum minimum regularly varying} we obtain that
\begin{align*}
\mathbb{P}(R > x) \geq \mathbb{P}(R_{\mathrm{L}} > x) = \mathbb{P}(W_{\mathrm{L}} + B_{(n_{\mathrm{J}})} > x) \sim \frac{\rho_{\mathrm{L}}}{1-\rho_{\mathrm{L}}} \frac{L(x) x^{1-\tilde{\nu}}}{(\tilde{\nu} - 1)\mathbb{E}[B_{(n_{\mathrm{J}})}]}~~~ \text{as } x \rightarrow \infty,
\end{align*}
and
\begin{align*}
\mathbb{P}(R > x) \leq \mathbb{P}(R_{\mathrm{U}} > x) = \mathbb{P}(W_{\mathrm{U}} + B_{(n_{\mathrm{J}})} > x) \sim \frac{\rho_{\mathrm{U}}}{1-\rho_{\mathrm{U}}} \frac{L(x) x^{1-\tilde{\nu}}}{(\tilde{\nu} - 1)\mathbb{E}[B_{(n_{\mathrm{J}})}]} ~~~ \text{as } x \rightarrow \infty.
\end{align*}
Now we can apply Lemma~\ref{lem: bounds in o regularly varying} in~\ref{app sec: preliminary results} and obtain the desired result. 
\end{proof}

\begin{remark}
For identical replicas we can even find a better upper bound in Theorem~\ref{thm: tail waiting time subexponential fcfs}. Indeed, consider the system in which all replicas are completely served. This system is equivalent to a $GI/G/1/FCFS$ queue with a random selection of the arrivals based on Bernoulli experiments with probability $n_{\mathrm{F}}/N$, i.e., mean interarrival time $\frac{N \mathbb{E}[A]}{n_{\mathrm{F}}}$, and job size $B$, which is equal to $B_{(n_{\mathrm{J}})}$ in the case of identical replicas. 
\end{remark}

Observe that all the results for the tail index rely on the fact that the upper bound system is stable. The stability condition of this system does not necessarily coincide with the stability condition of the original fork-join model. We conjecture that these tail index results are valid whenever the original fork-join model is stable. However, note that constructing a tractable upper bound system with the same stability condition as the original fork-join model is hard, because this stability condition is unknown. 

Interestingly, in contrast to the c.o.s.\ variant of redundancy, we observe that the tail index in the c.o.c.\ variant of the fork-join model does not depend on the load of the system. The main difference between the two variants is that for the c.o.s.\ variant we need multiple big jobs for a large value of $W_{\mathrm{min}}$ to occur, whereas for the c.o.c.\ variant we only need one big job. Moreover, note that a big job means that at least $n_{\mathrm{F}}+1-n_{\mathrm{J}}$ replica sizes should be big since we cancel the redundant replicas as soon as the first $n_{\mathrm{J}}$ replicas complete service. This is the reason why for i.i.d.\ replicas we get the tail index $1-(n_{\mathrm{F}}+1-n_{\mathrm{J}})\nu$ and for identical replicas $1-\nu$.

\begin{comment}
For Poisson arrivals with rate $\lambda$, the result in Corollary~\ref{cor: tail waiting time regularly varying fcfs} has the following intuitive interpretation, which is similar to that of an $M/G/1$ (see \cite{Z-TABP}). Let $\rho^{*} < 1$ be the stability condition for the c.o.c.\ variant of redundancy with $\rho_{\mathrm{L}} \leq \rho^{*} \leq  \rho_{\mathrm{U}}$. Note that finding an explicit expression for $\rho^{*}$ is still an open problem. 
For a large waiting time at time $t$ to occur, a single large job, i.e., large job size for all the replicas, is sufficient. The probability of an arrival in $(y,y+\mathrm{d}y)$ of a job of at least $x + (1 - \rho^{*})y$ is given by
\begin{align*}
\lambda \mathbb{P} \left( B_{\mathrm{min}} > x + (1 - \rho^{*})y \right),
\end{align*} 
after integration it follows that 
\begin{align*}
\int_{y=0}^{\infty} \lambda \mathbb{P} \left( B_{\mathrm{min}} > x + (1 - \rho^{*})y \right) \mathrm{d}y = \frac{\rho^{*}}{1-\rho^{*}} \mathbb{P}(B_{\mathrm{min}}^{\mathrm{res}} > x).
\end{align*}
\end{comment}

In the remainder of this subsection we focus on two special cases of the dependency structure, namely identical and i.i.d.\ replicas. 

For the special case of identical replicas in the c.o.c.\ variant of the fork-join model with the FCFS discipline we have concluded:
if $B \in RV(-\nu)$, then $R \in ORV(1-\nu)$ which is independent of the number of replicas.
We may conclude that the tail index is the same as for the single-server queue, see~\eqref{eq: tail behavior single server fcfs}. Moreover, if we compare the tail index of the c.o.s.\ and c.o.c.\ variants of redundancy scheduling with identical replicas it follows that the c.o.s.\ variant always performs better from a tail perspective. 

For the special case of i.i.d.\ replicas in the c.o.c.\ variant of the fork-join model with the FCFS discipline we have concluded:
if $B \in RV(-\nu)$, then $R \in ORV(1-(n_{\mathrm{F}}+1-n_{\mathrm{J}})\nu)$. 
If $n_{\mathrm{F}}=n_{\mathrm{J}}=1$, then $R \in ORV(1-\nu)$ which is consistent with the case of identical replicas. Moreover, if we compare the tail index of the c.o.s.\ and c.o.c.\ variants of redundancy scheduling with i.i.d.\ replicas it follows that the c.o.c.\ variant always performs better from a tail perspective. Observe that this statement is in contrast with the case for identical replicas.  

We studied two special structures for the dependency between replicas. The general case, with a vector $(B_1,\dots,B_{n_{\mathrm{F}}})$ of possibly dependent and multivariate regularly varying job sizes, will be more involved.
For further information on multivariate regular variation we refer to~\cite{R-HTA} or~\cite[Appendix A1.5]{BGT-RV} and the references therein. 

We determined the tail behavior for the c.o.s.\ variant of redundancy scheduling and the c.o.c.\ variant of the more general fork-join model. It can be concluded that the analysis of the c.o.s.\ variant is much more challenging than of the c.o.c.\ variant. One of the reasons is that for the c.o.s.\ variant multiple big jobs might be needed to have a large waiting time while for the c.o.c.\ variant only one big job is needed. In some sense this is remarkable, since for the stability condition it is the other way around: The stability condition for the c.o.s.\ variant of redundancy scheduling is known, whereas for the c.o.c.\ variant of the fork-join model, and also redundancy-$d$ scheduling, it is still an open problem for non-exponential job size distributions.

\section{LCFS-PR discipline}
\label{sec: lcfs}
In this section we study the tail behavior of the response time in the fork-join model with the LCFS-PR discipline. First, we discuss known results for the single-server queue and in Sections~\ref{sec: cos lcfs} and~\ref{sec: coc lcfs} the tail behavior for the c.o.s.\ and c.o.c.\ variants of the fork-join model is discussed, respectively. 

For the $GI/G/1$ queue with regularly varying job sizes the tail behavior of the response time distribution is known
\begin{align}
\label{eq: tail behavior single server lcfs}
\mathbb{P}(R_{\text{LCFS-PR}} > x) \sim \mathbb{E}[N_{\text{bp}}] (1-\tilde{\rho})^{-\nu} L(x) x^{-\nu} ~~~ \text{as } x \rightarrow \infty,
\end{align}
where $N_{\text{bp}}$ denotes the number of jobs completed during a busy period, see~\cite{Z-TABP}. 
One way to understand~\eqref{eq: tail behavior single server lcfs} is the following. 
First observe that for the LCFS-PR discipline
\begin{align*}
R_{\text{LCFS-PR}} \stackrel{d}{=} P,
\end{align*}
where $P$ is the busy period of a $GI/G/1$ queue. 
Let $V(t)$ be the amount of work in the system at time $t$ and assume that the first job arrives in an empty system at time $0$. The busy period $P$ is then defined as
\begin{align*}
P := \inf\{t>0 : V(t) = 0\}. 
\end{align*}
Let the cycle maximum $C_{\text{max}}$ be defined by
\begin{align*}
C_{\text{max}} := \sup\{V(t), 0 \leq t \leq P\}.
\end{align*}
It is shown, see for example~\cite[Corollary 2.2]{HRS-PBOCQ}, that subexponentiality of $B$ implies that $\mathbb{P}(C_{\text{max}} > x) \sim \mathbb{P}(W_{\text{max}} > x)$, where $W_{\text{max}}$ is the maximum waiting time during a busy period, and from~\cite{A-SEASP} we know that,
\begin{align*}
\mathbb{P}(W_{\text{max}} > x) \sim \mathbb{E}[N_{\mathrm{bp}}] \mathbb{P}(B > x) ~~~ \text{as } x \rightarrow \infty.
\end{align*}
Combining both relations gives
\begin{align*}
\mathbb{P}(C_{\text{max}} > x) \sim \mathbb{E}[N_{\mathrm{bp}}] \mathbb{P}(B > x) ~~~ \text{as } x \rightarrow \infty.
\end{align*}
A large maximum waiting time is most likely due to one large job. After this large job, the system behaves normally and the workload goes to zero with negative drift $-(1-\tilde{\rho})$. Hence if $C_{\text{max}}$ is large, then one would expect that
\begin{align*}
P \approx \frac{C_{\text{max}}}{1-\tilde{\rho}},
\end{align*}
from which it follows that
\begin{align*}
\mathbb{P}(P > x) \sim \mathbb{E}[N_{\mathrm{bp}}] (1-\tilde{\rho})^{-\nu} L(x) x^{-\nu} ~~~ \text{as } x \rightarrow \infty.
\end{align*}
Observing that the busy period coincides with the response time of a job for the LCFS-PR discipline gives the desired result in~\eqref{eq: tail behavior single server lcfs}. 

\subsection{Cancel-on-start}
\label{sec: cos lcfs}
Note that for the LCFS-PR discipline the c.o.s.\ variant of the fork-join($n_{\mathrm{F}},n_{\mathrm{J}}$) model is equivalent to the system where replicas of each job are assigned to $n_{\mathrm{J}}$ servers chosen uniformly at random (without replacement), since all replicas immediately go into service.  
Thus, each queue is equivalent with a $GI/G/1/LCFS \text{-} PR$ queue with a random selection of the arrivals based on Bernoulli experiments with probability $n_{\mathrm{J}}/N$, i.e., mean interarrival time $N \mathbb{E}[A]/n_{\mathrm{J}}$ and mean job size $\mathbb{E}[B]$. Hence the stability condition is $\tilde{\rho} < \frac{N}{n_{\mathrm{J}}}$. For regularly varying job sizes the tail behavior of the response time is given by
\begin{align*}
\mathbb{P}(R > x) = \mathbb{P}(\max_{i=1,\dots,n_{\mathrm{J}}}R_{\mathrm{LCFS-PR}} > x) \sim n_{\mathrm{J}} \mathbb{P}(R_{\mathrm{LCFS-PR}} > x) ~~~ \text{as } x \rightarrow \infty,
\end{align*}
see for example~\cite{M-RVSAP}. 

Observe that a similar reasoning is applicable for any service discipline in which all replicas immediately go into service. Another example is the processor-sharing (PS) discipline for which the tail behavior of the response time for the single-server queue with regularly varying job sizes with index $-\nu$ is given by
\begin{align*}
\mathbb{P}(R_{\text{PS}} > x) \sim (1-\tilde{\rho})^{-\nu} L(x) x^{-\nu} ~~~ \text{as } x \rightarrow \infty,
\end{align*} 
see for example~\cite[Chapter 3]{Z-QSHT} or~\cite{ZB-SAPS}. 

\subsection{Cancel-on-completion}
\label{sec: coc lcfs}
In this section we analyze the tail asymptotics for the c.o.c.\ variant of the fork-join($n_{\mathrm{F}},n_{\mathrm{J}}$) model with the LCFS-PR discipline. 
Similarly to the FCFS discipline in Section~\ref{sec: coc fcfs} we first state a lemma that provides an upper bound for the response time. 

\begin{lemma}
\label{lem: upperbound response time lcfs}
The response time in the fork-join($n_{\mathrm{F}},n_{\mathrm{J}}$) model with the LCFS-PR discipline is stochastically bounded from above by the response time in a $GI/G/1/LCFS \text{-} PR$ queue with interarrival time $A$ and job size $B_{(n_{\mathrm{J}})}$.
\end{lemma}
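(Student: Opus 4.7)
The plan is to mirror the approach used in the proof of Lemma~\ref{lem: upperbound waiting time fcfs}, but with an auxiliary system whose dynamics respect the LCFS-PR discipline. I would introduce an auxiliary fork-join system in which every arriving job is routed to the same fixed set of $n_{\mathrm{F}}$ servers, say $\{1,\dots,n_{\mathrm{F}}\}$, and in which these servers operate in perfect synchrony: whenever a new job arrives, all $n_{\mathrm{F}}$ servers simultaneously preempt their current replicas and switch to the new job's replicas; as soon as $n_{\mathrm{J}}$ replicas of the currently served job complete, the remaining replicas are canceled and all servers simultaneously resume the most recently preempted job. By construction each job occupies the servers for exactly $B_{(n_{\mathrm{J}})}$ wall-clock units, because all $n_{\mathrm{F}}$ replicas are processed in parallel and the job completes at the moment the $n_{\mathrm{J}}$-th order statistic expires. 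Hence this auxiliary is pathwise equivalent to a single $GI/G/1/LCFS\text{-}PR$ queue with interarrival time $A$ and job size $B_{(n_{\mathrm{J}})}$.

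With the auxiliary in hand, I would couple the two systems on a common probability space with identical arrival epochs and identical joint replica sizes, the only difference being the (random versus fixed) server assignments. Under LCFS-PR the response time of a tagged job $J$ in the auxiliary coincides with the forward busy period starting at $t_{J}$ in the single-server equivalent, i.e.\ $R_{\mathrm{U}}^{J}=B_{(n_{\mathrm{J}})}^{J}+\sum_{K:\,t_{K}\in(t_{J},t_{J}+R_{\mathrm{U}}^{J})}B_{(n_{\mathrm{J}})}^{K}$. The aim is to show, pathwise, that by time $t_{J}+R_{\mathrm{U}}^{J}$ at least $n_{\mathrm{J}}$ of $J$'s replicas have completed in the original system. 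This is established by arguing that on each of $J$'s servers $s\in S_{J}$ the service delivered to $J$'s replica in the interval $[t_{J},t_{J}+R_{\mathrm{U}}^{J}]$ is at least $B_{(n_{\mathrm{J}})}^{J}$: the later arrivals that preempt $J$ on server $s$ form a subset of those in the auxiliary forward busy period, and by c.o.c.\ cancellation every such preempting job $K$ occupies server $s$ for no more than its effective contribution $B_{(n_{\mathrm{J}})}^{K}$. Since, by definition, $n_{\mathrm{J}}$ of $J$'s replicas have size at most $B_{(n_{\mathrm{J}})}^{J}$, these replicas must have finished by time $t_{J}+R_{\mathrm{U}}^{J}$, yielding $R^{J}\leq R_{\mathrm{U}}^{J}$ pathwise and hence stochastic dominance of the response times.

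The hard part is exactly this sample-path comparison. In contrast to the FCFS setting, where pointwise dominance of each server's workload translates immediately into waiting-time dominance, the LCFS-PR response time of $J$ depends on the entire preemption cascade triggered by later arrivals, whose own response times enter the bound recursively. To break this circularity I would run a joint induction on arrivals in chronological order, carrying simultaneously the invariants that (i) for every job processed so far the original-system response time is at most its auxiliary counterpart, and (ii) on every server $s$ the cumulative busy intervals generated by arrivals at or after $t_{J}$ are contained in the corresponding auxiliary forward busy period. Maintaining both invariants through each arrival and each c.o.c.\ cancellation event is the main bookkeeping effort and the principal source of technical difficulty.
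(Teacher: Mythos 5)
Your auxiliary construction matches the paper's: assign every job to the same fixed $n_{\mathrm{F}}$ servers and keep them synchronized, so the system collapses to a single $GI/G/1/LCFS \text{-} PR$ queue with job sizes $B_{(n_{\mathrm{J}})}$. The gap is in the sample-path comparison. Your key claim---that a preempting job $K$ occupies any fixed server $s \in S_{K}$ for at most $B_{(n_{\mathrm{J}})}^{K}$ units of time because of c.o.c.\ cancellation---is false under LCFS-PR. Cancellation is triggered when $n_{\mathrm{J}}$ of $K$'s replicas \emph{complete}, and under LCFS-PR the completion order is not governed by replica size, because the servers in $S_{K}$ may have very different preemption histories. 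Concretely, if $K$'s replica at $s$ is larger than $B_{(n_{\mathrm{J}})}^{K}$ while the smaller replicas sit at servers that are heavily preempted by subsequent jobs not routed to $s$, then $s$ can process its replica of $K$ to completion before $n_{\mathrm{J}}$ others finish, spending strictly more than $B_{(n_{\mathrm{J}})}^{K}$ on $K$. The bound $R^{J}\le R_{\mathrm{U}}^{J}$ still tends to hold because the jobs causing the overage are precisely those \emph{not} assigned to $s$, so their $B_{(n_{\mathrm{J}})}$-contributions to $R_{\mathrm{U}}^{J}$ leave compensating slack at $s$; but this trade-off is exactly what per-job accounting misses, and the recursion you outline does not repair it.

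The paper sidesteps this by extending the \emph{workload} argument of Lemma~\ref{lem: upperbound waiting time fcfs} rather than a per-job time accounting: with $\omega_{i}$ the residual workload at server $i$ (defined so as to incorporate the c.o.c.\ cancellation effect), one shows by induction over arrival epochs that $\omega_{i}\le\omega_{\mathrm{U}}$ pathwise, where $\omega_{\mathrm{U}}$ is the auxiliary single-server workload; this aggregate inequality absorbs the give-and-take between jobs that your per-job bound cannot. Since under LCFS-PR the jobs arriving at or after $t_{J}$ are served exactly as they would be if the system were empty at $t_{J}^{-}$, applying the workload dominance to the sub-system started at $t_{J}$ shows that by time $t_{J}+R_{\mathrm{U}}^{J}$ every server in $S_{J}$ has cleared all work from $J$ and its successors, whence $R^{J}\le R_{\mathrm{U}}^{J}$. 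Your invariant (ii) is heading toward this workload inequality and could likely be made rigorous, but the per-job occupation bound stated in your second paragraph must be dropped, as it is simply not true.
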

\begin{proof}
Consider an auxiliary system in which all jobs are assigned to the same $n_{\mathrm{F}}$ servers and where servers wait with serving a new job until all the $n_{\mathrm{J}}$ replicas are finished. This system is equivalent to the $GI/G/1/LCFS \text{-} PR$ queue as defined in the lemma. Let $\omega_{i}$ be the workload at server $i$, where we define the workload as the amount of work a server needs to complete to become idle in the absence of any arrivals. It can be shown, by taking similar steps as in Lemma~\ref{lem: upperbound waiting time fcfs}, that $\omega_{i}$ 
is bounded from above by the workload $\omega_{\mathrm{U}}$ in the auxiliary $GI/G/1/LCFS \text{-} PR$ system at all times. Observe that the bound for the workload implies $R \leq R_{\mathrm{U}}$.
\end{proof}

A sufficient stability condition for general interarrival times and job sizes is $\rho_{\mathrm{U}} = \frac{\mathbb{E}[B_{(n_{\mathrm{J}})}]}{\mathbb{E}[A]} < 1$, which can be proved via the upper bound system given in Lemma~\ref{lem: upperbound response time lcfs}. The exact stability condition of the c.o.c.\ variant of the fork-join($n_{\mathrm{F}},n_{\mathrm{J}}$) model, and also redundancy-$d$ scheduling, with the LCFS-PR discipline in such a general setting is still unknown. 

The next theorem provides insight in the tail behavior when the distribution of the $n_{\mathrm{J}}$th order statistic of the job size is regularly varying, i.e., $B_{(n_{\mathrm{J}})} \in RV(-\tilde{\nu})$. Similarly to the FCFS discipline it includes the special cases of identical replicas ($\tilde{\nu} = \nu$) and i.i.d.\ replicas ($\tilde{\nu} = (n_{\mathrm{F}}+1-n_{\mathrm{J}})\nu$).

\begin{theorem}
\label{thm: tail response time regularly varying lcfs}
For the c.o.c.\ variant of the fork-join($n_{\mathrm{F}},n_{\mathrm{J}}$) model with the LCFS-PR discipline and $\rho_{\mathrm{U}} < 1$:
if $B_{(n_{\mathrm{J}})} \in RV(-\tilde{\nu})$, then $R \in ORV(-\tilde{\nu})$.
\end{theorem}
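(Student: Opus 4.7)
The plan is to sandwich $\mathbb{P}(R > x)$ between two expressions of order $\tilde{L}(x) x^{-\tilde{\nu}}$, with $\tilde{L}$ denoting the slowly varying factor of $\mathbb{P}(B_{(n_{\mathrm{J}})} > x)$, and then conclude via Lemma~\ref{lem: bounds in o regularly varying} in~\ref{app sec: preliminary results}, exactly as in the FCFS argument of Corollary~\ref{cor: tail waiting time regularly varying fcfs}. For the upper bound, Lemma~\ref{lem: upperbound response time lcfs} gives that $R$ is stochastically dominated by $R_{\mathrm{U}}$, the response time in a $GI/G/1/\text{LCFS-PR}$ queue with interarrival time $A$ and job size $B_{(n_{\mathrm{J}})}$. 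Since $\rho_{\mathrm{U}} < 1$ guarantees stability of this auxiliary queue and $B_{(n_{\mathrm{J}})} \in RV(-\tilde{\nu})$, the classical single-server asymptotic~\eqref{eq: tail behavior single server lcfs} yields
\begin{align*}
\mathbb{P}(R > x) \leq \mathbb{P}(R_{\mathrm{U}} > x) \sim \mathbb{E}[N_{\text{bp}}](1-\rho_{\mathrm{U}})^{-\tilde{\nu}} \tilde{L}(x) x^{-\tilde{\nu}} \quad \text{as } x \rightarrow \infty.
\end{align*}

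For the lower bound I would use a direct pathwise estimate: each completed replica requires at least its full service time to finish on a unit-speed server, so the maximum completion time among the $n_{\mathrm{J}}$ completed replicas of the tagged job dominates the maximum of their service requirements, and this maximum is at least $B_{(n_{\mathrm{J}})}$ by the pigeonhole principle (any $n_{\mathrm{J}}$-element subset of $\{B_{1},\dots,B_{n_{\mathrm{F}}}\}$ must contain an element of rank at least $n_{\mathrm{J}}$, since only $n_{\mathrm{J}}-1$ replicas lie strictly below $B_{(n_{\mathrm{J}})}$). Hence $R \geq B_{(n_{\mathrm{J}})}$ almost surely, whence $\mathbb{P}(R > x) \geq \mathbb{P}(B_{(n_{\mathrm{J}})} > x) = \tilde{L}(x) x^{-\tilde{\nu}}$. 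Combining the two bounds produces constants $c_{1}, c_{2} > 0$ with $c_{1} \tilde{L}(x) x^{-\tilde{\nu}} \leq \mathbb{P}(R > x) \leq c_{2} \tilde{L}(x) x^{-\tilde{\nu}}$ for all sufficiently large $x$, and Lemma~\ref{lem: bounds in o regularly varying} then delivers $R \in ORV(-\tilde{\nu})$. The only subtlety is the pathwise lower bound under LCFS-PR preemption, but since preemption can only delay a replica's completion, the inequality $R \geq B_{(n_{\mathrm{J}})}$ remains valid, so no substantive technical difficulty arises and the proof reduces to assembling Lemma~\ref{lem: upperbound response time lcfs}, the single-server asymptotic~\eqref{eq: tail behavior single server lcfs}, and the pigeonhole estimate.
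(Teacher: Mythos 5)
Your proposal matches the paper's proof: both use Lemma~\ref{lem: upperbound response time lcfs} and the single-server LCFS-PR asymptotic~\eqref{eq: tail behavior single server lcfs} for the upper bound, the pathwise inequality $R \geq B_{(n_{\mathrm{J}})}$ for the lower bound, and Lemma~\ref{lem: bounds in o regularly varying} to conclude. The only difference is that you spell out the pigeonhole justification for $R \geq B_{(n_{\mathrm{J}})}$, which the paper simply asserts; this is a correct and welcome elaboration, not a departure.
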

\begin{proof}
\textit{Upper bound:}
By Lemma~\ref{lem: upperbound response time lcfs} the response time is bounded from above by the response time in a $GI/G/1/LCFS \text{-} PR$ queue with interarrival time $A$ and job size $B_{(n_{\mathrm{J}})}$. Let $R_{\mathrm{U}}$ denote the response time in this upper bound system. 
Since $B_{(n_{\mathrm{J}})}$ is regularly varying, we can apply known results for the single-server queue, see~\eqref{eq: tail behavior single server lcfs}, and obtain
\begin{align*}
\mathbb{P}(R_{\mathrm{U}} > x) \sim \mathbb{E}[N_{\mathrm{bp}}] (1-\rho_{\mathrm{U}})^{-\tilde{\nu}} L(x) x^{-\tilde{\nu}} ~~~ \text{as } x \rightarrow \infty,
\end{align*}
where $\rho_{\mathrm{U}}=\frac{\mathbb{E}[B_{(n_{\mathrm{J}})}]}{\mathbb{E}[A]}$.\\
\textit{Lower bound:} One could argue that $R$ cannot have a heavier tail than $R_{\mathrm{U}}$, but also not a lighter tail, since
\begin{align*}
\mathbb{P}(R > x) \geq \mathbb{P}(B_{(n_{\mathrm{J}})} > x) = L(x) x^{-\tilde{\nu}}, ~~~x>0.
\end{align*}
The proof follows by Lemma~\ref{lem: bounds in o regularly varying} in~\ref{app sec: preliminary results}.
\end{proof}

\begin{remark}
For identical replicas we can even find a better upper bound in Theorem~\ref{thm: tail response time regularly varying lcfs}. Indeed, consider the system in which all replicas are completely served. This system is equivalent to a $GI/G/1/LCFS \text{-} PR$ queue with a random selection of the arrivals based on Bernoulli experiments with probability $n_{\mathrm{F}}/N$, i.e., mean interarrival time $\frac{N \mathbb{E}[A]}{n_{\mathrm{F}}}$, and job size $B$, which is equal to $B_{(n_{\mathrm{J}})}$ in the case of identical replicas. 
\end{remark}

Theorem~\ref{thm: tail response time regularly varying lcfs} indicates that for the LCFS-PR discipline the tail of the response time is just as heavy as the tail of the job size. Comparing the tail behavior in redundancy-$d$ scheduling with the LCFS-PR discipline and with the FCFS discipline we can conclude that, for the c.o.s.\ variant, the LCFS-PR discipline has better tail behavior than (or equally good as) the FCFS discipline. Loosely speaking, the tail behavior of the LCFS-PR discipline is better in scenarios with small load and a small number of replicas $d$ and the tail behavior of the two service disciplines is similar in all other scenarios. 
For the c.o.c.\ variant of the fork-join model the LCFS-PR discipline always has better tail behavior than the FCFS discipline for all dependency structures between the replicas. 

\section{Numerical results}
\label{sec: numerical results}
In the previous sections we determined the tail behavior of the response time for heavy-tailed job sizes. In this section we provide simulation results for redundancy-$d$ scheduling that illustrate this tail behavior in various scenarios. All the simulation experiments are conducted with $10^{9}$ number of jobs. The figures are in log-log scale and we consider Pareto distributed job sizes with shape value $\nu=1.5$, which means that $B \in RV(-1.5)$. Note that in the simulation $\mathbb{P}(R > x) = 0$ for $x$ big enough, which explains the steep drop in all the figures. 

In Figure~\ref{fig: latencytail cos lambda 25} the tail behavior of the response time for the c.o.s.\ variant of redundancy is depicted, see Corollary~\ref{cor: tail waiting response time regularly varying cos fcfs} for the corresponding asymptotic bound. It can be seen that especially the lines for $d=2$ and $d=N=3$ are following the line representing tail index $-0.5$ quite well. For $d=1$ it can be seen that at first it diverges, but after $x>10$ it also runs parallel to the line representing tail index $-0.5$. 

\begin{figure}[H]
  \centering
\resizebox{0.8\textwidth}{!}{\newcommand{\dataFigure}{TikZFigures/latencyTail_cos_n_10_9_lambda_25_d.csv}

\begin{tikzpicture}
		\begin{axis}[
			xlabel=$x$,
			ylabel=$\log_{10}(\mathbb{P}(R > x))$,
			ymin=-6,
			ymax=1,
			xmin=1,
			xmax=2*10^6,
			no markers,
			xmode=log,
			legend pos= outer north east,
			legend cell align=left]
		\addlegendimage{empty legend}
		\addplot+ table [x=x,y=d1, col sep=comma]{\dataFigure};
		\addplot+ table [x=x,y=d2, col sep=comma]{\dataFigure};
		\addplot+ table [x=x,y=d3, col sep=comma]{\dataFigure};
		\addplot+[dashed] table [x=x,y=index, col sep=comma]{\dataFigure};
		\addlegendentry{Replicas:}
		\addlegendentry{$d=1$}
		\addlegendentry{$d=2$}
		\addlegendentry{$d=N=3$}
		\end{axis}
\end{tikzpicture}}
\caption{Tail behavior for the response time in the c.o.s.\ variant of redundancy-$d$ scheduling with Pareto$(\nu=1.5,x_{m}=1/3)$ job sizes, $\mathbb{E}[B]=1$, $N=3$, $\tilde{\rho}=2.5$ and the FCFS discipline. The dashed line depicts the function $y=x^{-0.5}$.}
\label{fig: latencytail cos lambda 25}
\end{figure}
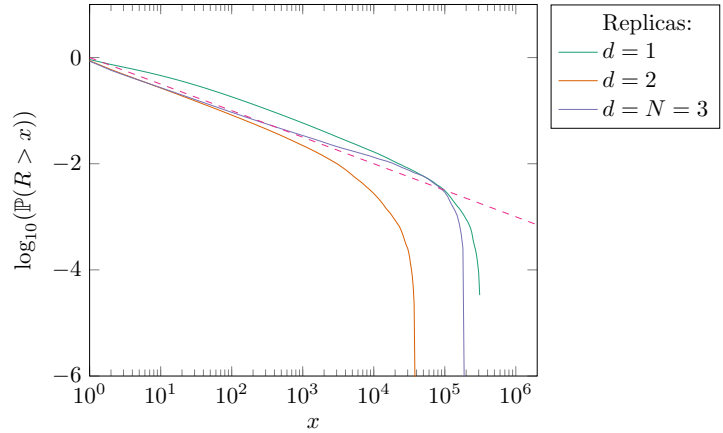

Figure~\ref{fig: latencytail coc identical lambda 05} shows the tail behavior for the response time in the c.o.c.\ variant of redundancy with identical Pareto job sizes, see Corollary~\ref{cor: tail waiting time regularly varying fcfs} for the asymptotic bound. It can be seen that for every number of replicas the tail index is equivalent to the value identified in Corollary~\ref{cor: tail waiting time regularly varying fcfs}. Interestingly, this figure shows that for $d=2$ the asymptotic lower bound represents the exact tail behavior better than the upper bound. 

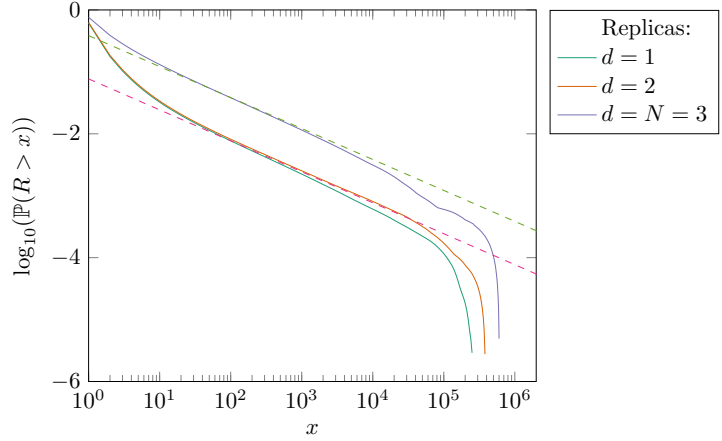
\begin{figure}[H]
  \centering
\resizebox{0.8\textwidth}{!}{\newcommand{\dataFigure}{TikZFigures/latencyTail_coc_identical_n_10_9_lambda_05_d.csv}

\begin{tikzpicture}
		\begin{axis}[
			xlabel=$x$,
			ylabel=$\log_{10}(\mathbb{P}(R > x))$,
			ymin=-6,
			ymax=0,
			xmin=1,
			xmax=2*10^6,
			no markers,
			xmode=log,
			legend pos= outer north east,
			legend cell align=left]
		\addlegendimage{empty legend}
		\addplot+ table [x=x,y=d1, col sep=comma]{\dataFigure};
		\addplot+ table [x=x,y=d2, col sep=comma]{\dataFigure};
		\addplot+ table [x=x,y=d3, col sep=comma]{\dataFigure};
		\addplot+[dashed] table [x=x,y=index_lower, col sep=comma]{\dataFigure};
		\addplot+[dashed] table [x=x,y=index_upper, col sep=comma]{\dataFigure};
		\addlegendentry{Replicas:}
		\addlegendentry{$d=1$}
		\addlegendentry{$d=2$}
		\addlegendentry{$d=N=3$}
		\end{axis}
\end{tikzpicture}}
\caption{Tail behavior for the response time in the c.o.c.\ variant of redundancy-$d$ scheduling with identical Pareto$(\nu=1.5,x_{m}=1/3)$ job sizes, $\mathbb{E}[B]=1$, $N=3$, $\tilde{\rho}=0.5$ and the FCFS discipline. The dashed lines depict the tail behavior for the response time in the lower bound ($\mathbb{P}(R_{\mathrm{L}} > x)$) and in the upper bound ($\mathbb{P}(R_{\mathrm{U}} > x)$) given in Corollary~\ref{cor: tail waiting time regularly varying fcfs}. Note that the system with $d=1$ and $d=3=N$ is equivalent to the lower and upper bound system, respectively.}
\label{fig: latencytail coc identical lambda 05}
\end{figure}

Figure~\ref{fig: latencytail coc iid lambda 05} depicts the tail behavior for the response time in the c.o.c.\ variant of redundancy with i.i.d.\ Pareto job sizes.  Note that according to Corollary~\ref{cor: tail waiting time regularly varying fcfs} the tail index is given by $1-d\nu$. To get the same tail behavior for all the numbers of replicas in Figure~\ref{fig: latencytail coc iid lambda 05} we scaled the job size with $d$. 

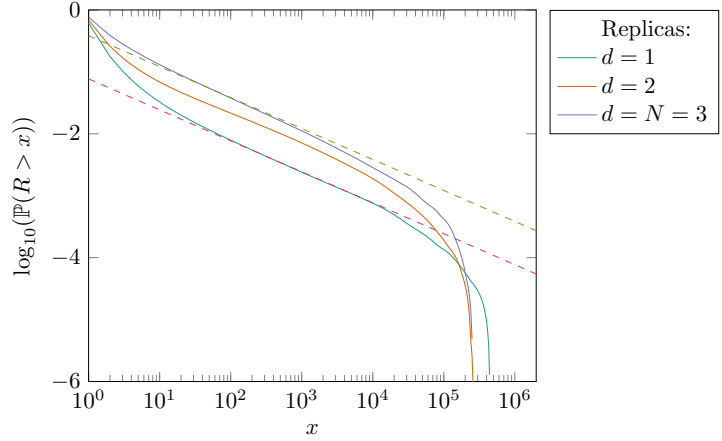
\begin{figure}[H]
  \centering
\resizebox{0.8\textwidth}{!}{\newcommand{\dataFigure}{TikZFigures/latencyTail_coc_independent_n_10_9_lambda_05_d.csv}

\begin{tikzpicture}
		\begin{axis}[
			xlabel=$x$,
			ylabel=$\log_{10}(\mathbb{P}(R > x))$,
			ymin=-6,
			ymax=0,
			xmin=1,
			xmax=2*10^6,
			no markers,
			xmode=log,
			legend pos= outer north east,
			legend cell align=left]
		\addlegendimage{empty legend}
		\addplot+ table [x=x,y=d1, col sep=comma]{\dataFigure};
		\addplot+ table [x=x,y=d2, col sep=comma]{\dataFigure};
		\addplot+ table [x=x,y=d3, col sep=comma]{\dataFigure};
		\addplot+[dashed] table [x=x,y=index_lower, col sep=comma]{\dataFigure};
		\addplot+[dashed] table [x=x,y=index_upper, col sep=comma]{\dataFigure};
		\addlegendentry{Replicas:}
		\addlegendentry{$d=1$}
		\addlegendentry{$d=2$}
		\addlegendentry{$d=N=3$}
		\end{axis}
\end{tikzpicture}}
\caption{Tail behavior for the c.o.c.\ variant of redundancy-$d$ scheduling with i.i.d.\ Pareto$(\nu=1.5/d,x_{m}=1/3)$ job sizes, $\mathbb{E}[B_{\mathrm{min}}]=1$, $N=3$, $\tilde{\rho}=0.5$ and the FCFS discipline. The dashed lines depict the tail behavior for the response time in the lower bound ($\mathbb{P}(R_{\mathrm{L}} > x)$) and in the upper bound ($\mathbb{P}(R_{\mathrm{U}} > x)$) given in Corollary~\ref{cor: tail waiting time regularly varying fcfs}. Note that the system with $d=1$ and $d=3=N$ is equivalent to the lower and upper bound system, respectively.}
\label{fig: latencytail coc iid lambda 05}
\end{figure}

\begin{comment}
\begin{figure}[H]
  \centering
\resizebox{0.8\textwidth}{!}{\input{TikZFigures/waitingtimeTail_coc_independent_n_10_9_lambda_05_d.tex}}
\caption{Tail behavior for the c.o.c.\ variant of redundancy-$d$ scheduling with i.i.d.\ Pareto$(\nu=1.5/d,x_{m}=1/3)$ job sizes, $\mathbb{E}[B_{\mathrm{min}}]=1$, $N=3$, $\tilde{\rho}=0.5$ and the FCFS discipline. The dashed lines depict the tail behavior for the waiting time in the lower bound ($\mathbb{P}(W_{\mathrm{L}} > x)$) and in the upper bound ($\mathbb{P}(W_{\mathrm{U}} > x)$) given in Theorem~\ref{thm: tail waiting time subexponential fcfs}. Note that the system with $d=1$ and $d=3=N$ is equivalent to the lower and upper bound system, respectively.}
\label{fig: waitingtimetail coc iid lambda 05}
\end{figure}
\end{comment}

So far we only considered the FCFS discipline. Figure~\ref{fig: latencytail LCFS coc identical lambda 05} shows the tail behavior of the response time for the c.o.c.\ variant of redundancy with the LCFS discipline. 

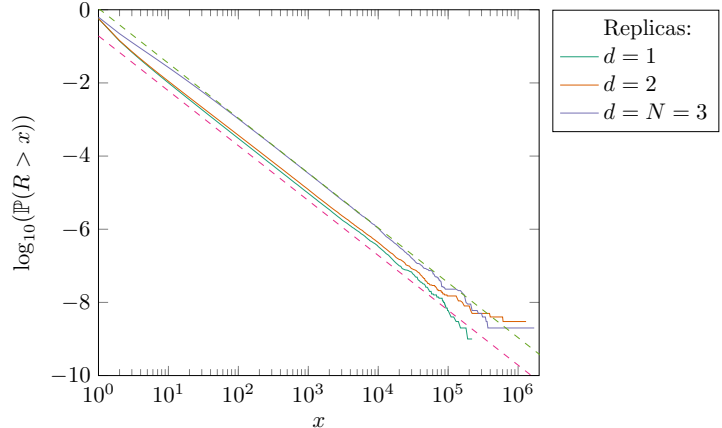
\begin{figure}[H]
  \centering
\resizebox{0.8\textwidth}{!}{\newcommand{\dataFigure}{TikZFigures/latencyTail_LCFS_coc_identical_n_10_9_lambda_05_d.csv}

\begin{tikzpicture}
		\begin{axis}[
			xlabel=$x$,
			ylabel=$\log_{10}(\mathbb{P}(R > x))$,
			ymin=-10,
			ymax=0,
			xmin=1,
			xmax=2*10^6,
			no markers,
			xmode=log,
			legend pos= outer north east,
			legend cell align=left]
		\addlegendimage{empty legend}
		\addplot+ table [x=x,y=d1, col sep=comma]{\dataFigure};
		\addplot+ table [x=x,y=d2, col sep=comma]{\dataFigure};
		\addplot+ table [x=x,y=d3, col sep=comma]{\dataFigure};
		\addplot+[dashed] table [x=x,y=index_lower, col sep=comma]{\dataFigure};
		\addplot+[dashed] table [x=x,y=index_upper, col sep=comma]{\dataFigure};
		\addlegendentry{Replicas:}
		\addlegendentry{$d=1$}
		\addlegendentry{$d=2$}
		\addlegendentry{$d=N=3$}
		\end{axis}
\end{tikzpicture}}
\caption{Tail behavior for the response time in the c.o.c.\ variant of redundancy-$d$ scheduling with identical Pareto$(\nu=1.5,x_{m}=1/3)$ job sizes, $\mathbb{E}[B]=1$, $N=3$, $\tilde{\rho}=0.5$ and the LCFS discipline. The dashed lines depict the tail behavior for the response time in the lower bound ($\mathbb{P}(R_{\mathrm{L}} > x)$) and in the upper bound ($\mathbb{P}(R_{\mathrm{U}} > x)$) given in Theorem~\ref{thm: tail response time regularly varying lcfs}. Note that the system with $d=1$ and $d=3=N$ is equivalent to the lower and upper bound system, respectively.}
\label{fig: latencytail LCFS coc identical lambda 05}
\end{figure}

\begin{comment}
\begin{figure}[H]
  \centering
\resizebox{0.8\textwidth}{!}{\input{TikZFigures/latencyTail_Forkjoin_identical_n_10_9_lambda_05_nJoin.tex}}
\caption{Tail behavior for the response time in the c.o.c.\ variant of fork-join($n_{\mathrm{F}},n_{\mathrm{J}}$) with identical Pareto$(\nu=1.5,x_{m}=1/3)$ job sizes, $\mathbb{E}[B]=1$, $N=4$, $\tilde{\rho}=0.5$ and the FCFS discipline.}
\label{fig: latencytail forkjoin identical lambda 05}
\end{figure}
\end{comment}

\section{Conclusion and suggestions for further research}
\label{sec: conclusion}
In this paper we studied the tail behavior of the response time in redundancy-$d$ scheduling and the fork-join model for heavy-tailed job sizes. 
In particular, for the c.o.s.\ variant of redundancy-$d$ with the FCFS discipline and subexponential job sizes we determined the tail behavior of the response time and showed that it depends on the load of the system. 
For the c.o.c.\ variant of the fork-join model we observed that the tail behavior of the response time depends on the dependency structure between the replicas. For job sizes $B \in RV(-\nu)$, our results indicate that for the c.o.s.\ variant of redundancy scheduling in the scenario of sufficiently small load having $d= \lceil \frac{\nu}{\nu-1} \rceil$ replicas already achieves the optimal asymptotic tail behavior of the response time. 
For high loads, the results indicate that creating many replicas yields no benefits for the tail index of the response time.
For the c.o.c.\ variant of the fork-join($n_{\mathrm{F}},n_{\mathrm{J}}$) model with identical and i.i.d.\ replicas the tail index of the response time is $1-\nu$ and $1-(n_{\mathrm{F}}+1-n_{\mathrm{J}})\nu$, respectively. Thus, the tail index is independent of the load of the system and for identical replicas even independent of the number of replicas.

Observe that all the results on the tail index for the c.o.c.\ variant of the fork-join model rely on the fact that the upper bound system is stable. The stability condition of this system does not necessarily coincide with the stability condition of the original fork-join model. For further research one could study the tail index for these values for the load, i.e., whenever the original fork-join model is stable but the upper bound system is unstable. 

A natural topic for further research would be to extend our analysis to heterogeneous servers or even more generally to job types that can have different speeds at the various servers, see for example the model in~\cite{RBB-DPP}.
 
Another extension would be to analyze the tail behavior of the response time for the ROS service discipline. As mentioned in the introduction, for the single-server queue this discipline has the same tail index as the FCFS discipline. Simulation experiments (not included in this paper) suggest that this statement extends to redundancy-$d$ scheduling. 
 
\section*{Acknowledgments}
The work in this paper is supported by the Netherlands Organisation for Scientific Research (NWO) through Gravitation grant NETWORKS 024.002.003.
The authors would like to thank Sergey Foss for his helpful suggestions and careful reading. 

\bibliography{references}

\appendix

\section{Preliminary results}
\label{app sec: preliminary results}

In this appendix we introduce several classes of heavy-tailed distributions that are considered for the job size in this paper, see also~\cite{BGT-RV,FKZ-IHTSED}. 
Let the complementary cumulative distribution function be defined as $\bar{F}_{B}(x):= 1-F_{B}(x) = \mathbb{P}(B > x)$.
\begin{definition} 
$B$ is heavy-tailed if, for all $\epsilon>0$,
\begin{align*}
\mathbb{E}[e^{\epsilon B}] = \infty,
\end{align*}
or equivalently (see for example~\cite[Theorem 2.6]{FKZ-IHTSED}), if for all $\epsilon>0$,
\begin{align*}
\mathbb{P}(B > x) e^{\epsilon x} \rightarrow \infty \text{ as } x \rightarrow \infty
\end{align*}
\end{definition}

Let $F_{B}^{n*}(x)$ be the $n$-fold convolution of $F_{B}(x)$ for $n=2,3,\dots$, with $F_B^{1*}(x)~\equiv F_B(x)$.

\begin{definition}
\label{def: subexponential}
$B$ is subexponential, denoted by $B \in \mathcal{S}$, if
\begin{align*}
\frac{\bar{F}_{B}^{2*}(x)}{\bar{F}_{B}(x)} = \frac{\mathbb{P}(B_{1}+B_{2}>x)}{\mathbb{P}(B>x)} \rightarrow 2 ~~~ \text{as }x \rightarrow \infty.
\end{align*}
\end{definition}

Examples of well-known subexponential distributions are Pareto, Lognormal and Weibull with a shape parameter between $0$ and $1$.

\begin{definition}
\label{def: long tailed}
$B$ is long-tailed, denoted by $B \in \mathcal{L}$, if $\bar{F}_{B}(x+1) \sim \bar{F}_{B}(x)$ as $x \rightarrow \infty$.
\end{definition}

Here $f(x) \sim g(x)$ means $\frac{f(x)}{g(x)} \rightarrow 1$ as $x \rightarrow \infty$.

\begin{definition}
\label{def: dominated varying}
$B$ is dominated varying, denoted by $B \in \mathcal{D}$, if $\bar{F}_{B}(2x) \geq c \bar{F}_{B}(x)$ for some $c > 0$ and for all $x$.
\end{definition}

\begin{definition}
$B$ is $\mathcal{O}$-regularly varying, denoted by $B \in ORV$, if 
\begin{align*}
0 < \liminf_{x \rightarrow \infty} \frac{\bar{F}_{B}(\alpha x )}{\bar{F}_{B}(x)} \leq \limsup_{x \rightarrow \infty} \frac{\bar{F}_{B}(\alpha x)}{\bar{F}_{B}(x)} < \infty, ~~~ \forall \alpha \geq 1.
\end{align*}
Furthermore, $B \in ORV(-\nu)$ if
\begin{align}
\label{eq: definition o regularly varying}
c_{1} \alpha^{-\nu} < \liminf_{x \rightarrow \infty} \frac{\bar{F}_{B}(\alpha x )}{\bar{F}_{B}(x)} \leq \limsup_{x \rightarrow \infty} \frac{\bar{F}_{B}(\alpha x)}{\bar{F}_{B}(x)} < c_{2} \alpha^{-\nu}, ~~~ \forall \alpha \geq 1,
\end{align}
with positive constants $c_{1}$ and $c_{2}$.
\end{definition}

\begin{definition}
\label{def: regularly varying}
$B$ is regularly varying of index $-\nu$, denoted by $B \in RV(-\nu)$, if
\begin{equation}
\bar{F}_{B}(x) = L(x) x^{-\nu}, ~~~x>0,
\end{equation}
with $L(x)$ a slowly varying function, i.e., $L(\alpha x)/L(x) \rightarrow 1$ for any $\alpha >0$.
\end{definition}
Observe that we have the following relations, see for example~\cite[Theorem 2.1.8]{BGT-RV} or~\cite[Chapter 2]{Z-QSHT},
\begin{align}
\label{eq: relation tail classes}
RV \subset \mathcal{D} \subset ORV  ~~~ \text{and} ~~~ RV \subset \mathcal{L} \cap \mathcal{D} \subset \mathcal{S}.
\end{align}

We will analyze the tail asymptotics of the response time of an arbitrary job in steady state.
For this we need some preliminary results that are stated in the lemmas below. 
The next lemma states that the minimum and the sum of two independent regularly varying random variables is again regularly varying.  

\begin{lemma}
\label{lem: sum minimum regularly varying}
Let $X$ and $Y$ be two independent regularly varying random variables with $\mathbb{P}(X>x)=L_{1}(x)x^{-\nu_{1}}$ and $\mathbb{P}(Y>x)=L_{2}(x)x^{-\nu_{2}}$. Then,
\begin{enumerate}[i)]
\item  $\min \left( X,Y \right) \in RV(-(\nu_{1}+\nu_{2}))$,
\item  $X+Y \in RV(-\min\{\nu_{1},\nu_{2}\})$.
\end{enumerate}
\end{lemma}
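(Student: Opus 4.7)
For (i), the argument is essentially a one-line calculation. By independence of $X$ and $Y$,
\begin{align*}
\mathbb{P}(\min(X,Y) > x) = \mathbb{P}(X > x)\,\mathbb{P}(Y > x) = L_1(x) L_2(x) \, x^{-(\nu_1+\nu_2)}.
\end{align*}
The only thing left to check is that $L(x) := L_1(x) L_2(x)$ is slowly varying, which is immediate from the definition: for any $\alpha > 0$,
\begin{align*}
\frac{L(\alpha x)}{L(x)} = \frac{L_1(\alpha x)}{L_1(x)} \cdot \frac{L_2(\alpha x)}{L_2(x)} \longrightarrow 1 \cdot 1 = 1 \quad \text{as } x \to \infty.
\end{align*}

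For (ii), without loss of generality assume $\nu_1 \leq \nu_2$, so $\min\{\nu_1,\nu_2\} = \nu_1$. The goal is to show $\mathbb{P}(X+Y > x) \sim L(x) x^{-\nu_1}$ for some slowly varying $L$. My plan is to combine a straightforward lower bound with an upper bound based on subexponentiality. The lower bound comes from $\{X+Y > x\} \supseteq \{\max(X,Y) > x\}$, which by inclusion-exclusion gives $\mathbb{P}(X+Y > x) \geq \mathbb{P}(X > x) + \mathbb{P}(Y > x) - \mathbb{P}(X > x)\mathbb{P}(Y > x)$. For the matching upper bound, I use the inclusion $RV \subset \mathcal{S}$ from~\eqref{eq: relation tail classes}: both $X$ and $Y$ are subexponential, and a classical result on subexponential convolutions (see e.g.~\cite{FKZ-IHTSED}) states that if $X, Y$ are independent and subexponential and $\bar{F}_Y(x)/\bar{F}_X(x) \to c \in [0,\infty)$, then $\mathbb{P}(X+Y > x) \sim (1+c)\bar{F}_X(x)$.

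To apply this, I split into two cases. When $\nu_1 < \nu_2$, I need $\mathbb{P}(Y > x)/\mathbb{P}(X > x) \to 0$; writing
\begin{align*}
\frac{\mathbb{P}(Y > x)}{\mathbb{P}(X > x)} = \frac{L_2(x)}{L_1(x)} \, x^{-(\nu_2-\nu_1)},
\end{align*}
this follows from Potter's bounds (see~\cite{BGT-RV}), which give $L_2(x)/L_1(x) = o(x^\varepsilon)$ for every $\varepsilon > 0$; choosing $\varepsilon = (\nu_2-\nu_1)/2$ yields the ratio tending to $0$. Hence $\mathbb{P}(X+Y > x) \sim \mathbb{P}(X > x) = L_1(x) x^{-\nu_1}$, so $X+Y \in RV(-\nu_1)$. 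When $\nu_1 = \nu_2$, one writes $\mathbb{P}(Y > x)/\mathbb{P}(X > x) = L_2(x)/L_1(x)$; if this ratio has a finite limit the subexponential convolution result gives the asymptotic directly, while in general one combines the lower bound above with the corresponding upper bound to conclude
\begin{align*}
\mathbb{P}(X+Y > x) \sim \bigl(L_1(x) + L_2(x)\bigr) x^{-\nu_1},
\end{align*}
and $L_1 + L_2$ is slowly varying, so again $X+Y \in RV(-\nu_1)$.

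The only mildly technical step is invoking Potter's bounds to handle the case $\nu_1 < \nu_2$ where the ratio $L_2/L_1$ need not be bounded; everything else reduces to bookkeeping about slow variation and an appeal to the cited subexponential convolution result.
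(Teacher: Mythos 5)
Your proof is correct, but note that the paper does not actually prove this lemma: it simply cites Proposition~1.5.7 of Bingham, Goldie and Teugels, so any worked argument is necessarily a different route. Part~(i) is the standard one-liner and matches what anyone would write. For part~(ii), your strategy is a bit more roundabout than the direct proof of the convolution closure of $RV$: you route the upper bound through the subexponential convolution theorem (which requires $\bar{F}_Y(x)/\bar{F}_X(x)$ to converge), and this forces a case split on whether $L_2/L_1$ has a limit. You handle $\nu_1<\nu_2$ cleanly with Potter-type estimates, but in the equal-index case with non-convergent $L_2/L_1$ you only gesture at "the corresponding upper bound" without stating it; the bound you need is
\begin{align*}
\mathbb{P}(X+Y>x)\leq\mathbb{P}\bigl(X>(1-\delta)x\bigr)+\mathbb{P}\bigl(Y>(1-\delta)x\bigr)+\mathbb{P}(X>\delta x)\,\mathbb{P}(Y>\delta x),
\end{align*}
followed by letting $\delta\downarrow 0$. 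It is worth pointing out that this direct sandwich argument (lower bound from $\{X+Y>x\}\supseteq\{X>x\}\cup\{Y>x\}$, upper bound from the three-set cover) handles all cases of $\nu_1,\nu_2$ uniformly, so it eliminates the case split and the appeal to the subexponential convolution theorem and Potter's bounds entirely; indeed, the paper uses exactly this direct argument for the $ORV$ analogue in Lemma~\ref{lem: sum o regularly varying}. Finally, a small point: the inclusion $\{X+Y>x\}\supseteq\{\max(X,Y)>x\}$ that underlies your lower bound presumes $X,Y\geq 0$; this is implicit in the application (job sizes and waiting times) but should be stated.
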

\begin{proof}
See~\cite[Proposition~1.5.7]{BGT-RV}.
\end{proof}

A similar lemma as Lemma~\ref{lem: sum minimum regularly varying} can be proved for $\mathcal{O}$-regularly varying random variables. 

\begin{lemma}
\label{lem: sum o regularly varying}
Let $X$ and $Y$ be two independent $\mathcal{O}$-regularly varying random variables with index $-\nu_{1}$ and $-\nu_{2}$, respectively, then
\begin{enumerate}[i)]
\item $\min \left(X,Y \right) \in ORV(-(\nu_{1}+\nu_{2}))$,
\item $X+Y \in ORV(-\min\{\nu_{1},\nu_{2}\})$.
\end{enumerate}
\end{lemma}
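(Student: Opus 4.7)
The plan is to mimic the proof of Lemma~\ref{lem: sum minimum regularly varying} but to replace each asymptotic equivalence of regular variation by the uniform-in-$\alpha$ two-sided bounds furnished by~\eqref{eq: definition o regularly varying}. In both parts it suffices to exhibit positive constants $c_{1}^{*},c_{2}^{*}$ such that
\[
c_{1}^{*}\alpha^{-\nu^{*}} \le \liminf_{x\to\infty}\frac{\bar{F}_{Z}(\alpha x)}{\bar{F}_{Z}(x)} \le \limsup_{x\to\infty}\frac{\bar{F}_{Z}(\alpha x)}{\bar{F}_{Z}(x)} \le c_{2}^{*}\alpha^{-\nu^{*}}
\]
for every $\alpha\ge 1$, with $Z=\min(X,Y)$ and $\nu^{*}=\nu_{1}+\nu_{2}$ in~(i), and $Z=X+Y$ and $\nu^{*}=\min\{\nu_{1},\nu_{2}\}$ in~(ii).

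Part~(i) is immediate from independence: $\bar{F}_{\min(X,Y)}(x)=\bar{F}_{X}(x)\bar{F}_{Y}(x)$, so the ratio $\bar{F}_{\min}(\alpha x)/\bar{F}_{\min}(x)$ factors into the two single-variable ratios. Both factors are strictly positive sequences with positive liminf and finite limsup, hence the liminf (resp.\ limsup) of the product is bounded below (resp.\ above) by the product of the individual liminfs (resp.\ limsups). Applying~\eqref{eq: definition o regularly varying} to each factor yields the uniform bounds $c_{1}^{X}c_{1}^{Y}\alpha^{-(\nu_{1}+\nu_{2})}$ and $c_{2}^{X}c_{2}^{Y}\alpha^{-(\nu_{1}+\nu_{2})}$.

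For~(ii), assume without loss of generality that $\nu_{1}\le \nu_{2}$ and use $X,Y\ge 0$ (as in all applications in the paper). The union bound combined with the trivial inequalities $\bar{F}_{X+Y}(x)\ge \bar{F}_{X}(x)$ and $\bar{F}_{X+Y}(x)\ge \bar{F}_{Y}(x)$, applied separately to the two summands in the numerator, yields
\[
\frac{\bar{F}_{X+Y}(\alpha x)}{\bar{F}_{X+Y}(x)} \le \frac{\bar{F}_{X}(\alpha x/2)+\bar{F}_{Y}(\alpha x/2)}{\bar{F}_{X+Y}(x)} \le \frac{\bar{F}_{X}(\alpha x/2)}{\bar{F}_{X}(x)}+\frac{\bar{F}_{Y}(\alpha x/2)}{\bar{F}_{Y}(x)}.
\]
Each ratio is handled via~\eqref{eq: definition o regularly varying} by splitting cases: for $\alpha\ge 2$ the argument $\alpha/2$ is $\ge 1$ and the bound applies directly, while for $1\le\alpha<2$ one writes $\bar{F}_{B}(x)=\bar{F}_{B}((2/\alpha)\cdot\alpha x/2)$ with $2/\alpha>1$ and inverts the liminf bound. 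In either case the limsup is at most $C_{B}\alpha^{-\nu_{i}}$, and since $\alpha\ge 1$ and $\nu_{2}\ge \nu_{1}$ the full limsup is $O(\alpha^{-\nu_{1}})$.

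For the matching lower bound, set $M(x)=\max\{\bar{F}_{X}(x),\bar{F}_{Y}(x)\}$. The union bound together with the ``$\alpha=1$'' instance of the preceding case analysis (namely $\bar{F}_{B}(x/2)\le (2^{\nu_{i}}/c_{1}^{B})\bar{F}_{B}(x)$ eventually) gives $\bar{F}_{X+Y}(x)\le 2\max\{C_{X},C_{Y}\}M(x)$, while $\bar{F}_{X+Y}(\alpha x)\ge M(\alpha x)$, so it suffices to prove $\liminf_{x\to\infty} M(\alpha x)/M(x)\ge c\alpha^{-\nu_{1}}$. If $\nu_{1}<\nu_{2}$, the finite Matuszewska indices implicit in~\eqref{eq: definition o regularly varying} force the polynomial domination $\bar{F}_{B}(x)\le Cx^{-\nu+\varepsilon}$ (see~\cite{BGT-RV}), so $\bar{F}_{Y}(x)/\bar{F}_{X}(x)\to 0$ and eventually $M\equiv \bar{F}_{X}$, reducing the claim to~\eqref{eq: definition o regularly varying} for $X$ alone; if $\nu_{1}=\nu_{2}=\nu$, applying~\eqref{eq: definition o regularly varying} separately to $\bar{F}_{X}(\alpha x)$ and $\bar{F}_{Y}(\alpha x)$ and taking the pointwise maximum yields $M(\alpha x)\ge \min\{c_{1}^{X},c_{1}^{Y}\}\alpha^{-\nu}M(x)$ eventually. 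I expect the main obstacle to be precisely this lower bound: in the equal-index case neither tail uniformly dominates, so one must argue on the envelope $M$ and track which of $\bar{F}_{X},\bar{F}_{Y}$ is larger, and in the strict case the polynomial domination must be invoked from outside~\eqref{eq: definition o regularly varying}.
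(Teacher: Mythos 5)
Your proof is correct. Part~(i) is identical to the paper's: factor $\bar{F}_{\min(X,Y)}=\bar{F}_{X}\bar{F}_{Y}$ by independence and multiply the two-sided bounds. For part~(ii) you follow the same overall template (adapt the regularly-varying convolution-closure argument by replacing asymptotic equivalences with the two-sided, $\alpha$-uniform bounds of $ORV(-\nu)$), but with two worthwhile differences. First, your upper bound uses the simpler inclusion $\{X+Y>x\}\subset\{X>x/2\}\cup\{Y>x/2\}$ instead of the paper's $\delta$-parametrized decomposition with a cross term, and you compensate by handling the $\alpha/2<1$ regime via inversion of the liminf bound; both routes land in the same place. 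Second, and more substantively, your lower bound is more complete than the paper's. The paper passes directly from $\mathbb{P}(X+Y>x)\geq\mathbb{P}(X>x)+\mathbb{P}(Y>x)-\mathbb{P}(X>x)\mathbb{P}(Y>x)$ to the asserted bound $\mathbb{P}(X+Y>\alpha x)/\mathbb{P}(X+Y>x)\geq(c_{1}/c_{2})(1-\delta)^{\nu_{1}}\alpha^{-\nu_{1}}(1+o(1))$, but that comparison mixes contributions scaled by $\alpha^{-\nu_{1}}$ and by $\alpha^{-\nu_{2}}$, and it silently requires that $\bar{F}_{Y}$ not eventually dominate $\bar{F}_{X}$. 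Your treatment resolves exactly this: when $\nu_{1}<\nu_{2}$ you invoke the polynomial envelopes forced by the Matuszewska indices associated with~\eqref{eq: definition o regularly varying} to conclude $\bar{F}_{Y}/\bar{F}_{X}\to0$, and when $\nu_{1}=\nu_{2}$ you pass to $M=\max\{\bar{F}_{X},\bar{F}_{Y}\}$ and apply the $ORV$ bound to whichever term realizes the maximum. This supplies a step that the paper's terse proof glosses over.
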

\begin{proof}
By definition of $ORV(-\nu)$ there exist $c_{i}$, $i=1,\dots,4$, such that
\begin{align*}
c_{1} \alpha^{-\nu_{1}} < \liminf_{x \rightarrow \infty} \frac{\bar{F}_{X}(\alpha x )}{\bar{F}_{X}(x)} \leq \limsup_{x \rightarrow \infty} \frac{\bar{F}_{X}(\alpha x)}{\bar{F}_{X}(x)} < c_{2} \alpha^{-\nu_{1}}, ~~~ \forall \alpha \geq 1,
\end{align*}
and
\begin{align*}
c_{3} \alpha^{-\nu_{2}} < \liminf_{x \rightarrow \infty} \frac{\bar{F}_{Y}(\alpha x )}{\bar{F}_{Y}(x)} \leq \limsup_{x \rightarrow \infty} \frac{\bar{F}_{Y}(\alpha x)}{\bar{F}_{Y}(x)} < c_{4} \alpha^{-\nu_{2}}, ~~~ \forall \alpha \geq 1.
\end{align*}
Observe that by independence we have
\begin{align*}
\mathbb{P}(\min \left(X,Y \right) > x) = \mathbb{P}(X > x)\mathbb{P}(Y > x),
\end{align*}
and therefore
\begin{align*}
c_{1}c_{3} \alpha^{-(\nu_{1}+\nu_{2})} < \liminf_{x \rightarrow \infty} \frac{\bar{F}_{\min \left(X,Y \right)}(\alpha x )}{\bar{F}_{\min \left(X,Y \right)}(x)} \leq \limsup_{x \rightarrow \infty} \frac{\bar{F}_{\min \left(X,Y \right)}(\alpha x)}{\bar{F}_{\min \left(X,Y \right)}(x)} < c_{2}c_{4} \alpha^{-(\nu_{1}+\nu_{2})}, ~~~ \forall \alpha \geq 1.
\end{align*}
The proof of i) follows by the definition of $ORV(-\nu)$. 

The proof of ii) is similar to the proof of the convolution closure for regularly varying distributions, see for example~\cite{M-RVSAP}.
Since $\{X+Y > x\} \supset \{X>x\} \cup \{Y>x\}$ it follows that
\begin{align*}
\mathbb{P}(X+Y > x) \geq  \mathbb{P}(X>x)+\mathbb{P}(Y>x) - \mathbb{P}(X > x)\mathbb{P}(Y > x).
\end{align*}
For $0 < \delta < \frac{1}{2}$, we have that
\begin{align*}
\{X+Y > x\} \subset \{X > (1-\delta)x \} \cup \{Y > (1-\delta)x \} \cup \{X> \delta x, Y > \delta x\},
\end{align*}
and therefore
\begin{align*}
\mathbb{P}(X+Y > x) \leq \mathbb{P}(X > (1-\delta)x) + \mathbb{P}(Y > (1-\delta)x) + \mathbb{P}(X > \delta x)\mathbb{P}(Y > \delta x).
\end{align*}
Now if $\nu_{1} = \min \{v_{1}, v_{2}\}$,
\begin{align*}
\frac{\mathbb{P}(X+Y > \alpha x)}{\mathbb{P}(X+Y > x)} \geq \frac{c_{1}}{c_{2}} (1-\delta)^{\nu_{1}} \alpha^{-\nu_{1}} (1+o(1)),
\end{align*}
and
\begin{align*}
\frac{\mathbb{P}(X+Y > \alpha x)}{\mathbb{P}(X+Y > x)} \leq \frac{c_{2}}{c_{1}} (1-\delta)^{-\nu_{1}} \alpha^{-\nu_{1}} (1+o(1)).
\end{align*}
The case for $\nu_{2} = \min \{v_{1}, v_{2}\}$ follows by an analogous argument. 
By definition, see~\eqref{eq: definition o regularly varying}, we get that $X+Y \in ORV(-\min\{\nu_{1},\nu_{2}\})$.
\end{proof}

Observe that these results could also be obtained by applying the principle of a single big jump, see for example~\cite{FR-SSRV}.

Next we give an auxiliary lemma which states that a random variable is $\mathcal{O}$-regularly varying with index $-\nu$ under the condition $c_{1} L(x) x^{-\nu} \leq \mathbb{P}(X > x) \leq c_{2} L(x) x^{-\nu}$. 

\begin{lemma}
\label{lem: bounds in o regularly varying}
If $c_{1} L(x) x^{-\nu} \leq \mathbb{P}(X > x) \leq c_{2} L(x) x^{-\nu}$, then $X \in ORV(-\nu)$.
\end{lemma}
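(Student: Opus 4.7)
The plan is to verify the defining two-sided inequality of $ORV(-\nu)$ directly from the hypothesis, treating it as a mechanical consequence of slow variation. Denote the constants in the hypothesis by $c_1, c_2$, reserving primed letters for the constants that will eventually appear in the definition of $ORV(-\nu)$. For any $\alpha \geq 1$ and all $x$ large enough, applying the two-sided hypothesis to the numerator with $\alpha x$ and to the denominator with $x$ yields at once
\begin{align*}
\frac{c_1}{c_2}\cdot \frac{L(\alpha x)}{L(x)}\,\alpha^{-\nu}\;\leq\;\frac{\bar{F}_{X}(\alpha x)}{\bar{F}_{X}(x)}\;\leq\;\frac{c_2}{c_1}\cdot\frac{L(\alpha x)}{L(x)}\,\alpha^{-\nu}.
\end{align*}

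Next, I would use that $L$ is slowly varying (the standing hypothesis on $L$ throughout the excerpt, inherited from the definition of regular variation), so that $L(\alpha x)/L(x)\to 1$ as $x\to\infty$ for every fixed $\alpha>0$. Passing to the $\liminf$ and $\limsup$ in the display above then gives
\begin{align*}
\liminf_{x\to\infty}\frac{\bar{F}_{X}(\alpha x)}{\bar{F}_{X}(x)}\;\geq\;\frac{c_1}{c_2}\,\alpha^{-\nu},\qquad
\limsup_{x\to\infty}\frac{\bar{F}_{X}(\alpha x)}{\bar{F}_{X}(x)}\;\leq\;\frac{c_2}{c_1}\,\alpha^{-\nu}.
\end{align*}
Choosing any positive constants $c_1' < c_1/c_2$ and $c_2' > c_2/c_1$ sharpens these weak inequalities into the strict bounds demanded by~\eqref{eq: definition o regularly varying}, so that $X\in ORV(-\nu)$ by definition.

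There is no real obstacle; the two mildly subtle points are (i) disambiguating the constants in the hypothesis from those appearing inside the definition of $ORV(-\nu)$, which is handled by allowing a small slack when relabeling, and (ii) explicitly invoking the slow variation of $L$, which is indispensable, since without it the factor $L(\alpha x)/L(x)$ need not converge and both one-sided bounds would fail.
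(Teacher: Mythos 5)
Your proof is correct and follows essentially the same route as the paper's: apply the two-sided bound to numerator and denominator, invoke the slow variation of $L$ to absorb the $L(\alpha x)/L(x)$ factor, and read off the $ORV(-\nu)$ definition. You are in fact slightly more careful than the paper, which writes the resulting $\liminf$/$\limsup$ bounds as strict inequalities directly, whereas you correctly note they are only weak and relax the constants $c_1/c_2$, $c_2/c_1$ by a small slack to match the strict form in~\eqref{eq: definition o regularly varying}.
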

\begin{proof}
From $c_{1} L(x) x^{-\nu} \leq \mathbb{P}(X > x) \leq c_{2} L(x) x^{-\nu}$ we get
\begin{align*}
\frac{c_{1}}{c_{2}} \alpha^{-\nu} < \liminf_{x \rightarrow \infty} \frac{\bar{F}_{X}(\alpha x )}{\bar{F}_{X}(x)} \leq \limsup_{x \rightarrow \infty} \frac{\bar{F}_{X}(\alpha x)}{\bar{F}_{X}(x)} < \frac{c_{2}}{c_{1}} \alpha^{-\nu}, ~~~ \forall \alpha \geq 1.
\end{align*}
The proof follows by definition of $ORV(-\nu)$.
\end{proof}

\section{Proof of the upper bounds in Theorem~\ref{thm: tail waiting time cos fcfs}}
\label{app sec: proof of theorem cos fcfs}
In this appendix we will prove the upper bounds~\eqref{eq: thm upperbound waiting time cos fcfs low load} and~\eqref{eq: thm upperbound waiting time cos fcfs high load} on the tail of the waiting time for the c.o.s.\ variant of redundancy-$d$ with the FCFS discipline.
Our proof is based on the proof in~\cite{FK-LDMSHV} for the $GI/G/N$ queue, which corresponds to a system of $N$ queues with the JSW-$N$ policy. 
While the JSW-$d$ policy with $1 \leq d \leq N$ requires essential and sometimes subtle adaptations, overall we follow the main line of reasoning of~\cite{FK-LDMSHV} and indicate for each lemma and theorem its counterpart in~\cite{FK-LDMSHV}.

In most heavy-tail results in queueing theory, the interarrival time distribution does not have an effect on the waiting time tail behavior. With this in mind, similar to~\cite{FK-HTMS,FK-LDMSHV}, we first consider deterministic interarrival times, making the derivations more tractable, and thereafter prove Lemma~\ref{app lem: lemma 1} which allows us to extend the proof for deterministic interarrival times to generally distributed interarrival times.
The idea behind the proof of the upper bounds is that we compare the system with the JSW-$d$ policy to $N$ auxiliary single-server queueing systems which work in parallel. 

Let $\boldsymbol{s}^{n}$ denote the vector of $d$ servers which are sampled for the $n$th job. For $n=1,2,\dots$, let $\boldsymbol{V}_{n}=(V_{n1},\dots,V_{nN})$ be the vector of residual workloads at the arrival epoch of the $n$th job. The waiting time that the $n$th job experiences is $W_{\mathrm{min},n} := \min \{V_{nj},j \in \boldsymbol{s}^{n}\}$ and it joins server $i_{n}$, where $i_{n} = \min\{ i \in \boldsymbol{s}^{n} : V_{ni} = W_{\mathrm{min},n} \}$.
Also,
\begin{align*}
V_{n+1,i} =
\begin{cases}
(V_{ni}+b_{n}-a_{n+1})^{+} & \text{if } i = i_{n}, \\
(V_{ni}-a_{n+1})^{+} & \text{if } i \neq i_{n},
\end{cases}
\end{align*}
with job sizes $b_{n}$ and interarrival times $a_{n}$
Let $R(\boldsymbol{x}) = (R_{1}(\boldsymbol{x}),\dots,R_{N}(\boldsymbol{x}))$ be the operator on $\mathbb{R}^{N}$ that orders the coordinates of $\boldsymbol{x} \in \mathbb{R}^{N}$ in nondecreasing order, i.e., $R_{1}(\boldsymbol{x})\leq \dots \leq R_{N}(\boldsymbol{x})$. Moreover, let $f_{R}: \mathbb{R} \rightarrow \mathbb{R}$ be the function that maps the server number to the number ordered by workload as in the operator $R(\cdot)$. For $n=1,2,\dots$, put $\boldsymbol{D}_{n} = R(\boldsymbol{V}_{n})$. Then $W_{\mathrm{min},n} = D_{ni}$, where $i = f_{R}(i_{n})$ and similar to the Kiefer-Wolfowitz recursion for the JSW policy, we get
\begin{align}
\label{eq: kiefer wolfowitz jsw-d}
\boldsymbol{D}_{n+1} = R((D_{n1}-a_{n+1})^{+},\dots,(D_{ni}+b_{n}-a_{n+1})^{+},\dots,(D_{nN}-a_{n+1})^{+}).
\end{align}
Observe that the operator $R(\cdot)$ is monotone, thus the sequence $\boldsymbol{D}_{n}$ satisfying Equation~\eqref{eq: kiefer wolfowitz jsw-d} satisfies the two monotonicity properties of Lemma 4.1 in~\cite{FK-LDMSHV} as well.

Hereafter we continue to assume deterministic interarrival times $a' \equiv \mathbb{E}[A]$ for both the original system and the auxiliary systems introduced below.
Consider $N$ auxiliary $D/G/1$ queueing systems which work in parallel. At every time instant $T_{n}$, $n=1,2,\dots$, a batch of $N$ jobs arrives, one job per server. 
Denote by $U_{ni}$, $i=1,\dots,N$, the waiting times in the $i$th $D/G/1$ queue and let $b_{ni}$, $n \geq 1$ and $i=1,\dots,N$, be independent random variables with common distribution $B$.
We couple the job sizes of the $D/G/N$ redundancy-$d$ system with job sizes at the $N$ auxiliary $D/G/1$ queues: we let $b_{n} = b_{n,i_{n}}$, where $i_{n} = \min \{ i \in \boldsymbol{s}^{n}: V_{ni} = W_{\mathrm{min},n} \}$ as defined earlier. 
The deterministic interarrival times are $T_{n} = n (k+1) (a' - h)$, with
\begin{align}
\label{app eq: equation h upperbound}
\frac{k}{k+1} \left( a' - \frac{\mathbb{E}[B]}{k+1} \right) < h < a' - \frac{\mathbb{E}[B]}{k+1},
\end{align} 
so that the auxiliary queueing systems are stable.   
%Observe that for $\frac{N}{d} < \tilde{\rho} < N$ the relation $V_{ni} \leq U_{ni}$, for $i=1,\dots,N$, in Lemma~\ref{app lem: upperbound tail waiting time} may not be true anymore. 
%Nevertheless,

In Lemma~\ref{app lem: lemma 6.2} we upper bound the sum of waiting times by the sum of waiting times in the auxiliary $D/G/1$ queues and a light-tailed random variable. The proof of Lemma~\ref{app lem: lemma 6.2} uses the auxiliary Lemma~\ref{app lem: lemma 4.3} that provides an upper bound on the expected difference of the total workload at all the servers seen by the first and $(s+1)$th job when the workload at one of the servers is large. Note that the choice of this large workload is different from that used in~\cite{FK-LDMSHV}.

\begin{lemma}[Counterpart of Lemma~4.3 in~\cite{FK-LDMSHV}]
\label{app lem: lemma 4.3}
Consider a system with $k+1$ servers and assume $\mathbb{E}[B] > k a'$. For any $\epsilon > 0$, there exist $V_{\mathrm{large}} < \infty$ and an integer $s \geq 1$ such that, for any initial value $\boldsymbol{D}_{1}$ with $D_{1,k+1} \geq V_{\mathrm{large}}$,
\begin{align*}
\mathbb{E}\left[ \sum_{j=1}^{k+1} D_{1+s,j} - \sum_{j=1}^{k+1} D_{1j} \right] \leq s(\mathbb{E}[B] - (k+1)a' + \epsilon).
\end{align*}
\end{lemma}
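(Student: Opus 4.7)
The plan is to decompose the change in $\sum_{j=1}^{k+1} D_{n,j}$ via the conservation law built into recursion~\eqref{eq: kiefer wolfowitz jsw-d}: each arrival adds exactly $b_n$ to the total workload while each of the $k+1$ servers removes at most $a'$ per interarrival, the deficit being the ``lost service'' $L_n\ge 0$ at servers whose workload would otherwise have gone negative. Summing the one-step increments over $n=1,\dots,s$ and taking expectations yields
\begin{align*}
\mathbb{E}\left[\sum_{j=1}^{k+1} D_{1+s,j}-\sum_{j=1}^{k+1} D_{1,j}\right]=s\mathbb{E}[B]-s(k+1)a'+\mathbb{E}\left[\sum_{n=1}^{s}L_n\right],
\end{align*}
so the lemma reduces to showing $\mathbb{E}\bigl[\sum_{n=1}^{s}L_n\bigr]\le s\epsilon$.

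Next I would first pick $s$ large and then $V_{\mathrm{large}}\gg s a'$. The server carrying the initial workload $D_{1,k+1}\ge V_{\mathrm{large}}$ therefore stays busy throughout the interval $[1,1+s]$, contributing nothing to $L_n$; moreover, it retains the largest workload with high probability, so any JSW-$d$ sample containing it also contains a strictly smaller server and the arrival is routed elsewhere. Thus, up to a low-probability ``bad event'' caused by an atypically large job size, all $s$ arrivals are fed to the remaining $k$ servers. Those $k$ servers constitute an overloaded sub-system: total arrival rate $\mathbb{E}[B]/a' > k$ against $k$ unit-speed servers, so under the JSW-$d$ routing restricted to these $k$ servers the time-averaged load on each is at least $\mathbb{E}[B]/(ka')>1$.

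The core step is to turn this positive drift into an upper bound on the expected total idle time at the $k$ smaller servers. I would couple each such server's workload from below with a single-server queue fed by the subsequence of arrivals it actually receives; since each resulting sub-queue has traffic intensity strictly above one, standard reflected random-walk theory gives that the expected total time its workload spends below $a'$ is bounded by a constant independent of $s$. Summing over the $k$ smaller servers, $\mathbb{E}[\sum_{n=1}^{s}L_n]=O(1)$, which is $\le s\epsilon$ once $s$ is taken sufficiently large. The trivial bound $L_n\le (k+1)a'$ absorbs the bad event from the previous paragraph after $V_{\mathrm{large}}$ is increased further to make its probability of order $\epsilon/s$.

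The main obstacle is controlling the coupling in the previous paragraph rigorously, because the workloads at the $k$ smaller servers under JSW-$d$ are dependent: arrivals are routed according to the joint state rather than being split into independent streams. The cleanest workaround, following the spirit of Lemma~4.3 in~\cite{FK-LDMSHV}, is to work directly with the total workload on the $k$-server sub-system as a single reflected random walk with positive drift $\mathbb{E}[B]-ka'$, and to translate a lower bound on the total workload into a bound on the aggregate idle time via a pigeonhole argument exploiting the load-balancing tendency of JSW-$d$.
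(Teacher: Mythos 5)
Your proposal takes essentially the same route as the paper: with one server carrying a huge initial workload, the remaining $k$ servers behave like an overloaded system with positive drift $\mathbb{E}[B]-ka'$, the large server drains at rate $a'$, and the problem reduces to showing the cumulative idle (``lost service'') time is $o(s)$. Your conservation-law decomposition $\mathbb{E}[\sum_j D_{1+s,j}-\sum_j D_{1j}]=s\mathbb{E}[B]-s(k+1)a'+\mathbb{E}[\sum_n L_n]$ is an equivalent reframing of the paper's coupling identity, and your ``cleanest workaround'' at the end (treat the aggregate workload of the $k$-server subsystem as a drifting walk) is exactly what the paper does, by coupling to an auxiliary unstable $GI/G/k$ system started from zero and bounding $\mathbb{E}[\sum_{i=1}^k\hat D_{1+s,i}]\le s(\mathbb{E}[B]-ka'+\epsilon/2)$.

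There are two places where the proposal is thin. First, the per-server coupling in your core step is not merely hard, it is the wrong object: the ``subsequence of arrivals a server actually receives'' is state-dependent under JSW-$d$, so it is not a legitimate renewal input, and individual sub-queues need not be overloaded (one server could be starved while others get most arrivals). You flag this and retreat to the aggregate-workload argument, but that argument is left as a ``pigeonhole'' sketch; the paper instead introduces the auxiliary $GI/G/k$ system \emph{with a modified sampling rule} (sample $d-1$ servers with probability $\tfrac{d}{k+1}$, else $d$) so that the subsystem dynamics coincide with the original $k$ small servers as long as no job is routed to server $k+1$, after which a standard bound for unstable $GI/G/k$ queues applies. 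Without some such explicit construction, the step ``aggregate idle time is $O(1)\le s\epsilon$'' is unsupported. Second, your claim that ``any JSW-$d$ sample containing [the large server] also contains a strictly smaller server'' is only a high-probability statement, and the paper explicitly flags this as the place where the JSW-$d$ case departs from JSW-$N$: for $d<k+1$, once the unstable subsystem has grown, a job can be routed to server $k+1$ while an \emph{unsampled} server in $\{1,\dots,k\}$ is still empty, which does not arise under full JSW and forces the paper to replace $V_{\mathrm{large}}=(s+1)C$ by the more delicate threshold $V_{\mathrm{large}}^*$. Your expectation-based bad-event absorption can in principle sidestep the a.s.\ coupling issue, but your bound $L_n\le (k+1)a'$ on bad steps is not enough by itself: a diverted job permanently reduces the drift of the $k$-server subsystem, so you must also argue that the \emph{number} of diverted jobs is small enough to preserve overload, which is an extra step you do not spell out.
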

\begin{proof} 
By property (2) of Lemma~4.1 in~\cite{FK-LDMSHV}, it is enough to prove the result for initial values $D_{11} = \dots = D_{1k} = 0$, $D_{1,k+1} = V_{\mathrm{large}}$ only.
Choose $C$ such that $\mathbb{E}[\min \{a', C\}] \geq a' - \epsilon/2$. By property (1) of Lemma~4.1 in~\cite{FK-LDMSHV}, we may prove the lemma with interarrival times $\min\{a', C\}$ instead of $a'$.

For $d \geq k+1$ the proof follows from Lemma~4.3 in~\cite{FK-LDMSHV}, since the JSW-$d$ and JSW policy are equivalent in the system with $k+1$ servers. 
For $d < k+1$, consider an auxiliary unstable $GI/G/k$ system with initial value $\boldsymbol{\hat{D}}_{1}=0$ and find $s$ such that $\mathbb{E}\left[\sum_{i=1}^{k} \hat{D}_{1+s,i}\right] \leq s(\mathbb{E}[B]-k a' + \epsilon/2)$. Note that this system samples $d-1$ servers with probability $\frac{d}{k+1}$, i.e., the probability that server $k+1$ is sampled in the original system, and $d$ servers with probability $1-\frac{d}{k+1}$, i.e., the probability that server $k+1$ is not sampled in the original system. For an unstable system with workload vector $\hat{\boldsymbol{D}}_{n}$ we have that $\hat{D}_{ni} \rightarrow \infty$ as $n \rightarrow \infty$ for $i=1,\dots,k$. 

Take $V_{\mathrm{large}}=\max\{(s+1)C,V_{\mathrm{large}}^{*}\}$, where $V_{\mathrm{large}}^{*}$ is defined as follows. Consider the system with initial values $D_{11} = \dots = D_{1k} = 0$, $D_{1,k+1} = V_{\mathrm{large}}^{*}$ and let the $n^{*}$th job be the first job that is assigned to the $(k+1)$th server, i.e., $n^{*} :=\min\{n \geq 1 : i_{n}=k+1\}$ which clearly depends on the initial workload $V_{\mathrm{large}}^{*}$. Then take $V_{\mathrm{large}}^{*}$ such that
\begin{align*}
\min_{i=1,\dots,k+1} D_{n^{*}i} \geq \left(s+1-n^{*}\right)C.
\end{align*}
Note that such $V_{\mathrm{large}}^{*}$ exists, since increasing $V_{\mathrm{large}}^{*}$ leads, loosely speaking, to increasing workloads at the other $k$ servers as well (because they are unstable). 
This definition of $V_{\mathrm{large}}^{*}$ ensures that the first time a job is allocated to server $k+1$ the workload at the other servers is large enough so that, without any additional work, these servers are not empty before the $(s+1)$th job.
We cannot simply take $V_{\mathrm{large}}=(s+1)C$ as in~\cite{FK-LDMSHV}, because this does not guarantee that $D_{ni} > 0$ and $\hat{D}_{ni} > 0$, for all $n \in [\min\{s+1, n^{*}\},s+1]$ and $i=1,\dots,k$, which is needed in the proof. Indeed, without additional constraints on $V_{\mathrm{large}}$ it may be that the job is allocated to the $(k+1)$th server, which has the smallest workload out of the $d$ sampled servers, while at least one of the other $k+1-d$ servers is empty.

By the exact same steps as in Lemma 4.3 in~\cite{FK-LDMSHV} we can prove that
\begin{align}
\label{eq: equality unstable system}
\sum_{j=1}^{k+1} D_{1+s,j} - \sum_{j=1}^{k+1} D_{1,j} &= \sum_{j=1}^{k} \hat{D}_{1+s,j} - \sum_{j=1}^{s} \min\{a',C\} \nonumber \\
& \leq s(\mathbb{E}[B] - k a' + \frac{\epsilon}{2}) - s(a' - \frac{\epsilon}{2})~~~ \text{a.s.},
\end{align}
and the result follows.
\begin{comment}
Consider vectors $\boldsymbol{V}_{n}$ and numbers $i_{n}$ as defined above, with initial values $V_{11}=\dots=V_{1,N-1}=0$ and $V_{1N} = V_{\mathrm{large}}$. Note that $V_{nN} \geq V_{\mathrm{large}} - (n-1)C > 0$, for all $n=1,2,\dots,s+1$. Let $z=\min\{s+1, \min\{n \geq 1, i_{n} = N\}\}$. Then $R(V_{z1}, \dots, V_{z,N-1}) = (\hat{D}_{z1}, \dots, \hat{D}_{z,N-1})$ and
\begin{align}
\label{eq: auxiliary unstable queue}
\sum_{i=1}^{N} D_{zi} = \sum_{i=1}^{N} V_{zi} = \sum_{i=1}^{N-1} V_{zi} + V_{zN} = \sum_{i=1}^{N-1} \hat{D}_{zi} + V_{\mathrm{large}} - \sum_{j=1}^{z-1} \min\{\tau_{j},C\}.
\end{align}
This ends the proof of~\eqref{eq: equality unstable system} if $z=s+1$. In the case $z < s+1$, we may conclude that 
\begin{align*}
0 < V_{\mathrm{large}} - (z-1)C \leq V_{zN}.
\end{align*}
By definition of $V_{\mathrm{large}}$ we have that $D_{ni} > 0$ and $\hat{D}_{ni} > 0$, for all $z \leq n \leq s+1$ and $i=1,\dots,N-1$. 
Then, from~\eqref{eq: auxiliary unstable queue},
\begin{align*}
\sum_{i=1}^{N} D_{s+1,i} &= \sum_{i=1}^{N} D_{zi} + \sum_{j=z}^{s} (b_{j} - N \min\{\tau_{j},C\}) \\
&= \sum_{i=1}^{N-1} \hat{D}_{zi} + \sum_{j=z}^{s} (b_{j} - (N-1) \min\{\tau_{j},C\}) + V_{\mathrm{large}} - \sum_{j=1}^{s} \min\{\tau_{j},C\},
\end{align*}
which coincides again with the right hand side of~\eqref{eq: equality unstable system}. 
%{\color{red}(For the last statement to hold we should have that $D_{n,i} > 0$ and $\hat{D}_{n,i} > 0$ for all $z \leq n \leq s+1$ and $i=1,\dots,N$. Although this is very likely, theoretically it may happen that the same servers are sampled for all the $d+1$ jobs violating the condition. That is why we need the particular choice of $A$.)}
\end{comment}
\end{proof}

\begin{lemma}[Counterpart of Lemma 6.2 in~\cite{FK-LDMSHV}]
\label{app lem: lemma 6.2}
There exists $\beta > 0$ such that, for any set of $k+1$ indices $I=\{i(1),\dots,i(k+1)\}$, there is a random variable $\eta_{I}$ such that $\mathbb{E}[e^{\beta \eta_{I}}] < \infty$ and, for any $n$, with probability~$1$,
\begin{align}
\label{eq: bound workload auxiliary system light tailed}
\sum_{i \in I} V_{ni} \leq \sum_{i \in I} U_{ni} + \eta_{I}.
\end{align}
\end{lemma}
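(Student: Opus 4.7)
The plan is to set
\[
\eta_I := \sup_{n \geq 1} \Bigl( \sum_{i \in I} V_{ni} - \sum_{i \in I} U_{ni} \Bigr)^{+},
\]
so that~\eqref{eq: bound workload auxiliary system light tailed} holds automatically. The substantive task is then to show that $\eta_I$ has a finite exponential moment of some order $\beta > 0$, which I would accomplish via a drift argument relying on Lemma~\ref{app lem: lemma 4.3} together with the stability of the auxiliary $D/G/1$ queues.

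The first step is to dominate $\sum_{i \in I} V_{ni}$ by the total workload $\tilde{S}_n$ of an auxiliary $(k+1)$-server JSW-$d$ system fed at the original arrival epochs $n a'$ with exactly the job sizes $b_n$, so that all arrivals are forced into servers in $I$. By the standard monotonicity of queueing workloads under enlarging the set of accepted jobs (equivalently, under restricting the server pool), this domination holds pathwise. Applying Lemma~\ref{app lem: lemma 4.3} to this $(k+1)$-server system, whenever its maximum coordinate $\tilde{D}_{n, k+1}$ exceeds the threshold $V_{\mathrm{large}}$, the expected $s$-step increment of $\tilde{S}_n$ is at most $s(\mathbb{E}[B] - (k+1)a' + \epsilon)$. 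The upper bound $h < a' - \mathbb{E}[B]/(k+1)$ in~\eqref{app eq: equation h upperbound} ensures this drift is strictly negative, so $\sum_{i \in I} V_{ni}$ inherits a negative $s$-step drift whenever it is sufficiently large.

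On the $U$-side, each coordinate of $\boldsymbol{U}_n$ is a stable $D/G/1$ workload (the lower bound on $h$ in~\eqref{app eq: equation h upperbound} guarantees $\mathbb{E}[B] < (k+1)(a'-h)$), so $\sum_{i \in I} U_{ni}$ has a proper stationary distribution with a finite mean, and its drift contribution is bounded. Combining the two drift estimates, the difference $D_n := \sum_{i \in I} V_{ni} - \sum_{i \in I} U_{ni}$ has uniformly negative expected drift whenever $D_n$ is large.

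The main obstacle, and the reason the result is not trivial for heavy-tailed job sizes, is controlling the positive jumps of $D_n$. Here the coupling $b_n = b_{n, i_n}$ is essential: whenever a large job of size $b_n$ is injected into the $V$-system at server $i_n \in I$, the same job is simultaneously injected into $U_{n, i_n}$ via the coupling, so the potentially heavy-tailed contributions cancel in $D_n$. If instead $i_n \notin I$, then no job is added to $V$ at servers in $I$, while $U$ may still receive independent jobs $b_{n, j}$, $j \in I$, which can only decrease $D_n$. The net positive jumps of $D_n$ are therefore bounded by a deterministic constant reflecting the mismatch of the two arrival grids $\{n a'\}$ and $\{n(k+1)(a'-h)\}$ and the per-step service contributions. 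Given this light-tailed (in fact bounded) jump structure together with the negative drift when $D_n$ is large, a standard Borovkov-type exponential Lyapunov argument (see \cite{FKZ-IHTSED}) yields $\mathbb{E}[e^{\beta \eta_I}] < \infty$ for some $\beta > 0$. The hardest part will be making the coupling bookkeeping fully precise across the mismatched discrete-time skeletons of the two systems and verifying the positive-jump bound uniformly in the workload state.
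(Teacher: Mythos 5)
Your high-level plan --- define $\eta_I$ as the running supremum of the workload difference, dominate $\sum_{i\in I}V_{ni}$ by an auxiliary $(k+1)$-server process, get a negative drift when large from Lemma~\ref{app lem: lemma 4.3}, and control the positive jumps via the coupling $b_n=b_{n,i_n}$ --- mirrors the paper's strategy, and you correctly flag the jump control as the crux. The gap sits precisely there, in your choice of job sizes for the auxiliary. You feed $\tilde{S}$ with ``exactly the job sizes $b_n$'', forcing every arrival into a server in $I$. But on a step with $i_n\notin I$, the forced job $b_n=b_{n,i_n}$ is \emph{independent} of the batch $\{b_{nj}:j\in I\}$ that the $U$-queues indexed by $I$ receive at that step. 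The one-step increment of $\tilde{S}_n-\sum_{i\in I}U_{ni}$ is then at most $b_n-\sum_{j\in I}b_{nj}+(k+1)^2(a'-h)$, which is heavy-tailed on the positive side rather than deterministically bounded. Your remark that ``$U$ may still receive independent jobs \dots which can only decrease $D_n$'' is true for the \emph{original} $V$ (which adds nothing to servers in $I$ when $i_n\notin I$), not for your $\tilde{S}$ (which still injects $b_n$ there); you cannot draw the drift estimate from $\tilde{S}$ while taking the jump bound from $V$, since they are different processes.

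The paper resolves exactly this difficulty by modifying the auxiliary job sizes: fix $i^*\in I$ and set $b_n^*:=b_{n,i_n}$ if $i_n\in I$, $b_n^*:=b_{n,i^*}$ if $i_n\notin I$, routed to $i_n$ or $i^*$ respectively. The domination $\sum_{i\in I}V_{ni}\le\sum_{i\in I}V^*_{ni}$ still holds by Lemma~4.1(1) of~\cite{FK-LDMSHV}, and now $b_n^*\in\{b_{nj}:j\in I\}$ at every step, so $b_n^*-\sum_{j\in I}b_{nj}\le 0$ and the increment of $\sum_I V^*-\sum_I U$ is bounded above by the deterministic constant $(k+1)^2(a'-h)$; the heavy-tailed part cancels. (The case $d\ge k+1$, where JSW-$d$ on $k+1$ servers coincides with JSW, reduces directly to~\cite{FK-LDMSHV} without modification.) One further small slip: you attribute the negative drift of the $(k+1)$-server auxiliary to the upper bound on $h$ in~\eqref{app eq: equation h upperbound} and $U$-stability to the lower bound; it is the reverse. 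The upper bound $h<a'-\mathbb{E}[B]/(k+1)$ gives $\mathbb{E}[B]<(k+1)(a'-h)$ and hence $U$-stability, the drift in Lemma~\ref{app lem: lemma 4.3} is negative simply because $\tilde{\rho}<k+1$, and the lower bound on $h$ is what makes the mean $U$-side drift strictly exceed the $V^*$-side drift in the comparison that closes the Lyapunov argument.
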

\begin{proof}
Fix some $i^{*} \in I$. Observe that for $d \geq k+1$ the proof directly follows from Lemma 6.2 in~\cite{FK-LDMSHV}, since the JSW-$d$ and JSW policy are equivalent in the system with $k+1$ servers. For $d < k+1$, consider an auxiliary $GI/G/(k+1)$ redundancy-$d$ system as in Lemma~\ref{app lem: lemma 4.3} with workloads $V^{*}_{n}=(V^{*}_{ni}, i \in I)$ with the same interarrival times equal to $a'$, but whose service times $b^{*}_{n}$ are chosen in a special manner. At any time $n$, if $i_{n} \in I$, then put $b^{*}_{n} = b_{n,i_{n}}$ and $i^{*}_{n} = i_{n}$. If $i^{*}_{n} \notin I$, then put $b^{*}_{n} = b_{n,i^{*}}$ and $i^{*}_{n} = i^{*}$. Applying property (1) of Lemma~4.1 in~\cite{FK-LDMSHV}, we get that $R(V_{ni}, i \in I) \leq R(\boldsymbol{V}^{*}_{n})$ coordinate-wise, for any $n$. 
Therefore,
\begin{align*}
\sum_{i \in I} V_{ni} \leq \sum_{i \in I} V^{*}_{ni}.
\end{align*}
By the exact same steps as in Lemma 6.2 in~\cite{FK-LDMSHV} using Lemma~\ref{app lem: lemma 4.3} (the counterpart of Lemma~4.3 in~\cite{FK-LDMSHV}) we can prove Equation~\eqref{eq: bound workload auxiliary system light tailed}.
\end{proof}

Just like a crucial step in~\cite{FK-LDMSHV}, Lemma~\ref{app lem: lemma 6.2} can be used to upper bound the waiting time in the $N$-server system by the waiting time in the corresponding system with \textit{deterministic} interarrival times minus a negligible term. Note that the upper bounds are not as sharp as in~\cite{FK-LDMSHV} since, unlike~\cite{FK-LDMSHV},
$W_{\mathrm{min},n} \nleq \frac{1}{k+1} \sum_{i \in I} V_{ni}$ for every collection $I$. 

\begin{lemma}[Counterpart of Lemma 6.1 in~\cite{FK-LDMSHV}]
\label{app lem: lemma 6.1}
There exists a number $\beta > 0$ and a random variable $\eta$ such that $\mathbb{E}[e^{\beta \eta}] < \infty$ and, for all $n$, with probability~$1$\\
i) if $k \geq N-d$,
\begin{align*}
W_{\mathrm{min},n} \leq \frac{k+1}{k+1-N+d}U_{n,(k+1)} + \eta,
\end{align*}
where $U_{n,(k+1)}$ is the $(k+1)$th order statistic of vector $(U_{n1}, \dots, U_{nN})$,\\
ii) if $k \leq N-d$,
\begin{align*}
W_{\mathrm{min},n} \leq (k+1)U_{n,(N-d+1)} + \eta.
\end{align*}
\end{lemma}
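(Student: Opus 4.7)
The plan is to apply Lemma~\ref{app lem: lemma 6.2} to carefully chosen subsets $I$ of size $k+1$, using the inequality $\sum_{i\in I} V_{ni}\leq \sum_{i\in I} U_{ni}+\eta_I$ to translate a sum-bound on the auxiliary waiting times $U_{ni}$ into a pointwise bound on $W_{\mathrm{min},n}$. In both cases $I$ will be engineered so that $\sum_{i\in I}U_{ni}$ is controlled by the order statistic appearing on the right-hand side of the claim. The single light-tailed $\eta$ required by the statement is then obtained by taking a (suitably rescaled) maximum of $\eta_I$ over all $\binom{N}{k+1}$ subsets of size $k+1$; since this is a finite family and each $\eta_I$ has a moment generating function that is finite in a neighborhood of the origin, the same holds for the maximum.

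For case (i), with $k\geq N-d$, I exploit the pigeonhole bound $|I\cap \boldsymbol{s}^n|\geq (k+1)+d-N\geq 1$, which is valid for every $I$ of size $k+1$. Since $V_{nj}\geq W_{\mathrm{min},n}$ for every $j\in \boldsymbol{s}^n$, this yields
$(k+1+d-N)\,W_{\mathrm{min},n}\leq \sum_{i\in I\cap \boldsymbol{s}^n}V_{ni}\leq \sum_{i\in I}V_{ni}$.
Choosing $I$ to be the indices of the $k+1$ smallest $U_{ni}$ gives $\sum_{i\in I}U_{ni}\leq (k+1)\,U_{n,(k+1)}$, and Lemma~\ref{app lem: lemma 6.2} followed by division by $k+1+d-N$ delivers the claimed bound.

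For case (ii), with $k\leq N-d$, the previous lower bound on $|I\cap \boldsymbol{s}^n|$ degenerates, so the argument above breaks down. The central new idea is to build $I$ around a sampled server whose $U$-value is small. Since $|\boldsymbol{s}^n|+|\{j:U_{nj}\leq U_{n,(N-d+1)}\}|\geq d+(N-d+1)>N$, a pigeonhole argument guarantees the existence of $j^*\in \boldsymbol{s}^n$ with $U_{n,j^*}\leq U_{n,(N-d+1)}$. I then take $I=\{j^*\}\cup J$, where $J$ consists of the $k$ servers with smallest $U_{ni}$ in $\{1,\dots,N\}\setminus\{j^*\}$. Using $k+1\leq N-d+1$, each element of $J$ satisfies $U_{ni}\leq U_{n,(k+1)}\leq U_{n,(N-d+1)}$, so $\sum_{i\in I}U_{ni}\leq (k+1)\,U_{n,(N-d+1)}$. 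Combining Lemma~\ref{app lem: lemma 6.2} with $W_{\mathrm{min},n}\leq V_{n,j^*}\leq \sum_{i\in I}V_{ni}$ (valid because $j^*\in \boldsymbol{s}^n$ and $V\geq 0$) then closes the argument.

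The main obstacle is the construction of $I$ in case (ii): we need $I$ simultaneously to contain a witness in $\boldsymbol{s}^n$ (to tie $\sum_{i\in I}V_{ni}$ back to $W_{\mathrm{min},n}$) and to have every $U$-coordinate bounded by $U_{n,(N-d+1)}$. The pigeonhole inequality above is exactly what decouples these two demands; once $j^*$ has been identified in this way, the remaining $k$ elements of $I$ can be selected purely from the $U$-ordering, and the rest of the argument is routine bookkeeping.
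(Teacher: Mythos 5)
Your proof is correct and follows essentially the same route as the paper's: case~(i) via $|I\cap\boldsymbol{s}^n|\ge k+1+d-N$ with $I$ the $k+1$ servers of smallest $U$-value, and case~(ii) by anchoring $I$ at a sampled server whose $U$-value is at most $U_{n,(N-d+1)}$ before filling in the $k$ smallest remaining $U$-values. Your pigeonhole argument for locating $j^*$ in case~(ii) is a mild rephrasing of the paper's choice $\tilde{i}_n=\arg\min_{i\in\boldsymbol{s}^n}U_{ni}$ (which is a specific witness for your existence claim), and the handling of $\eta$ via a rescaled finite maximum of the $\eta_I$ matches the paper.
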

\begin{proof}
i) For $k \geq N-d$ we have for every collection $I$ of $k+1$ coordinates,
\begin{align}
\label{eq: upperbound waiting time cos}
W_{\mathrm{min},n} \leq \frac{1}{k+1-N+d} \sum_{i \in I} V_{ni},
\end{align}
since $W_{\mathrm{min},n}$ is no larger than the $(N-d+1)$th smallest value of $V_{ni}$, $i \in I$. 
Then it follows from Lemma~\ref{app lem: lemma 6.2} that
\begin{align}
\label{eq: upperbound waiting time auxiliary cos}
W_{\mathrm{min},n} \leq \frac{1}{k+1-N+d} \sum_{i \in I} U_{ni} + \eta,
\end{align}
where $\eta := \max_{I : |I|=k+1} \eta_{I}$.
Take $I$ such that $\{U_{ni}, i \in I\}$ are the $k+1$ smallest coordinates of the vector $(U_{n1},\dots,U_{nN})$. Then $U_{ni} \leq U_{n,(k+1)}$ for every $i \in I$.

ii) For $k \leq N-d$ we take the collection $I$ of $k+1$ coordinates such that
$\tilde{i}_{n} = \text{arg}\min_{i}\{U_{ni} : i \in \boldsymbol{s}^{n}\} \in I$. Hence, $I \cap \boldsymbol{s}^{n} \neq \emptyset$ and again Equations~\eqref{eq: upperbound waiting time cos} and~\eqref{eq: upperbound waiting time auxiliary cos} hold. 
Take the remaining coordinates of $I$ such that $\{U_{ni}, i \in I \setminus \tilde{i}_{n}\}$ are the $k$ smallest coordinates of the vector $(U_{n1},\dots,U_{n,\tilde{i}_{n}-1},U_{n,\tilde{i}_{n}+1},\dots,U_{nN})$. Then $U_{ni} \leq U_{n,(N-d+1)}$ for every $i \in I$. Indeed, in the worst case $\{U_{ni} : i \in \boldsymbol{s}^{n}\}$ are the $d$ largest coordinates of the vector $(U_{n1},\dots,U_{nN})$, but $\tilde{i}_{n}$ is defined as the argument that achieves the minimum of the set $\{U_{ni} : i \in \boldsymbol{s}^{n}\}$ from which it follows that $U_{n\tilde{i}_{n}} \leq U_{n,(N-d+1)}$.
\end{proof}

\begin{theorem}[Analogous to Theorem 7.1 in~\cite{FK-LDMSHV}]
\label{app thm: upperbound tail waiting time}
Let $\tilde{\rho}<k+1$ for some $k \in \{0,\dots,N-1\}$. Then for any fixed $h$ satisfying Equation~\eqref{app eq: equation h upperbound} there exists $\beta > 0$ such that\\
i) if $k \geq N-d$,
\begin{align*}
\mathbb{P}(W_{\mathrm{min}} > x+y) \leq {N \choose k} \left(\bar{F}_{M_{\mathrm{rw}}}\left(\frac{(k+1-N+d)x}{k+1}\right)\right)^{N-k} + \text{const} \cdot e^{-\beta y},
\end{align*} 
for all $x,y > 0$,\\
ii) if $k \leq N-d$,
\begin{align*}
\mathbb{P}(W_{\mathrm{min}} > x+y) \leq {N \choose d} \left(\bar{F}_{M_{\mathrm{rw}}}\left(\frac{x}{k+1} \right) \right)^{d} + \text{const} \cdot e^{-\beta y}, 
\end{align*}
for all $x,y > 0$, where $F_{M_{\mathrm{rw}}}$ is the cumulative distribution function of the random variable
\begin{align*}
M_{\mathrm{rw}} := \sup_{n \geq 1} \left\{ 0, \sum_{j=1}^{n}(b_{j}-(k+1)(a'-h))\right\}.
\end{align*}
\end{theorem}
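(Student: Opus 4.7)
The plan is to combine Lemma~\ref{app lem: lemma 6.1} with a union-bound estimate for the tail of an order statistic of i.i.d.\ $D/G/1$ waiting times, together with a Chernoff bound for the light-tailed error $\eta$.

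First I would treat case (i), $k \geq N-d$. By Lemma~\ref{app lem: lemma 6.1}(i),
\begin{align*}
\mathbb{P}(W_{\mathrm{min},n} > x+y)
&\leq \mathbb{P}\!\left( \tfrac{k+1}{k+1-N+d}\, U_{n,(k+1)} + \eta > x+y \right) \\
&\leq \mathbb{P}\!\left( U_{n,(k+1)} > \tfrac{(k+1-N+d)\,x}{k+1} \right) + \mathbb{P}(\eta > y).
\end{align*}
Since the $N$ auxiliary $D/G/1$ queues are driven by independent job-size sequences $\{b_{n,i}\}_{n}$, the waiting times $U_{n,1},\dots,U_{n,N}$ are i.i.d.; moreover, by Lindley's recursion each is stochastically dominated by $M_{\mathrm{rw}}$. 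The event $\{U_{n,(k+1)} > t\}$ forces at least $N-k$ of the $U_{n,i}$ to exceed $t$, so a union bound over the $\binom{N}{N-k}$ possible index subsets yields
\begin{align*}
\mathbb{P}(U_{n,(k+1)} > t)
\leq \binom{N}{N-k}\mathbb{P}(U_{n,1} > t)^{N-k}
\leq \binom{N}{k}\,\bar{F}_{M_{\mathrm{rw}}}(t)^{N-k}.
\end{align*}
Setting $t = (k+1-N+d)x/(k+1)$ and applying the exponential Markov inequality $\mathbb{P}(\eta > y) \leq \mathbb{E}[e^{\beta\eta}]\,e^{-\beta y}$, which is justified by Lemma~\ref{app lem: lemma 6.1}, assembles the first claim with $\mathrm{const} = \mathbb{E}[e^{\beta \eta}]$.

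Case (ii), $k \leq N-d$, follows along the same lines from Lemma~\ref{app lem: lemma 6.1}(ii). The event $\{U_{n,(N-d+1)} > t\}$ requires at least $d$ of the $U_{n,i}$ to exceed $t$, so the union bound over the $\binom{N}{d}$ $d$-subsets gives $\mathbb{P}(U_{n,(N-d+1)} > t) \leq \binom{N}{d}\,\bar{F}_{M_{\mathrm{rw}}}(t)^{d}$; taking $t = x/(k+1)$ and again absorbing $\mathbb{P}(\eta > y)$ into a term $\mathrm{const}\cdot e^{-\beta y}$ delivers the bound.

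The only substantive input here is Lemma~\ref{app lem: lemma 6.1}; everything else is essentially bookkeeping. The two things I would verify carefully are that the auxiliary $D/G/1$ queues really are independent, so that $U_{n,1},\dots,U_{n,N}$ factorise inside the union bound, and that condition~\eqref{app eq: equation h upperbound} on $h$ ensures $\mathbb{E}[B] < (k+1)(a'-h)$, so that each auxiliary queue is stable and $M_{\mathrm{rw}}$ is a proper random variable with $\bar{F}_{M_{\mathrm{rw}}}$ a genuine tail probability. Passing from this deterministic-interarrival result to general interarrival times is then a separate step carried out via Lemma~\ref{app lem: lemma 1}, as in~\cite{FK-HTMS,FK-LDMSHV}.
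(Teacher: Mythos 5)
Your proposal follows the same route as the paper: it invokes Lemma~\ref{app lem: lemma 6.1} to split the event into the order-statistic tail and the light-tailed error $\eta$, uses independence of the auxiliary $D/G/1$ queues together with the order-statistic union bound (noting $\binom{N}{N-k}=\binom{N}{k}$), identifies $\bar F_{M_{\mathrm{rw}}}$ as the (steady-state) bound on $\mathbb{P}(U_{n1}>t)$, and closes with a Chernoff/Markov bound for $\eta$. The paper's proof does exactly this, and your two verification points (independence of the auxiliary queues and stability of each under condition~\eqref{app eq: equation h upperbound}) are precisely the ones that make the bookkeeping go through.
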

\begin{proof}
Similarly to~\cite[Theorem~7.1]{FK-LDMSHV}, this proof relies on Lemma~\ref{app lem: lemma 6.1} which upper bounds the waiting time of the $n$th job in the two cases $k \geq N-d$ and $k \leq N-d$. 

%In the proof the same majorant as in Lemma~\ref{app lem: upperbound tail waiting time} can be used, but now with 
Consider deterministic arrival times $T_{n} = n (k+1)(a'-h)$ where $h$ satisfies~\eqref{app eq: equation h upperbound}.
For $k \geq N-d$, by Lemma~\ref{app lem: lemma 6.1} i),
\begin{align*}
\mathbb{P}(W_{\mathrm{min},n} > x + y) &\leq \mathbb{P}\left(U_{n,(k+1)} > \frac{(k+1-N+d)x}{k+1}\right) + \mathbb{P}(\eta > y),
\end{align*}
and taking into account the independence of the auxiliary queueing systems, we obtain
\begin{align*}
\mathbb{P}(W_{\mathrm{min},n} > x + y) &\leq {N \choose k} \left( \mathbb{P}\left(U_{n1} > \frac{(k+1-N+d)x}{k+1}\right) \right)^{N-k} + \mathbb{P}(\eta > y).
\end{align*}
Similarly, for $k \leq N-d$, by Lemma~\ref{app lem: lemma 6.1} ii),
\begin{align*}
\mathbb{P}(W_{\mathrm{min},n} > x + y) &\leq \mathbb{P}\left(U_{n,(N-d+1)} > \frac{x}{k+1}\right) + \mathbb{P}(\eta > y),
\end{align*}
and taking into account the independence of the auxiliary queueing systems, we obtain
\begin{align*}
\mathbb{P}(W_{\mathrm{min},n} > x + y) &\leq {N \choose d} \left( \mathbb{P}\left(U_{n1} > \frac{x}{k+1}\right) \right)^{d} + \mathbb{P}(\eta > y).
\end{align*}
The proof is completed by observing (1) that $M_{\mathrm{rw}}$ has the distribution of the maximum of a random walk, which is also the distribution of the steady-state waiting time $U$ in any of the auxiliary $D/G/1$ queues, and (2) according to Lemma~\ref{app lem: lemma 6.1}, $\mathbb{P}(\eta > y)$ is exponentially bounded.  
\end{proof}

It is well known (cf.~\cite[Theorem 5.2]{FKZ-IHTSED}) that if the residual job size $B^{\mathrm{res}}$ is subexponential, then
\begin{align*}
\bar{F}_{M_{\mathrm{rw}}}(x) \sim \frac{\mathbb{E}[B]}{(k+1)(a'-h)-\mathbb{E}[B]} \bar{B}^{\mathrm{res}}(x) ~~~ \text{as } x \rightarrow \infty.
\end{align*}

Taking $h$ close to its minimal value, we arrive at the following estimate
\begin{align}
\label{app eq: behavior maximum random walk}
\bar{F}_{M_{\mathrm{rw}}}(x) \sim \left(\frac{(k+1) \tilde{\rho}}{(k+1)-\tilde{\rho} - \frac{(k+1)^{2} \epsilon}{a'}} \right) \bar{B}^{\mathrm{res}}(x) ~~~ \text{as } x \rightarrow \infty, 
\end{align}
where $\epsilon >0$.
We now have the ingredients to prove the upper bounds~\eqref{eq: thm upperbound waiting time cos fcfs low load} and~\eqref{eq: thm upperbound waiting time cos fcfs high load} in Theorem~\ref{thm: tail waiting time cos fcfs}. 
Replacing $x$ by $(1-\delta)x$ and $y$ by $\delta x$ in Theorem~\ref{app thm: upperbound tail waiting time} results in 
\begin{align*}
\mathbb{P}(W_{\mathrm{min}} > x) \leq {N \choose k} \left( \bar{F}_{M_{\mathrm{rw}}}\left(\frac{(k+1-N+d)x(1-\delta)}{k+1}\right) \right)^{N-k} + const \cdot e ^{-\beta \delta x},
\end{align*}
and
\begin{align*}
\mathbb{P}(W_{\mathrm{min}} > x) \leq {N \choose d} \left( \bar{F}_{M_{\mathrm{rw}}}\left(\frac{x(1-\delta)}{k+1} \right) \right)^{d} + const \cdot e ^{-\beta \delta x},
\end{align*}
for $k \geq N-d$ and $k \leq N-d$, respectively. Combined with Equation~\eqref{app eq: behavior maximum random walk} this yields the upper bound. 

So far we assumed deterministic interarrival times; the following lemma allows us to extend the proof to the case of generally distributed interarrival times. For clarity we highlight the metrics that correspond to the system with deterministic interarrival times by an apostrophe.

\begin{comment}
\begin{corollary}[Counterpart of Corollary 2.1 in~\cite{FK-LDMSHV}]
\label{app cor: corollary 2.1}
Let $\boldsymbol{D}_{n}'$ be the ordered workload vector that satisfies the recursion~\eqref{eq: kiefer wolfowitz jsw-d} and which belongs to a stable $N$-server queueing system with the same job sizes $b_{n}$ as in $\boldsymbol{D}_{n}$ and with the deterministic interarrival times $a'$. If $a' > \mathbb{E}[A]$, then, for any $\epsilon>0$, there exists $x_{0}$ such that
\begin{align*}
\mathbb{P}(W_{\mathrm{min}} > x) \geq (1-\epsilon) \mathbb{P}(W_{\mathrm{min}}' > x + x_{0}) ~~~ \text{for all } x. 
\end{align*}
One can take $x_{0}$ such that 
\begin{align*}
\mathbb{P}\left( \sup_{n \geq 0} \sum_{i=1}^{n} (b_{i}-a') \leq x_{0} \right) \geq 1 - \epsilon.
\end{align*}
\end{corollary}
\end{comment}

\begin{lemma}[Counterpart of Lemma~1 in~\cite{FK-HTMS}]
\label{app lem: lemma 1}
If $\mathbb{P}(W_{\mathrm{min}}' > x) \leq \bar{G}(x)$ for some long-tailed distribution $G$, where $W_{\mathrm{min}}'$ denotes the waiting time in the system with deterministic interarrival times $a' \equiv \mathbb{E}[A]$, then 
\begin{align*}
\limsup_{x \rightarrow \infty} \frac{\mathbb{P}(W_{\mathrm{min}} > x)}{\bar{G}(x)} \leq 1.
\end{align*}
\end{lemma}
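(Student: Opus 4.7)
The plan is to follow the proof strategy of Lemma~1 in~\cite{FK-HTMS}, making the minor adjustments required by the JSW-$d$ scheduling rule (rather than JSW). The central observation is that, for the waiting time tail, the interarrival distribution enters only through its mean, and the residual discrepancy between the generally distributed and deterministic systems can be absorbed by the insensitivity property of long-tailed distributions: there exists $h(x)\to\infty$ with $\bar{G}(x-h(x))\sim\bar{G}(x)$ as $x\to\infty$ (see~\cite{FKZ-IHTSED}).

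Fix $\epsilon>0$ small and introduce an auxiliary deterministic system with interarrival time $a^{*}:=a'+\epsilon$. By property~(1) of Lemma~4.1 in~\cite{FK-LDMSHV} (which extends verbatim to JSW-$d$, as noted at the beginning of this appendix), the waiting time $W^{*}_{\min,n}$ in this auxiliary system is dominated pathwise by $W'_{\min,n}$, so $\mathbb{P}(W^{*}_{\min}>x)\leq\bar{G}(x)$. The JSW-$d$ analogue of Lemma~2.1 in~\cite{FK-LDMSHV}, applied with random $a_{n}$ and deterministic $a^{*}$, then yields the pathwise estimate
\[
W_{\min,n}\ \leq\ W^{*}_{\min,n}+M_{n-1},\qquad M_{n}=(M_{n-1}+a^{*}-a_{n})^{+},\ M_{0}=0,
\]
and hence, for every $y>0$,
\[
\mathbb{P}(W_{\min,n}>x)\ \leq\ \mathbb{P}(W^{*}_{\min,n}>x-y)+\mathbb{P}(M_{n-1}>y).
\]
Setting $y=h(x)$ renders the first term $(1+o(1))\bar{G}(x)$ by the insensitivity property, and the conclusion would follow once the second term is shown to be $o(\bar{G}(x))$ uniformly in~$n$.

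The main obstacle is that $\mathbb{E}[a^{*}-a_{n}]=\epsilon>0$, so $M_{n}\to\infty$ almost surely as $n\to\infty$, and the required uniform bound on $\mathbb{P}(M_{n-1}>h(x))$ is simply false in steady state for fixed $\epsilon$. The resolution, borrowed from~\cite{FK-HTMS}, combines three ingredients: (i)~a Loynes-type coupling-from-the-past representation showing that the steady-state $W_{\min}$ is determined, up to a negligible event, by a random but almost-surely finite number $N^{\ast}$ of past arrivals; (ii)~truncation to a finite horizon $N(x)\to\infty$ chosen so that $\mathbb{P}(N^{\ast}>N(x))=o(\bar{G}(x))$ while also $\mathbb{E}[\sup_{n\leq N(x)}M_{n}]$ remains $o(h(x))$; and (iii)~a joint scaling $\epsilon=\epsilon(x)\downarrow 0$ chosen so that $\epsilon(x) N(x) = o(h(x))$, which ensures the reflected random walk has not yet picked up its stationary $+\infty$ behavior by time $N(x)$. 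Carrying out this scheme verbatim from~\cite{FK-HTMS}, with the JSW-specific ingredients replaced by the JSW-$d$ analogues developed in this appendix, yields $\limsup_{x\to\infty}\mathbb{P}(W_{\min}>x)/\bar{G}(x)\leq 1$, with the final $\epsilon\downarrow 0$ merely removing a vanishing slack in the leading constant.
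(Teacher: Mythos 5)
Your overall template — a pathwise comparison of the JSW-$d$ system with a deterministic-interarrival counterpart via a reflected random walk, followed by the long-tailed insensitivity $\bar{G}(x-h(x))\sim\bar{G}(x)$ — is the right one, but you introduce a slack $\epsilon$ in the wrong place and then spend the rest of the argument paying for it. The paper (and the FK-HTMS proof it cites) uses the deterministic system with interarrival time $a'\equiv\mathbb{E}[A]$ \emph{directly}: it sets $\xi_n=a'-a_n$, so $M_n=(\xi_n+M_{n-1})^+$ has mean-zero increments, and the whole content of the proof is the inductive verification that the ordered workload vector of the JSW-$d$ system with random interarrivals is coordinatewise dominated by that of the deterministic system shifted by $\mathbf{1}M_n$; the remainder is then inherited verbatim from Lemma~1 of FK-HTMS. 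Your choice $a^{*}=a'+\epsilon$ gains you nothing — the bound $\mathbb{P}(W^{*}_{\min}>x)\le\bar{G}(x)$ is already available at $\epsilon=0$ since the deterministic system in the hypothesis is exactly the $a'$ system — while turning the mean-zero comparison walk into a positive-drift one, $\mathbb{E}[a^{*}-a_n]=\epsilon>0$. You correctly diagnose that this breaks the naive stationary argument, but the cure you then sketch is both not what the reference does and not watertight.

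Concretely, two of your three rescue ingredients have gaps. In step (ii), controlling $\mathbb{E}[\sup_{n\le N(x)}M_n]=o(h(x))$ is not enough: what is actually needed is a \emph{tail} bound $\mathbb{P}\bigl(\sup_{n\le N(x)}M_n>h(x)\bigr)=o(\bar{G}(x))$, and Markov's inequality applied to the expectation only yields $o(1)$, not $o(\bar{G}(x))$ (recall $\bar{G}(x)\to 0$). In step (ii) again, the horizon $N^{\ast}$ you invoke — the number of past arrivals up to the last empty epoch in the backward Loynes construction — is a busy-period count and in the heavy-tailed regime is itself heavy-tailed, typically with tail index comparable to that of $\bar{G}$; the requirement $\mathbb{P}(N^{\ast}>N(x))=o(\bar{G}(x))$ then forces $N(x)$ to grow at least polynomially in $x$, which squeezes $\epsilon(x)$ hard and creates the dependence you gesture at in (iii) without verifying. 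Moreover $N^{\ast}$ and the comparison walk $\{M_n\}$ are both driven by the interarrival sequence, so they are not independent, and the ``conditioned on $N^{\ast}\le N(x)$'' decomposition needs more care than a simple union bound. By contrast, the paper's argument never meets these complications: it proves only the pathwise induction $W_{\min,n}\le W'_{\min,n}+M_n$ with $a'=\mathbb{E}[A]$ and hands the rest to FK-HTMS, where the zero-drift case is handled once and for all. So the gap in your proposal is genuine: both the sign of the slack and the execution of the truncation argument would need to be repaired before this could stand as a complete proof.
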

\begin{proof} 
Denote $\xi_{n} = a' - a_{n}$. Put $M_{0}=0$ and, for $n \geq 1$,
\begin{align*}
M_{n} &= \max\{0,\xi_{n},\xi_{n}+\xi_{n-1},\dots,\xi_{n}+\dots+\xi_{1}\} = (\xi_{n} + M_{n-1})^{+}.
\end{align*}
Similar to Lemma~1 in~\cite{FK-HTMS} we use induction to prove the inequality
\begin{align}
\label{eq: induction waiting time}
W_{\mathrm{min},n} \leq W_{\mathrm{min},n}' + i M_{n} ~~~\text{a.s.}
\end{align}
For $n=1$ we have $0 \leq 0 + i M_{1}$. Assume the inequality is proved for some $n$, then
\begin{align*}
W_{\mathrm{min},n+1} &= R(W_{\mathrm{min},n} + e_{i_{n}}b_{n} - i a_{n+1})^{+} \\
& \leq R(W_{\mathrm{min},n}' + i M_{n} + e_{i_{n}}b_{n} - i a_{n+1})^{+} \\
& = R(W_{\mathrm{min},n}' + e_{i_{n}}b_{n} -ia' + i(M_{n} + \xi_{n+1}))^{+}.
\end{align*}
Since $(u+v)^{+} \leq u^{+} + v^{+}$,
\begin{align*}
W_{\mathrm{min},n+1} \leq  R(W_{\mathrm{min},n}' + e_{i_{n}}b_{n} -ia')^{+} + i(M_{n} + \xi_{n+1})^{+} \equiv W_{\mathrm{min},n+1}' + iM_{n+1},
\end{align*}
and the proof of~\eqref{eq: induction waiting time} is complete. The remainder of the proof follows by the exact same steps as in Lemma~1 in~\cite{FK-HTMS}. 
\end{proof}
\end{document}